\theoremstyle{definition}
\newtheorem* {theorem*}{Theorem}
\newtheorem* {conjecture*}{Conjecture}
\newtheorem{theorem}{Theorem}[section]
\theoremstyle{definition}
\newtheorem* {example*}{Example}
\newtheorem{lemma}[theorem]{Lemma}
\theoremstyle{definition}
\newtheorem{definition}[theorem]{Definition}
\theoremstyle{definition}
\newtheorem{proposition}[theorem]{Proposition}
\newtheorem{corollary}[theorem]{Corollary}
\newtheorem{remark}[theorem]{Remark}
\theoremstyle{definition}
\newtheorem {example}[theorem]{Example}
\theoremstyle{definition}
\theoremstyle{definition}
\theoremstyle{definition}
\theoremstyle{definition}
\newcommand{\ytabd}[1]{
\ytableausetup{boxsize = .8cm,aligntableaux=center}
{\small\begin{ytableau}  #1  \end{ytableau}}
}
\newcommand{\ytabb}[1]{
\ytableausetup{boxsize = .55cm,aligntableaux=center}
{\small\begin{ytableau}  #1  \end{ytableau}}
}
\newcommand{\ytab}[1]{
\ytableausetup{boxsize = .4cm,aligntableaux=center}
{\small\begin{ytableau}  #1  \end{ytableau}}
}
\def\({\left(}
\def\){\right)}
\newcommand{\FF}{\mathbb{F}}
\newcommand{\cC}{\mathcal{C}}
\newcommand{\cD}{\mathcal{D}}
\def\cX{\mathcal{X}}
\def\cW{\mathcal{W}}
\def\NN{\mathbb{N}}
\def\ZZ{\mathbb{Z}}
\def\ch{\mathrm{ch}}
\newcommand{\g}{\mathfrak{g}}
\def\barr{\begin{array}}
\def\earr{\end{array}}
\def\ba{\begin{aligned}}
\def\ea{\end{aligned}}
\def\be{\begin{equation}}
\def\ee{\end{equation}}
\def\quand{\quad\text{and}\quad}
\def\quord{\quad\text{or}\quad}
\newcommand{\gl}{\mathfrak{gl}}
\def\hs{\hspace{0.5mm}}
\def\id{\mathrm{id}}
\def\PP{\mathbb{P}}
\def\ben{\begin{enumerate}}
\def\een{\end{enumerate}}
\def\bei{\begin{itemize}}
\def\eei{\end{itemize}}
\def\cT{\mathscr{T}}
\def\hs{\hspace{0.5mm}}
\def\e{\textbf{e}}
\newcommand{\xRightarrow}[2][]{\ext@arrow 0359\Rightarrowfill@{#1}{#2}}
\newcommand{\rank}{\operatorname{rank}}
\renewcommand{\r}[1]{\textcolor{red}{#1}}
\newcommand{\cB}{\mathcal{B}}
\def\arcstart{\ \xy<0cm,-.06cm>\xymatrix@R=.1cm@C=.10cm }
\newcommand{\arcstartc}[1]{\ \xy<0cm,-.15cm>\xymatrix@R=.1cm@C=#1cm}
\def\r{\mathbf{r}}
\def\bar{\overline}
\definecolor{darkred}{rgb}{0.7,0,0} 
\newcommand{\defn}[1]{{\color{darkred}\emph{#1}}} 
\newcommand{\weight}{\mathsf{wt}}
\def\BB{\mathbb{B}}
\def\q{\mathfrak{q}}
\def\qq{\mathfrak{q}^+}
\def\DTab{\textsf{DecTab}}
\def\DDTab{\textsf{DecTab}^+}
\def\ShTab{\textsf{ShTab}}
\def\SShTab{\textsf{ShTab}^+}
\def\row{\mathsf{row}}
\def\revrow{\mathsf{revrow}}
\def\toQ{\xrightarrow{\hs\mathfrak{q}\hs}}
\def\toQQ{\xrightarrow{\hs\mathsf{dec}\hs}}
\def\Pqq{P_{\mathsf{dec}}}
\def\Qqq{Q_{\mathsf{dec}}}
\def\SD{\mathsf{SD}}
\def\unprime{\mathsf{unprime}}
\def\Thighest{T^{\mathsf{highest}}}
\def\Tlowest{T^{\mathsf{lowest}}}
\def\TTlowest{  \hat T^{\mathsf{lowest}}}
\def\height{\mathrm{height}}
\def\primes{\mathsf{primes}} 
\def\PHM{P_{\textsf{mix}}}
\def\cT{\mathcal{T}}
\def\simdec{\overset{\mathrm{dec}}\sim}
\def\r{\mathsf{reverse}}
\numberwithin{equation}{section}
\begin{document}
\title{Primed decomposition tableaux and extended queer crystals}
\author{
Eric MARBERG\thanks{
Department of Mathematics, HKUST, {\tt emarberg@ust.hk}.
}
\and
Kam Hung TONG\thanks{
Department of Mathematics, HKUST, {\tt khtongad@connect.ust.hk}.
}
}

\date{}

\maketitle

\begin{abstract}
Our previous work introduced a category of extended queer crystals, whose connected normal objects  have unique highest weight elements and   characters that are Schur $Q$-polynomials. The initial models for such crystals were based on semistandard shifted tableaux. Here, we introduce a simpler construction using certain ``primed'' decomposition tableaux, which slightly generalize the decomposition tableaux used in work of Grantcharov et al. This leads to a new, shorter proof of the highest weight properties of the normal subcategory of extended queer crystals. 
Along the way, we analyze a primed extension of Grantcharov et al.'s insertion scheme for decomposition tableaux.
\end{abstract}

\tableofcontents

\section{Introduction}

\defn{Crystals} are an abstraction for the crystal bases of quantum group representations, and can be viewed as acyclic directed graphs with labeled edges and weighted vertices, satisfying certain axioms. Crystals for $\gl_n$ and other classical Lie algebras were first studied by Kashiwara \cite{Kashiwara1990,Kashiwara1991} and Lusztig \cite{Lusztig1990a,Lusztig1990b} in the 1990s.
More recent work by Grantcharov et al. \cite{GJKKK15,GJKKK,MR2628823} introduced
crystals for the queer Lie superalgebra $\q_n$. 

Our previous article \cite{MT2023} defined a modified category of $\qq_n$-crystals, which share many nice features with $\gl_n$-crystals and $\q_n$-crystals. For example,
$\qq_n$-crystals have a natural tensor product and 
a standard crystal corresponding to the vector representation of the quantum group $U_q(\q_n)$. 
This lets one define a subcategory of \defn{normal crystals}, consisting of crystals whose connected components can each be embedded in some tensor power of the standard crystal.

In \cite{MT2023}, our primary models for normal $\qq_n$-crystals were derived from \defn{semistandard shifted tableaux}, using fairly technical crystal operators introduced in \cite{AssafOguz,HPS,Hiroshima2018}.
One of the main results of this paper is to introduce a much simpler model for
normal $\qq_n$-crystal based on a ``primed'' generalization of \defn{decomposition tableaux}.
The latter tableaux served as the original model for normal (non-extended) $\q_n$-crystals in \cite{GJKKK}.

After formally defining primed decomposition tableaux, we equip them with a family of $\qq_n$-crystal operators, identify their highest weight elements, and construct a primed generalization of a useful ``insertion scheme'' from \cite{GJKKK}, which we refer to as \defn{decomposition insertion}.
As an application, we give a short, alternate proof that normal $\qq_n$-crystals are determined up to isomorphism by their characters (which range over all Schur $Q$-positive symmetric polynomials in $n$ variables), and also by their multisets of highest weights (which range over all strict partitions with at most $n$ parts).
These results are explained in Section~\ref{normal-sect}.

We also derive a simpler, alternate characterization of highest and lowest weight elements for normal $\q_n$ and $\qq_n$-crystals (see Proposition~\ref{ranked-prop}).
 As another application, we identify in Section~\ref{comp-sect}
 the equivalence relation on primed words whose classes share the same
 decomposition insertion tableau.

\subsection*{Acknowledgments}

This work was supported by Hong Kong RGC grants 16306120 and 16304122. 
We thank Joel Lewis and Travis Scrimshaw for useful discussions.

\section{Preliminaries}

Let $\NN =\{0,1,2,\dots\}$ and $\PP=\{1,2,3,\dots\}$.
Fix $ n \in \NN$ and let $[n] = \{1,2,\dots,n\}$.
Throughout, let $\e_1,\e_2,\dots,\e_n\in\ZZ^n$ be the standard basis.

\subsection{Shifted tableaux}

Assume $\lambda = (\lambda_1 > \lambda_2 > \dots > 0)$ is a \defn{strict} partition. Let $\ell(\lambda)$ be the number of nonzero parts of $\lambda$.
The \defn{shifted diagram} of $\lambda$ is 
the set  
\[\SD_\lambda:= \{ (i,i+j-1) : i\in  [\ell(\lambda)] \text{ and } j \in [\lambda_i]\}.\]
A \defn{shifted tableau} of shape $\lambda$  is a map $ \SD_\lambda \to \{1'<1<2'<2<\dots\}$.

If $T$ is a shifted tableau, then we  write $(i,j) \in T$ to indicate that $(i,j)$ belong to the domain of $T$
and we let $T_{ij}$ denote the value assigned to this position.
We draw tableaux in French notation, so that row indices increase from bottom to top and column indices increase from left to right.
Both
\be\label{tableau-ex}
S = \ytab{  \none& 3 & 5& 7 \\ 1 & 2 & 4 & 6}
\quand 
T = \ytab{ \none & 2' & 2 & 4' \\ 1' & 1 & 1 & 4'}  
\ee
are shifted tableaux of shape $\lambda=(4,3)$
with $S_{23} = 5$ and $T_{23} = 2$. The \defn{(main) diagonal} of a shifted tableau
is the set of boxes $(i,j)$ in its domain with $i=j$.

A shifted tableau is \defn{semistandard} if its  rows and columns are weakly increasing,
such that no primed number appears more than once in any row and no unprimed number appears more than one in any column.
The examples 
in \eqref{tableau-ex}
are both semistandard.
We write  
  $\SShTab(\lambda)$
 for the set of all semistandard shifted tableaux of shape $\lambda$,
 and $ \ShTab(\lambda)$ for the subset of elements in $ \SShTab(\lambda)$
with no primed entries on the diagonal.
   Define  $ \ShTab_n(\lambda)\subseteq \ShTab(\lambda)$ and $ \SShTab_n(\lambda)\subseteq \SShTab(\lambda)$ to be the subsets of shifted tableaux with all entries at most $n$.

Our main references below are \cite[Chapter III, \S8]{Macdonald} and \cite[\S3.3]{IkedaNaruse}.
If $T$ is a shifted tableau, then   set
$ x^T := x_1^{a_1} x_2^{a_2} \cdots x_n^{a_n}$ where $a_k$ is the number of times $k$ or $k'$ appears in $T$.
The \defn{Schur $P$- and $Q$-functions}  of a strict partition $\lambda$ are  
\be\label{PQ-eq}
P_\lambda := \sum_{T \in \ShTab(\lambda)} x^T
\quand
Q_\lambda := \sum_{T \in \SShTab(\lambda)} x^T = 2^{\ell(\lambda)} P_\lambda .\ee
These power series are both of bounded degree and symmetric in the $x_i$ variables. 

We write $P_\lambda(x_1,x_2,\dots,x_n)$ and $Q_\lambda(x_1,x_2,\dots,x_n)$ for the   polynomials obtained by specializing $P_\lambda$ and $Q_\lambda$ to $n$ variables,
or equivalently by taking the finite sums 
$\sum_{T \in \ShTab_n(\lambda)} x^T$
and
$ \sum_{T \in \SShTab_n(\lambda)} x^T$. 
As $\lambda$ varies over all strict partitions with $\ell(\lambda)\leq n$,
these polynomials 
are linearly independent over $\ZZ$.

\subsection{Abstract crystals}

Let $\cB$ be a set with maps $\weight  :  \cB\to \ZZ^n$
and 
$e_i,f_i  :  \cB \to \cB \sqcup \{0\}$  for $i \in [n-1]$,
where $0 \notin \cB$ is an auxiliary element.

\begin{definition}
\label{crystal-def}
The set $\cB$ is a
 \defn{$\gl_n$-crystal}
if 
for all $i \in [n-1]$ and $b,c \in \cB$
 it holds that  $e_i(b) = c $ if and only if $ f_i(c) = b$, 
in which case $ \weight(c) = \weight(b) + \e_{i} -\e_{i+1}$.
\end{definition}

Assume $\cB$ is a $\gl_n$-crystal. Then the maps $e_i$ and $f_i$ encode a directed graph with vertex set $\cB$, to be called the \defn{crystal graph},
with an edge $b \xrightarrow{i} c$ if and only if $ f_i(b)=c$.
Define the \defn{string lengths} $\varepsilon_i, \varphi_i  :  \cB \to \{0,1,2,\dots\}\sqcup \{\infty\}$ 
by \be\label{string-eqs}
\varepsilon_i(b) := \sup\left\{ k\geq 0 : e_i^k(b) \neq 0\right\}
\text{ and }
\varphi_i(b) := \sup\left\{ k \geq 0: f_i^k(b) \neq 0\right\}.
\ee

\begin{definition} The $\gl_n$-crystal $\cB$ is \defn{seminormal}
if the string lengths take only finite values with $\varphi_i(b) - \varepsilon_i(b) = \weight(b)_i - \weight(b)_{i+1}$ for all $i\in[n-1]$ and $b \in \cB$.
\end{definition}

If $\cB$ is finite then its \defn{character} is the Laurent polynomial 
\be\textstyle \ch(\cB) := \sum_{b \in \cB} x^{\weight(b)}\quad\text{where }x^{\weight(b)} := \prod_{i\in[n]} x_i^{\weight(b)_i}.\ee
The character is symmetric in $x_1,x_2,\dots,x_n$ if $\cB$ is seminormal \cite[\S2.6]{BumpSchilling}.

We refer to $\weight$ as the \defn{weight map}, to each $e_i$ as a \defn{raising operator}, and to each $f_i$ as a \defn{lowering operator}.
Each connected component of the crystal graph of $\cB$ may be viewed as a $\gl_n$-crystal
by restricting the weight map and crystal operators; these objects are called \defn{full subcrystals}.

\begin{example}\label{st-ex1}
The \defn{standard $\gl_n$-crystal} $\BB_n = \left\{ \boxed{i}: i \in [n]\right\}$ has
crystal graph
\[
    \begin{tikzpicture}[xscale=1.8, yscale=1,>=latex,baseline=(z.base)]
    \node at (0,0.0) (z) {};
      \node at (0,0) (T0) {$\boxed{1}$};
      \node at (1,0) (T1) {$\boxed{2}$};
      \node at (2,0) (T2) {$\boxed{3}$};
      \node at (3,0) (T3) {${\cdots}$};
      \node at (4,0) (T4) {$\boxed{n}$};
      \draw[->,thick]  (T0) -- (T1) node[midway,above,scale=0.75] {$1$};
      \draw[->,thick]  (T1) -- (T2) node[midway,above,scale=0.75] {$2$};
      \draw[->,thick]  (T2) -- (T3) node[midway,above,scale=0.75] {$3$};
      \draw[->,thick]  (T3) -- (T4) node[midway,above,scale=0.75] {$n-1$};
     \end{tikzpicture}
\quad\text{with }\weight(\boxed{i}):=\e_i.
\]
  \end{example}
  
  If $\cB$ and $\cC$ are $\gl$-crystals
then the set   $\cB \otimes \cC := \{ b\otimes c : b\in \cB,\ c\in \cC\}$
 of formal tensors has a unique $\gl_n$-crystal structure
 (which is seminormal if $\cB$ and $\cC$ are seminormal)
in which
$
\weight(b\otimes c) := \weight(b) + \weight(c)
$
and  
\be
e_i(b\otimes c) := \begin{cases}
b \otimes e_i(c) &\text{if }\varepsilon_i(b) \leq \varphi_i(c) \\
e_i(b) \otimes c &\text{if }\varepsilon_i(b) > \varphi_i(c)
\end{cases}
\ee
and
\be
f_i(b\otimes c) := \begin{cases}
b \otimes f_i(c) &\text{if }\varepsilon_i(b) < \varphi_i(c) \\
f_i(b) \otimes c &\text{if }\varepsilon_i(b) \geq \varphi_i(c)
\end{cases}
\ee
for $i \in [n-1]$,
where we set $b\otimes 0 = 0\otimes c = 0$  \cite[\S2.3]{BumpSchilling}.
This follows the ``anti-Kashiwara convention,''
which reverses the tensor product order  in \cite{GJKKK15,GJKKK}.
The natural map $\cB \otimes (\cC \otimes \cD) \to (\cB \otimes \cC) \otimes \cD$ is a crystal isomorphism,
 so we can dispense with   parentheses in iterated tensor products.

\subsection{Queer crystals}

The general linear Lie algebra $\gl_n$ has two super-analogues, one of which is the \defn{queer Lie superalgebra} $\q_n$. Grantcharov et al. developed a theory of crystals for $\q_n$ in \cite{GJKKK15,GJKKK,MR2628823}, which we review here. Assume $n\geq 2$.
  
Let $\cB$ be a $\gl_n$-crystal with maps $e_{\bar 1},f_{\bar 1} : \cB \to \cB \sqcup \{0\}$.
Define $\varepsilon_{ \bar 1}, \varphi_{ \bar 1} : \cB \to \NN\sqcup \{\infty\}$ 
as in  \eqref{string-eqs}   with $i=\bar 1$.
Below, we say that one map $\phi : \cB \to \cB\sqcup \{0\}$ \defn{preserves} another map $\eta : \cB \to \cX$
if $\eta(\phi(b)) = \eta(b)$ whenever $\phi(b) \neq 0$.

\begin{definition}
\label{q-crystal-def}
The $\gl_n$-crystal $\cB$ is a
 \defn{$\q_n$-crystal} 
  if both of the following hold:
\ben
\item[(a)]  $e_{\bar 1}$, $f_{\bar 1}$ commute with $e_i$, $f_i$ while preserving $\varepsilon_i$, $\varphi_i$ for all $3\leq i \leq n-1$;
\item[(b)] if $b,c \in \cB$ then
$e_{\bar 1}(b) = c$ if and only if $f_{\bar 1}(c) = b,$
in which case
 \[ \weight(c) = \weight(b) + \e_{1} -\e_{2}.\]
 \een
\end{definition}

Assume $\cB$ is a $\q_n$-crystal. The corresponding \defn{$\q_n$-crystal graph}  has vertex set $\cB$ and edges $b \xrightarrow{i} c$ whenever $f_i(b) =c$ for any $i \in \{\bar 1,1,2,\dots,n-1\}$.

\begin{definition} 
A $\q_n$-crystal $\cB$ is \defn{seminormal}
if it is seminormal as a $\gl_n$-crystal and for all $b \in \cB$ one has both $\weight(b)\in \NN^n$ and 
\[
\varphi_{\bar 1}(b) + \varepsilon_{\bar 1}(b)=\begin{cases} 0 &\text{if }\weight(b)_1 =\weight(b)_{2}= 0 \\ 1&\text{otherwise}.\end{cases}\]
\end{definition}

If $\cB$ is a finite seminormal $\q_n$-crystal then $\ch(\cB)$ is a $\ZZ$-linear combination of Schur $P$-polynomials $P_\lambda(x_1,x_2,\dots,x_n)$
by \cite[Prop. 2.5]{Marberg2019b}.

\begin{example}\label{st-ex2}
The \defn{standard $\q_n$-crystal} $\BB_n = \left\{ \boxed{i}: i \in [n]\right\}$ has
crystal graph
\[
    \begin{tikzpicture}[xscale=1.8, yscale=1,>=latex,baseline=(z.base)]
    \node at (0,0.0) (z) {};
      \node at (0,0) (T0) {$\boxed{1}$};
      \node at (1,0) (T1) {$\boxed{2}$};
      \node at (2,0) (T2) {$\boxed{3}$};
      \node at (3,0) (T3) {${\cdots}$};
      \node at (4,0) (T4) {$\boxed{n}$};
      \draw[->,darkred,thick]  (T0.15) -- (T1.165) node[midway,above,scale=0.75] {$ \overline1$};
      \draw[->,thick]  (T0) -- (T1) node[midway,below,scale=0.75] {$ 1$};
      \draw[->,thick]  (T1) -- (T2) node[midway,below,scale=0.75] {$ 2$};
      \draw[->,thick]  (T2) -- (T3) node[midway,below,scale=0.75] {$ 3$};
      \draw[->,thick]  (T3) -- (T4) node[midway,below,scale=0.75] {$ {n-1}$};
     \end{tikzpicture}
  \quad\text{with }\weight(\boxed{i}):=\e_i.
  \]
\end{example}

Suppose $\cB$ and $\cC$ are $\q_n$-crystals.
The set 
$\cB \otimes \cC$ already has a $\gl_n$-crystal structure. 
There is a unique way of viewing this object as a $\q_n$-crystal
with
\be\label{q-o-1}
  e_{\overline 1}(b\otimes c) := \begin{cases} 
 b \otimes e_{\overline 1}(c)&\text{if }e_{\overline1}(b)=f_{\overline1}(b)=0
 \\
  e_{\overline 1}(b) \otimes c
&\text{otherwise}
 \end{cases}
\ee
and
\be\label{q-o-2}
  f_{\overline 1}(b\otimes c) := \begin{cases} 
 b \otimes f_{\overline 1}(c)&\text{if }e_{\overline1}(b)=f_{\overline1}(b)=0
 \\
  f_{\overline 1}(b) \otimes c
&\text{otherwise}
 \end{cases}
\ee
where it is again understood that $b\otimes 0 = 0\otimes c = 0$ \cite[Thm.~1.8]{GJKKK}.
As in the $\gl_n$-case, 
the natural map $\cB \otimes (\cC \otimes \cD) \to (\cB \otimes \cC) \otimes \cD$ is a crystal isomorphism,
and if $\cB$ and $\cC$ are seminormal then so is $\cB\otimes \cC$.

\subsection{Extended crystals}

We continue to assume $n\geq 2$. The following theory of \defn{extended $\q_n$-crystals}
(abbreviated as \defn{$\qq_n$-crystals} from now on) was introduced in our previous work \cite{MT2023}.
Suppose $\cB$ is a $\q_n$-crystal with additional maps $e_{0},f_{0}: \cB \to \cB\sqcup\{0\}$.
Define   $\varepsilon_{0},\varphi_{0} : \cB \to \NN\sqcup\{\infty\}$
by the formula \eqref{string-eqs} with $i=0$.

\begin{definition}
\label{qq-crystal-def}
The $\q_n$-crystal $\cB$ is a
 \defn{$\qq_n$-crystal} 
  if  the following all hold: \ben
  \item[(a)] 
  the operators $e_{0}$ and $f_{0}$ commute with $e_i$ and $f_i$ for   $2\leq i \leq n-1$
  while preserving both  $\weight$ and the string lengths
      $\varepsilon_i$ and $\varphi_i$ for all $i\neq 0$;

\item[(b)] if $b,c \in \cB$ then
$e_{0}(b) = c $ if and only if $ f_{0}(c) = b$; and 

      \item[(c)] if $b \in \cB$ then $\varepsilon_0(b) + \varphi_0(b) \leq 1$,
      with $\varepsilon_0(b) + \varphi_0(b) =0$ if $\varepsilon_{\overline1}(b) + \varphi_{\overline1}(b) =0$.
\een
\end{definition}

Assume $\cB$ is a $\qq_n$-crystal. We have $\weight(e_0(b)) = \weight(b)$ for $b \in \cB$ with $e_0(b)\neq 0$ since $e_0$ preserves the weight map.
However, it always holds that $e_0(b) \neq b$ and $f_0(b) \neq b$ 
since $\varepsilon_0(b) + \varphi_0(b) \leq 1$.
The   \defn{$\qq_n$-crystal graph} of $\cB$ has vertex set $\cB$ and edges $b \xrightarrow{i} c$ whenever $f_i(b) =c$ for any $i \in \{\bar 1,0,1,2,\dots,n-1\}$.

\begin{definition}
The $\qq_n$-crystal $\cB$ is \defn{seminormal} if it is seminormal as a $\q_n$-crystal 
and for all $b \in \cB$ it holds that $\varphi_{0}(b) + \varepsilon_{0}(b)=\begin{cases} 
0&\text{if $\weight(b)_1=0$} \\
1&\text{if $\weight(b)_1>0$}.\end{cases}$
\end{definition}

If $\cB$ is a finite seminormal $\qq_n$-crystal then $\ch(B)$ is a $\ZZ$-linear combination of Schur $Q$-polynomials $Q_\lambda(x_1,x_2,\dots,x_n)$
by \cite[Prop. 3.13]{MT2023}.

\begin{example}\label{st-ex3}
The \defn{standard $\qq_n$-crystal} $\BB^+_n$ has
crystal graph
\[
      \begin{tikzpicture}[xscale=1.6, yscale=1,>=latex,baseline=(z.base)]
    \node at (0,0.7) (z) {};
      \node at (0,0) (T0) {$\boxed{1'}$};
      \node at (1,0) (T1) {$\boxed{2'}$};
      \node at (2,0) (T2) {$\boxed{3'}$};
      \node at (3,0) (T3) {${\cdots}$};
      \node at (4,0) (T4) {$\boxed{n'}$};
      \node at (0,1.4) (U0) {$\boxed{1}$};
      \node at (1,1.4) (U1) {$\boxed{2}$};
      \node at (2,1.4) (U2) {$\boxed{3}$};
      \node at (3,1.4) (U3) {${\cdots}$};
      \node at (4,1.4) (U4) {$\boxed{n}$};
      \draw[->,thick,dashed,color=darkred]  (T0.15) -- (T1.165) node[midway,above,scale=0.75] {$\overline 1$};
      \draw[->,thick]  (T0.345) -- (T1.195) node[midway,below,scale=0.75] {$1$};
      \draw[->,thick]  (T1) -- (T2) node[midway,above,scale=0.75] {$2$};
      \draw[->,thick]  (T2) -- (T3) node[midway,above,scale=0.75] {$3$};
      \draw[->,thick]  (T3) -- (T4) node[midway,above,scale=0.75] {$n-1$};
      \draw[->,thick,dashed,color=darkred]  (U0.15) -- (U1.165) node[midway,above,scale=0.75] {$\overline 1$};
      \draw[->,thick]  (U0.345) -- (U1.195) node[midway,below,scale=0.75] {$1$};
      \draw[->,thick]  (U1) -- (U2) node[midway,above,scale=0.75] {$2$};
      \draw[->,thick]  (U2) -- (U3) node[midway,above,scale=0.75] {$3$};
      \draw[->,thick]  (U3) -- (U4) node[midway,above,scale=0.75] {$n-1$};
      \draw[->,thick,dotted,color=blue]  (U0) -- (T0) node[midway,left,scale=0.75] {$0$};
     \end{tikzpicture}
  \text{ with }  \weight(\boxed{i})=\weight(\boxed{i'}):=\e_i.
 \]
\end{example}

Suppose $\cB$ and $\cC$ are $\qq_n$-crystals.
The $\gl_n$-crystal 
$\cB \otimes \cC$ has a unique $\qq_n$-crystal structure
(which is seminormal if $\cB$ and $\cC$ are seminormal)
with
\be
 e_{0}(b\otimes c) := \begin{cases} 
 b \otimes e_0(c) & \text{if $e_{0}(b)=f_{0}(b)=0$}
 \\
 e_0(b) \otimes c&\text{otherwise}
 \end{cases}
\ee
and
\be
  f_{0}(b\otimes c) := \begin{cases} 
 b \otimes f_0(c) & \text{if $e_{0}(b)=f_{0}(b)=0$}
 \\
 f_0(b) \otimes c&\text{otherwise}
 \end{cases}
\ee
along with
\be\label{eb1-eq}
  e_{\overline 1}(b\otimes c) := \begin{cases} 
 b \otimes e_{\overline 1}(c)
 &\text{if $e_{\overline1}(b)=f_{\overline1}(b)=0$}
 \\[-12pt]
 \\
 f_0  e_{\bar 1} (b) \otimes e_0(c) 
&\text{if $f_0e_{\overline1}(b)\neq0 \neq e_0(c)$} \\ &\text{and  $e_0 (b)=f_0(b)=0$}
\\[-12pt]
\\
 e_0  e_{\bar 1} (b) \otimes f_0(c) 
&\text{if $e_0e_{\overline1}(b)\neq0 \neq f_0(c)$} \\ &\text{and $ e_0 (b)=f_0(b)=0$}
\\[-12pt]
 \\
  e_{\overline 1}(b) \otimes c
&\text{otherwise}
 \end{cases}
\ee
and
\be\label{fb1-eq}
 f_{\overline 1}(b\otimes c) := \begin{cases} 
 b \otimes f_{\overline 1}(c)&\text{if $e_{\overline1}(b)=f_{\overline1}(b)=0$} 
 \\[-12pt]
 \\
  f_{\overline 1}e_0(b) \otimes f_0(c)
 &\text{if $f_{\overline1}e_0(b)\neq0 \neq f_0(c)$} \\ &\text{and $e_0f_{\overline1}e_0(b)=f_0f_{\overline1}e_0(b)=0$}
\\[-12pt]
\\
  f_{\overline 1}f_0(b) \otimes e_0(c)
 &\text{if $f_{\overline1}f_0(b)\neq0 \neq e_0(c)$} \\ &\text{and $e_0f_{\overline1}f_0(b)=f_0f_{\overline1}f_0(b)=0$}
\\[-12pt]
\\
    f_{\overline 1}(b) \otimes c &\text{otherwise}
 \end{cases}
 \ee
where again one sets $b\otimes 0 = 0\otimes c = 0$ \cite[Thm.~3.14]{MT2023}.



\begin{remark}
When $\cB$ and $\cC$ are seminormal $\qq_n$-crystals, 
the definitions  $e_{\bar 1} $ and $f_{\bar 1}$ just given simplify
to the following formulas from
\cite[Thm.~3.14]{MT2023}:
\[
  e_{\overline 1}(b\otimes c) = \begin{cases} 
 b \otimes e_{\overline 1}(c)
 &\text{if $\weight(b)_1 = \weight(b)_2 = 0$}
 \\
 f_0  e_{\bar 1} (b) \otimes e_0(c) 
 &\text{if $\weight(b)_1 = 0$ and $f_0e_{\bar 1}(b) \neq 0\neq e_0(c)$}
\\
 e_0  e_{\bar 1} (b) \otimes f_0(c) 
 &\text{if $\weight(b)_1 = 0$ and $e_0e_{\bar 1}(b)\neq 0\neq f_0(c) $}
 \\
  e_{\overline 1}(b) \otimes c
&\text{otherwise}
 \end{cases}
\]
and
\[
 f_{\overline 1}(b\otimes c) = \begin{cases} 
 b \otimes f_{\overline 1}(c)&\text{if $\weight(b)_1 = \weight(b)_2 = 0$}
 \\
  f_{\overline 1}f_0(b) \otimes e_0(c)
 &\text{if $\weight(b)_1 = 1$ and $f_{\overline 1}f_0(b)\neq 0 \neq e_0(c)$}
\\
  f_{\overline 1}e_0(b) \otimes f_0(c)
 &\text{if $\weight(b)_1 = 1$ and $f_{\overline 1}e_0(b) \neq 0\neq f_0(c)$}
  \\
    f_{\overline 1}(b) \otimes c &\text{otherwise}.
 \end{cases}
 \]
We also mention
that how one evaluates $e_{\bar 1}(b\otimes c)$ and $f_{\bar 1}(b\otimes c)$,
depends on whether $\cB$ and $\cC$ are viewed as
$\q_n$- or $\qq_n$-crystals,
since the formulas \eqref{eb1-eq} and \eqref{fb1-eq} do not agree with \eqref{q-o-1} and \eqref{q-o-2}.
 For this reason, using ``$\otimes$'' for both the $\q_n$- and $\qq_n$-crystal tensor product
is ambiguous.
However, we expect that this convention will not cause much confusion in practice.
 \end{remark}

 We include a proof of the following result for completeness, since it was only shown
in the seminormal case in \cite{MT2023}.

\begin{proposition}
If $\cB$, $\cC$, and $\cD$ are $\qq_n$-crystals then the bijection
$(\cB\otimes\cC) \otimes \cD \to \cB\otimes(\cC\otimes \cD)$ 
given by $(b\otimes c)\otimes d \mapsto b\otimes(c\otimes d)$
is a $\qq_n$-crystal isomorphism.
\end{proposition}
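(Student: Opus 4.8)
The plan is to check that the bijection $\Phi\colon(\cB\otimes\cC)\otimes\cD\to\cB\otimes(\cC\otimes\cD)$ given by $(b\otimes c)\otimes d\mapsto b\otimes(c\otimes d)$ intertwines every structure map. Since $\weight$ is additive across tensor factors, $\Phi$ preserves $\weight$ with no work. The excerpt already records that the underlying $\gl_n$-crystal tensor product is associative, so $\Phi$ automatically commutes with $e_i$ and $f_i$ for all $i\in[n-1]$. Thus the proposition reduces to showing that $\Phi$ commutes with the four remaining operators $e_0,f_0,e_{\bar1},f_{\bar1}$.

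For $e_0$ and $f_0$ this is formally the problem already solved for the $\q_n$-operators $e_{\bar1},f_{\bar1}$, because the $\qq_n$ tensor rule for $e_0,f_0$ has exactly the shape of \eqref{q-o-1} and \eqref{q-o-2}: one applies the operator to the right factor when the left factor is $0$-inert (meaning $e_0=f_0=0$ there) and to the left factor otherwise. Calling an element $0$-inert when $\varepsilon_0=\varphi_0=0$, a one-line computation shows that $b\otimes c$ is $0$-inert if and only if both $b$ and $c$ are, so $e_0$ acts on the leftmost non-$0$-inert factor regardless of bracketing; the three-way split over which of $b,c,d$ that factor is then makes $e_0((b\otimes c)\otimes d)$ and $e_0(b\otimes(c\otimes d))$ agree, and likewise for $f_0$. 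This is verbatim the combinatorial argument behind the already-cited associativity of the $\q_n$ tensor product, and (condition (c), which forces $\varepsilon_0+\varphi_0\le1$, only simplifies the bookkeeping) I would simply invoke it.

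It remains to treat $e_{\bar1}$ and $f_{\bar1}$, governed now by \eqref{eb1-eq} and \eqref{fb1-eq}. Here I cannot reuse $\q_n$-associativity, since, as the preceding remark notes, these formulas do not agree with \eqref{q-o-1} and \eqref{q-o-2}. I would first halve the work: axiom (b) of Definition~\ref{q-crystal-def} makes $f_{\bar1}$ the partial inverse of $e_{\bar1}$ on each crystal, and $\Phi$ is a weight-preserving bijection, so the identity $\Phi\circ e_{\bar1}=e_{\bar1}\circ\Phi$ read as an equality of partial maps (with $\Phi(0)=0$) formally forces $\Phi\circ f_{\bar1}=f_{\bar1}\circ\Phi$. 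Thus I would verify only the $e_{\bar1}$ statement, by unfolding \eqref{eb1-eq} on $(b\otimes c)\otimes d$ and on $b\otimes(c\otimes d)$. Each evaluation of $e_{\bar1}$ on a two-fold tensor is selected by the four branches of \eqref{eb1-eq}, which depend only on the vanishing of $e_{\bar1},f_{\bar1},e_0,f_0$ on the left factor and of $e_0,f_0$ on the right factor; classifying $b,c,d$ by their ``$\bar1$-type'' and ``$0$-type'' and using condition (c) (which makes an element $0$-inert whenever it is $\bar1$-inert) to discard impossible combinations, one matches the two outputs branch by branch.

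The main obstacle is exactly this last case analysis. The branches of \eqref{eb1-eq} are nested — they reference the composites $f_0e_{\bar1}$ and $e_0e_{\bar1}$ applied to a factor that is itself a tensor, whose own $e_{\bar1},f_{\bar1},e_0,f_0$ values must be computed recursively — so unfolding $e_{\bar1}$ on a triple tensor produces a sizable tree of subcases, and it is the bookkeeping, rather than any single hard idea, that must be controlled. I expect the commutation relations of condition (a) between $e_0,f_0$ and the remaining operators, together with the length-$\le1$ constraint on $0$-strings from condition (c), to collapse most branches so that only a handful of genuinely distinct configurations survive. The seminormal computation in \cite{MT2023} serves as a template, with the weight conditions used there replaced throughout by the operator-vanishing conditions appearing in \eqref{eb1-eq} and \eqref{fb1-eq}.
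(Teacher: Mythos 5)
Your reductions all match the paper's: the weight map and the $\gl_n$-operators are handled by the already-established $\gl_n$-associativity, the operators $e_0,f_0$ follow the ``leftmost non-$0$-inert factor'' pattern (you actually spell this out in more detail than the paper, which simply asserts it), and the bijectivity of the map together with axiom (b) of Definition~\ref{q-crystal-def} reduces the $f_{\bar1}$ check to the $e_{\bar1}$ check, exactly as in the paper. Your observation that condition (c) forces a $\bar1$-inert element to be $0$-inert is also the key fact the paper invokes at the start of its case (a).

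The gap is that the one step carrying essentially all of the content --- verifying $e_{\bar 1}\bigl(b\otimes(c\otimes d)\bigr)=e_{\bar 1}\bigl((b\otimes c)\otimes d\bigr)$ by unfolding the four branches of \eqref{eb1-eq} on a triple tensor --- is only announced, not performed. You write that ``one matches the two outputs branch by branch'' and that you ``expect'' conditions (a) and (c) to collapse most configurations, but the paper's proof consists almost entirely of doing exactly this: it splits into four cases according to which branch of \eqref{eb1-eq} governs $e_{\bar1}(b\otimes(c\otimes d))$, and within cases (b), (c), (d) it must further analyze whether $e_0(c\otimes d)$ (resp.\ $f_0(c\otimes d)$) is computed on the $c$ or the $d$ factor, checking in each subcase that the branch selected for $e_{\bar1}((b\otimes c)\otimes d)$ produces the same element. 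None of these subcases is conceptually deep, but several require nontrivial bookkeeping --- e.g.\ in the final ``otherwise'' case one must argue that the vanishing hypotheses (1) and (2) on $c\otimes d$ propagate to $d$ alone precisely because $e_0(c)=f_0(c)=0$ in the relevant subcase --- and your proposal gives no argument that the branches genuinely align rather than an expectation that they do. As written, the proposal is a correct plan identical in strategy to the paper's proof, but it does not yet constitute a proof of the $e_{\bar1}$ commutation, which is the proposition's only substantive claim.
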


\begin{proof}
The natural map
$(\cB\otimes\cC) \otimes \cD \to \cB\otimes(\cC\otimes \cD)$
commutes with the $\gl_n$-crystal operators and also with $e_0$ and $f_0$, while preserving the weight map.
It remains to check that this map commutes with $e_{\bar 1}$ and $f_{\bar 1}$. 
Because the map is a bijection, it suffices check that it commutes with just $e_{\bar 1}$.
Fix $b \in\cB$, $c \in \cC$, and $d \in \cD$.
We check that $e_{\bar 1}(b \otimes (c\otimes d)) = e_{\bar 1}((b \otimes c)\otimes d)$:
\begin{itemize}
\item[(a)] Assume that $e_{\bar1}(b)=f_{\bar1}(b) = 0$.
If $e_{\bar1}(c) =f_{\bar1}(c) = 0$, then 
$
 e_{\bar1}(b\otimes c) =0$
 so
\[
e_{\bar 1}(b \otimes (c\otimes d)) 
=b\otimes e_{\bar 1}(c\otimes d)
=b\otimes c\otimes e_{\bar 1}(d)
= e_{\bar 1}((b \otimes c)\otimes d).
\]
Since $e_{0}(b)=f_{0}(b) = 0$ by condition (c) in Definition~\ref{qq-crystal-def}, it holds that
\[
f_0e_{\bar 1}(b\otimes c)= b\otimes f_0e_{\bar 1}(c) 
\quand
e_0e_{\bar 1}(b\otimes c)= b\otimes e_0e_{\bar 1}(c).
\]
Thus 
\[\ba 
 f_0e_{\overline1}(b\otimes c) \neq 0 &\text{\ \ if and only if\ \ } f_0e_{\overline1}(c)\neq 0,
 \\
e_0e_{\overline1}(b\otimes c) \neq 0 &\text{\ \  if and only if\ \  }e_0e_{\overline1}(c)\neq 0,
 \text{ and }
 \\
 e_0(b\otimes c)= f_0(b\otimes c) = 0 &\text{\ \  if and only if\ \  }  e_0(c)= f_0(c) = 0.
 \ea
 \]
Therefore if $f_0e_{\overline1}(c)\neq0 \neq e_0(d)$ and $ e_0 (c)=f_0(c)=0$, then 
\[\ba
e_{\bar 1}(b \otimes (c\otimes d)) 
&=b\otimes e_{\bar 1}(c\otimes d)\\
&=b\otimes f_0 e_{\bar 1}(c)\otimes e_{0}(d)\\
&=f_0e_{\bar 1}(b \otimes c)\otimes e_0(d)
= e_{\bar 1}((b \otimes c)\otimes d),\ea
\]
while if 
 $e_0e_{\overline1}(c)\neq0 \neq f_0(d)$ and $ e_0 (c)=f_0(c)=0$, then 
\[\ba
e_{\bar 1}(b \otimes (c\otimes d)) 
&=b\otimes e_{\bar 1}(c\otimes d)\\
&=b\otimes e_0 e_{\bar 1}(c)\otimes f_{0}(d)\\
&=e_0e_{\bar 1}(b \otimes c)\otimes f_0(d)
= e_{\bar 1}((b \otimes c)\otimes d),\ea
\]
while in the remaining case 
\[\ba
e_{\bar 1}(b \otimes (c\otimes d)) 
&=b\otimes e_{\bar 1}(c\otimes d)\\
&=b\otimes   e_{\bar 1}(c)\otimes d\\
&= e_{\bar 1}(b \otimes c)\otimes d
= e_{\bar 1}((b \otimes c)\otimes d).\ea
\]

\item[(b)] Next assume that
$f_0e_{\overline1}(b)\neq0 \neq e_0(c\otimes d)$ and $ e_0 (b)=f_0(b)=0$.
In this case $e_{\bar 1}(b\otimes c)\neq 0$.
If  $e_0(c\otimes d) = e_0(c) \otimes d$, then 
$e_0(c) \neq 0$
so
\[
e_{\bar 1}(b \otimes (c\otimes d)) =f_0e_{\bar 1}(b)\otimes e_0(c)\otimes d
=
e_{\bar 1}(b \otimes c)\otimes d
=
 e_{\bar 1}((b \otimes c)\otimes d).
\]
If $e_0(c\otimes d) = c\otimes e_0(d)$, then  $e_0(c)=f_0(c)=0$ and $e_0(d) \neq 0$.
This means 
\[
f_0e_{\bar 1}(b\otimes c) = f_0(e_{\bar 1}(b)\otimes c) = f_0 e_{\bar 1}(b)\otimes c \neq 0\]
and
$e_0(b\otimes c) = f_0(b\otimes c) = 0$, so 
\[
e_{\bar 1}(b \otimes (c\otimes d)) 
=
f_0e_{\bar 1}(b)\otimes c\otimes e_0(d)
=
f_0e_{\bar 1}(b\otimes c)\otimes e_0(d)
=
 e_{\bar 1}((b \otimes c)\otimes d).
\]

\item[(c)] Now assume that
$e_0e_{\overline1}(b)\neq0 \neq f_0(c\otimes d)$ and $ e_0 (b)=f_0(b)=0$.
This case is almost the same as the previous one.
We again have $e_{\bar 1}(b\otimes c)\neq 0$.
If  $f_0(c\otimes d) = f_0(c) \otimes d$, then 
$f_0(c) \neq 0$
so
\[
e_{\bar 1}(b \otimes (c\otimes d)) =e_0e_{\bar 1}(b)\otimes f_0(c)\otimes d
=
e_{\bar 1}(b \otimes c)\otimes d
=
 e_{\bar 1}((b \otimes c)\otimes d).
\]
If $f_0(c\otimes d) = c\otimes f_0(d)$, then  $e_0(c)=f_0(c)=0$ and $f_0(d) \neq 0$.
This means 
\[
e_0e_{\bar 1}(b\otimes c) = e_0(e_{\bar 1}(b)\otimes c) = e_0 e_{\bar 1}(b)\otimes c \neq 0\]
and
$e_0(b\otimes c) = f_0(b\otimes c) = 0$, so 
\[
e_{\bar 1}(b \otimes (c\otimes d)) 
=
e_0e_{\bar 1}(b)\otimes c\otimes f_0(d)
=
e_0e_{\bar 1}(b\otimes c)\otimes f_0(d)
=
 e_{\bar 1}((b \otimes c)\otimes d).
\]

\item[(d)] Finally suppose $e_{\bar 1}(b) \neq 0 $ or $f_{\bar 1}(b) \neq 0$,
and that if $e_0(b) = f_0(b)=0$ then 
\ben
\item[(1)] $f_0e_{\bar 1}(b) =0$ or $e_0(c\otimes d)= 0$,
and also 
\item[(2)]  $e_0e_{\bar 1}(b) =0$ or $f_0(c\otimes d)= 0$.
\een
This is precisely the last case in our definition of $e_{\bar 1}$, which gives 
\[
e_{\bar 1}(b \otimes (c\otimes d)) =
  e_{\bar 1}(b) \otimes c\otimes d .
  \]
  Since  $e_0(c\otimes d)= 0$ implies that $e_0(c)= 0$ and since 
  $f_0(c\otimes d)= 0$ implies that $f_0(c)= 0$,
  we also have
\[
e_{\bar 1}(b \otimes c) =
  e_{\bar 1}(b) \otimes c.
  \]
Because at least one of $e_{\bar 1}(b)\otimes c$ or $f_{\bar 1}(b)\otimes c$
is nonzero, we have either 
 $e_{\bar 1}(b\otimes c) \neq 0$ or $f_{\bar 1}(b\otimes c)\neq 0$.
If $e_0(b\otimes c) = f_0(b\otimes c) = 0\neq f_0e_{\bar 1}(b\otimes c) $, then $e_0(c) =f_0(c) =0$
 so we must have 
  \[ f_0e_{\bar 1}(b\otimes c) = f_0(e_{\bar 1}(b) \otimes c) = f_0e_{\bar 1}(b) \otimes c
  \quand
  f_0e_{\bar 1}(b) \neq 0.
  \]
In this case, it follows from property (1) that 
 $  e_0(c\otimes d) = 0$ which can only hold if $e_0(d) =0$ since $e_0(c\otimes d) = c\otimes e_0(d)$.
 Likewise, 
  if $e_0(b\otimes c) = f_0(b\otimes c) = 0\neq e_0e_{\bar 1}(b) $, then  
  again $e_0(c) =f_0(c) =0$ so we must have 
  \[ e_0e_{\bar 1}(b\otimes c) = e_0(e_{\bar 1}(b) \otimes c) = e_0e_{\bar 1}(b) \otimes c
  \quand
  e_0e_{\bar 1}(b) \neq 0.
  \]
  In this case, it follows from property (2) that 
  $  f_0(c\otimes d) = 0$ which can only hold if $f_0(d) =0$ since $f_0(c\otimes d) = c\otimes f_0(d)$. 
  This lets us conclude that 
\[  e_{\bar 1}((b\otimes c)\otimes d) = e_{\bar 1}(b\otimes c)\otimes d.\]
Combining these equations gives the desired identity
\[
e_{\bar 1}(b \otimes (c\otimes d)) =
  e_{\bar 1}(b) \otimes c\otimes d =
  e_{\bar 1}(b \otimes c)\otimes d=
  e_{\bar 1}((b\otimes c)\otimes d)
.\]

\end{itemize} 
This shows that   $e_{\bar 1}$ 
  commutes with 
the bijection
$(\cB\otimes\cC) \otimes \cD \to \cB\otimes(\cC\otimes \cD)$,
which concludes our proof that this map is a $\qq_n$-crystal isomorphism.
\end{proof}

\subsection{Signature rules}

We have already encountered \defn{primed numbers} $1'<1<2'<2<\dots$
as formal symbols in our definition of shifted tableaux. From this point on, we 
 define $i' := i - \frac{1}{2} $ for $i \in \ZZ$ and set $\ZZ' := \ZZ -\frac{1}{2}$.
A \defn{primed word} is a finite sequence of primed numbers.
\defn{Removing the prime} from $i'$ corresponds to the   ceiling operation $\lceil\cdot\rceil$.

We identify a word $w=w_1w_2\cdots w_m$ having $w_i \in \{1'<1<\dots<n'<n\}$ 
with the tensor $w_1\otimes w_2 \otimes \cdots \otimes w_m \in (\BB^+_n)^{\otimes m}$.
This lets us evaluate $\weight(w)$, $e_i(w)$, and $f_i(w)$ for $i \in[n-1]$ using the definition of $(\BB^+_n)^{\otimes m}$. 
For example, the weight of $w$ is the vector whose $i$th component is the number of letters equal to $i$ or $i'$. 

Let $\unprime : (\BB^+_n)^{\otimes m} \to \BB_n^{\otimes m}$
be the map that replaces  $w=w_1w_2\cdots w_m$
by $\lceil w_1 \rceil \lceil w_2 \rceil \cdots \lceil w_m \rceil$.
Also set $\unprime(0)=0$.
The following is easy to check: 

\begin{lemma}\label{unprime-lem1}
If $w \in (\BB^+_n)^{\otimes m}$ and $i \in \{\bar 1,1,2,\dots,n-1\}$ then 
\[ \unprime(e_i(w)) = e_i(\unprime(w)) \quand
\unprime(f_i(w)) = f_i(\unprime(w)).\]
\end{lemma}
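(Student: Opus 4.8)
The plan is to isolate the single-letter behavior together with the effect of $\unprime$ on the zeroth operators, and then push the identity through the tensor product. I would first record three facts. (a) $\unprime(x)=0$ if and only if $x=0$, since $\unprime$ sends words to words. (b) At the level of single letters, comparing the crystal graphs of Examples~\ref{st-ex2} and~\ref{st-ex3} shows that $\unprime$ commutes with $e_i$ and $f_i$ for every $i\in\{\bar 1,1,2,\dots,n-1\}$: each such operator alters only the unprimed value of a letter and never its prime-status, and $\unprime$ reads off precisely this unprimed value. (c) The operators $e_0$ and $f_0$ merely toggle a prime on a single letter of value $1$, so $\unprime(e_0(x))=\unprime(x)$ and $\unprime(f_0(x))=\unprime(x)$ whenever $e_0(x)\neq 0$ respectively $f_0(x)\neq 0$.

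For $i\in[n-1]$ the claim is immediate. Both $(\BB^+_n)^{\otimes m}$ and $\BB_n^{\otimes m}$ carry the identical $\gl_n$ tensor product rule, and $e_i$, $f_i$ (as well as the string lengths $\varepsilon_i$, $\varphi_i$) are computed by the signature rule applied to the subword of letters of value $i$ or $i+1$, with value-$i$ and value-$(i+1)$ letters each contributing a fixed sign regardless of primes. Since $\unprime$ preserves the sequence of unprimed values, and the operators change only the unprimed value of the single bracketed letter, the outputs agree after unpriming, giving $\unprime(e_i(w))=e_i(\unprime(w))$ and likewise for $f_i$.

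The essential case is $i=\bar 1$, which I would treat by induction on $m$, establishing the identities for $e_{\bar 1}$ and $f_{\bar 1}$ simultaneously and writing $w=u\otimes v$ with $v$ the final letter, so that $u$ has length $m-1$. The comparison is between the four branches of the $\qq_n$ rule \eqref{eb1-eq} and the two branches of the $\q_n$ rule \eqref{q-o-1} (and symmetrically \eqref{fb1-eq} against \eqref{q-o-2}). Using the inductive hypothesis on $u$ together with fact (a), the hypothesis $e_{\bar 1}(u)=f_{\bar 1}(u)=0$ of the first branch is equivalent to its unprimed counterpart, which matches the first branches of the two rules after also invoking the inductive hypothesis on the letter $v$. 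In each remaining branch one has $e_{\bar 1}(u)\neq 0$ or $f_{\bar 1}(u)\neq 0$, so the $\q_n$ rule necessarily outputs $e_{\bar 1}(\unprime(u))\otimes\unprime(v)$; applying $\unprime$ to the $\qq_n$ outputs $f_0 e_{\bar 1}(u)\otimes e_0(v)$ and $e_0 e_{\bar 1}(u)\otimes f_0(v)$ and invoking fact (c) to discard the $e_0$ and $f_0$ decorations reproduces exactly $e_{\bar 1}(\unprime(u))\otimes\unprime(v)$, while the generic branch $e_{\bar 1}(u)\otimes v$ does so directly and the degenerate subcase $e_{\bar 1}(u)=0\neq f_{\bar 1}(u)$ forces both sides to vanish.

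I expect the only real obstacle to be this $\bar 1$ case, precisely because the $\q_n$ and $\qq_n$ tensor product rules for $\bar 1$ genuinely differ: \eqref{eb1-eq} carries the auxiliary $e_0$ and $f_0$ factors that \eqref{q-o-1} lacks. The conceptual point that resolves it is fact (c): $\unprime$ forgets exactly the data recorded by $e_0$ and $f_0$, so the more elaborate $\qq_n$ rule collapses onto the $\q_n$ rule after unpriming. The remaining work is the routine but slightly tedious verification that the branch hypotheses of the two rules correspond correctly under $\unprime$, which the inductive hypothesis and facts (a)--(c) handle uniformly.
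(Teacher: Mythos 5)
Your proof is correct; the paper offers no argument for this lemma at all (it is introduced with ``The following is easy to check''), so there is nothing to match it against, and your write-up is a legitimate way to fill in the omitted verification. Your key observations are exactly the right ones: the $\gl_n$ operators and string lengths depend only on the ceilings of the letters and change a single letter by $\pm 1$, which commutes with $\lceil\cdot\rceil$; and $\unprime$ absorbs the $e_0$, $f_0$ decorations appearing in branches two and three of \eqref{eb1-eq} and \eqref{fb1-eq}, collapsing the $\qq_n$ tensor rule onto the $\q_n$ rule \eqref{q-o-1}--\eqref{q-o-2}. The branch-matching in your induction is sound because the ``otherwise'' branches of both rules are governed by whether $e_{\bar 1}(u)=f_{\bar 1}(u)=0$, a condition preserved under $\unprime$ by the inductive hypothesis together with your fact (a); and in the $f_{\bar 1}$ case the inductive hypothesis applies to $e_0(u)$ or $f_0(u)$, which still has length $m-1$ and unprimes to $\unprime(u)$. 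The only remark worth adding is that a shorter route is available once Proposition~\ref{sign-ex} is in hand: the signature-rule descriptions of $e_i$, $f_i$, $e_{\bar 1}$, $f_{\bar 1}$ locate the affected positions using only the ceilings of the letters and modify those ceilings identically whether or not primes are present (the $\bar 1$ operators merely permute primes among the touched positions), so commutation with $\unprime$ can be read off in one line; but since the lemma precedes that proposition in the text, your direct verification from the tensor-product formulas is the more self-contained choice.
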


It is possible to evaluate $e_i(w)$ and $f_i(w)$  directly from the formulas for  the tensor product $\otimes$, but this can be done more efficiently using the   following \defn{signature rule}.

Fix $i \in [n-1]$ and a primed word $w=w_1w_2\cdots w_m$. 
Mark each entry $w_j \in \{i',i\}$ by a right parenthesis ``)"  and each   $w_j \in \{i+1',i+1\}$ 
by a left parenthesis ``(". The \defn{$i$-unpaired indices} in $w$ are the indices $j \in [m]$ with 
$w_j \in \{i',i,i+1', i+1\}$ that are not the positions of matching parentheses.
In this case we refer to $w_j$ as an \defn{$i$-unpaired letter} of $w$.

 \begin{proposition}[\cite{GHPS,MT2023}]
 \label{sign-ex}
 Consider a primed word $w=w_1w_2\cdots w_m$. 
For each $i \in[n-1]$, one can compute  $e_i(w)$ and $f_i(w)$ using the following rules:
\begin{itemize}
\item[($e_i$)] If no $i$-unpaired index $j$ of $w$ has $w_j \in \{i+1', i+1\}$ then $e_i(w) := 0$.
\newline
Otherwise, if $j$ is the first such index, then $e_i(w) = w_1 \cdots (w_j-1)\cdots w_m$.

\item[($f_i$)] If no $i$-unpaired index $j$ of $w$ has $w_j \in \{i', i\}$ then $f_i(w) := 0$.
\newline
Otherwise, if $j$ is the last such index, then $f_i(w) = w_1 \cdots (w_j+1)\cdots w_m$.

\end{itemize}
The formulas for $e_0(w)$ and $f_0(w)$ are more straightforward: 
\begin{itemize} 
\item[($e_0$)] If $w$ has  no $1'$ letters or if a $1$ appears before the first $1'$, then $e_{0}(w)=0$.
\newline
Otherwise, $e_{0}(w)$ is formed by changing the first $1'$ in $w$ to $1$.

\item[($f_0$)] If $w$ has  no $1$ letters or if a $1'$ appears before the first $1$, then $f_{0}(w)=0$.
\newline
Otherwise, $f_{0}(w)$ is formed by changing the first $1$ in $w$ to $1'$.

\end{itemize}
Finally, one can compute $e_{\bar 1}(w)$  and $f_{\bar 1}(w)$  as follows: 
\begin{itemize}
\item[($e_{\bar 1}$)] Let $j,k\in[m]$ be minimal with $w_j \in \{2',2\}$ and $w_k \in \{1',1\}$.
\newline
 If $j$ does not exist or if $j>k$ then $e_{\bar 1}(w) =0$.
\newline
 If $j$ exists but $k$ does not, then $e_{\bar 1}(w) =  e_1(w)=w_1 \cdots (w_j-1)\cdots w_m$.
\newline
Otherwise, $e_{\bar 1} :  w=w_1 \cdots w_j \cdots w_k \cdots w_m \mapsto w_1 \cdots w_k \cdots (w_j-1)\cdots w_m.$
 \newline
This changes $w_jw_k = 2^\bullet 1^\circ$ to $1^\circ 1^\bullet$
where $\bullet $ and $\circ$ are arbitrary primes.

\item[($f_{\bar 1}$)] Now let $j,k\in[m]$ be minimal with $w_j,w_k \in \{1',1\}$ and $j<k$.
\newline
 If $j$ does not exist or some $i \in [j-1]$ has $w_i \in \{2',2\}$ then $f_{\bar 1}(w) =0$.
\newline
If $j$ exists but $k$ does not, then $f_{\bar 1}(w) := f_1(w)= w_1 \cdots (w_j+1)\cdots w_m$.
\newline
Otherwise,  $f_{\bar 1} : w=w_1 \cdots w_j \cdots w_k \cdots w_m \mapsto w_1 \cdots (w_k+1) \cdots w_j\cdots w_m$
\newline
This changes $w_jw_k = 1^\circ 1^\bullet$ to $2^\bullet 1^\circ$
where $\bullet $ and $\circ$ are arbitrary primes.

\end{itemize}
\end{proposition}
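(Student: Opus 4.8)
The plan is to prove each group of formulas by induction on the length $m$, peeling off the leftmost letter and feeding $w_1 \otimes (w_2 \cdots w_m)$ into the appropriate tensor product recursion; the base case $m = 1$ is just the single-letter action read off the standard crystal $\BB^+_n$ of Example~\ref{st-ex3} (namely $f_0 : 1 \mapsto 1'$ and $e_0 : 1' \mapsto 1$; $f_i : i \mapsto i+1$ and $e_i : i+1 \mapsto i$; and $f_{\bar 1} : 1 \mapsto 2$, $e_{\bar 1} : 2 \mapsto 1$, together with their primed analogues). Because every $(\BB^+_n)^{\otimes m}$ is seminormal, I may throughout replace \eqref{eb1-eq}--\eqref{fb1-eq} by the simpler seminormal formulas recorded in the Remark, whose branches are governed by $\weight(w_1)_1$ and $\weight(w_1)_2$. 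I would treat the operators in the order $(e_i, f_i)$, then $(e_0, f_0)$, then $(e_{\bar 1}, f_{\bar 1})$, since the last pair is defined in terms of $e_0$ and $f_0$.

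For the $\gl_n$-operators with $i \in [n-1]$, I would apply Lemma~\ref{unprime-lem1} to pass to the unprimed word $\unprime(w) \in \BB_n^{\otimes m}$: since $e_i$ and $f_i$ preserve primes on single letters and treat $i$ and $i'$ alike, the prime decoration is an inert color, and the claim becomes the classical bracketing rule for tensor powers of the standard $\gl_n$-crystal. That rule is the familiar induction on $m$: appending a letter either cancels an innermost bracket or lengthens the unmatched run, exactly as dictated by the comparison $\varepsilon_i \lessgtr \varphi_i$ in the tensor recursion. The $(e_0), (f_0)$ rules are shorter still: the recursion strips letters outside $\{1', 1\}$ off the left until it meets the first $\{1', 1\}$-letter, and the single-letter actions $f_0 : 1 \mapsto 1'$ and $e_0 : 1' \mapsto 1$ (with $f_0(1') = e_0(1) = 0$) then give precisely the stated ``first $1$'' and ``first $1'$'' prescriptions together with the vanishing conditions.

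The heart of the proposition is the queer rule $(e_{\bar 1})$, which I would prove by induction on $m$ with $b = w_1$ and $c = w_2 \cdots w_m$, splitting on $w_1$. If $w_1 \notin \{1', 1, 2', 2\}$ then $\weight(b)_1 = \weight(b)_2 = 0$, the first seminormal branch gives $e_{\bar 1}(w) = w_1 \otimes e_{\bar 1}(c)$, and since such a letter is inert for the signature rule the claim follows by the inductive hypothesis applied to $c$. If $w_1 \in \{1', 1\}$ then $\weight(b)_1 = 1$ forces the last branch and $e_{\bar 1}(b) = 0$, so $e_{\bar 1}(w) = 0$; this matches the signature rule because the first $\{1', 1\}$-letter is now $w_1$, which precedes every $\{2', 2\}$-letter, i.e. $k = 1 < j$. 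The decisive case is $w_1 \in \{2', 2\}$, where $j = 1$ and $e_{\bar 1}(b) \in \{1', 1\}$: here the tail quantities $e_0(c)$ and $f_0(c)$, already computed by the $(e_0), (f_0)$ rules, act on the first $\{1', 1\}$-letter $w_k$ of $c$, and I would compare against the signature rule according to the primes of $w_1$ and $w_k$. When these two primes differ, the relevant branch is case $2$ or case $3$ of the seminormal formula, and the factor $f_0 e_{\bar 1}(b) \otimes e_0(c)$ or $e_0 e_{\bar 1}(b) \otimes f_0(c)$ carries out exactly the prescribed migration $2^\bullet 1^\circ \mapsto 1^\circ 1^\bullet$; when the primes agree, or when $c$ has no $\{1', 1\}$-letter at all (so $e_0(c) = f_0(c) = 0$), the last branch fires and still reproduces the signature rule, degenerating to $e_{\bar 1}(w) = e_1(w)$ precisely in the latter subcase.

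I expect the case $w_1 \in \{2', 2\}$ to be the main obstacle, as it is the only place where a prime migrates and where the clean signature prescription must be matched against the auxiliary applications of $e_0$ and $f_0$ to the tail; the bookkeeping across the four prime combinations is where any position error would surface. Once $(e_{\bar 1})$ is verified, I would deduce $(f_{\bar 1})$ either by the symmetric induction or, more economically, from the biconditional $e_{\bar 1}(b) = c \iff f_{\bar 1}(c) = b$ of Definition~\ref{q-crystal-def}(b), by checking that the stated $f_{\bar 1}$ rule is the partial inverse of the now-established $e_{\bar 1}$ rule.
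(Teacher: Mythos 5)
The paper itself gives no proof of Proposition~\ref{sign-ex}: it is quoted as a known result from \cite{GHPS,MT2023}, so there is no in-paper argument to compare against. Your plan is the standard (and, as far as I can tell, correct) way to establish it: peel off the left tensor factor, induct on length, and match each branch of the tensor-product recursion against the stated combinatorial rule, using seminormality of $(\BB_n^+)^{\otimes m}$ to invoke the simplified forms of \eqref{eb1-eq}--\eqref{fb1-eq}. I checked your decisive case $w_1 \in \{2',2\}$ against the four prime combinations of $w_1$ and the first $\{1',1\}$-letter $w_k$ of the tail: the conditions $f_0 e_{\bar 1}(w_1)\neq 0 \neq e_0(c)$ and $e_0 e_{\bar 1}(w_1) \neq 0 \neq f_0(c)$ do fire exactly when the primes differ, and the ``otherwise'' branch reproduces $2^\bullet 1^\circ \mapsto 1^\circ 1^\bullet$ when they agree, so that bookkeeping is right.

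Two points deserve more care than your sketch gives them. First, for $i\in[n-1]$, Lemma~\ref{unprime-lem1} only controls $\unprime(e_i(w))$; to conclude that the \emph{position} modified is the one named by the signature rule you should also note that $\varepsilon_i$ and $\varphi_i$ take the same values on $x$ and $x'$, so the tensor recursion selects the same factor for $w$ as for $\unprime(w)$ --- your phrase about the prime being an ``inert color'' is the right idea but this is where it needs to be cashed out. Second, deducing $(f_{\bar 1})$ from the biconditional of Definition~\ref{q-crystal-def}(b) requires not just checking that the stated $f_{\bar 1}$ rule inverts the established $e_{\bar 1}$ rule where both are nonzero, but also that $w$ lies outside the image of $e_{\bar 1}$ whenever the stated rule declares $f_{\bar 1}(w)=0$; this follows from observing that every nonzero output of $e_{\bar 1}$ has its first $\{1',1\}$-letter preceding all of its $\{2',2\}$-letters, but that image characterization should be stated explicitly.
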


 \section{Results}
 
 This section contains our new results and is organized as follows.
 Section~\ref{dec-sect} introduces a $\qq_n$-crystal on \defn{primed decomposition tableaux}.
 Section~\ref{hl-sect} discusses the \defn{highest and lowest weight elements} for this crystal.
 Sections~\ref{ins-sect} and \ref{pro-sect} are concerned with the crystal-theoretic properties of an insertion algorithm for primed decomposition tableaux, extending a construction in \cite{GJKKK}. Sections~\ref{normal-sect} and \ref{comp-sect}
  derive several applications.

\subsection{Decomposition tableaux}\label{dec-sect}

A \defn{hook word} is a finite sequence of positive integers $w=w_1w_2\cdots w_n$ 
such that $w_1 \geq w_2 \geq \dots \geq w_m < w_{m+1} < w_{m+2} <\dots <w_n$ for some $m\in[n]$.
 Given such a hook word, let ${w\downarrow} := w_1w_2\cdots w_m$ denote the \defn{decreasing part} and 
 let ${w\uparrow} := w_{m+1}w_{m+2}\cdots w_n$ denote the \defn{increasing part}.

Fix a strict partition $\lambda$.
 A \defn{(semistandard) decomposition tableau} of shape $\lambda$ 
 is a shifted tableau $T$ of shape $\lambda$ such that if $\rho_i$ denotes row $i$ of $T$,
 then (1) each $\rho_i$ is a hook word and (2)  $\rho_i$ is a hook subword of maximal length in $\rho_{i+1}\rho_i$ for each $i \in [\ell(\lambda)-1]$.

This preceding definition follows \cite{GJKKK} but differs from \cite{CNO,Serrano}, where the opposite weak/strict inequality convention is used for hook words.
What we called a decomposition tableau is referred to as a \defn{reverse semistandard decomposition tableau} in \cite[Def.~2.8]{CNO}.
For a bijection between the two families of decomposition tableaux, see \cite[Thm.~3.9]{CNO}

Let $\DTab(\lambda)$ be the set of all decomposition tableaux of shape $\lambda$
and let $\DTab_n(\lambda)$ but the subset of such tableaux that have all entries in $[n]$.

 \begin{example}
Draw our tableaux in French notation, we have 
\[ 
\ytab{ \none & 1 & 1 \\ 2 & 2 & 1 } \in  \DTab((3,2))
\quad\text{but}\quad  \ytab{ \none & 1 & 1 \\ 2 & 2 & 3 } \notin  \DTab((3,2)).
\]
The second example is not a decomposition tableau because its row reading word 
$\rho_2\rho_1  = 11223$ contains the hook subword $1123$ which is longer than $\rho_1=223$.
 \end{example}
 
  The maximal hook subword condition in the definition of a decomposition tableau is satisfied if and only if 
 certain inequalities never hold for triples of entries in consecutive rows.
 In our experience, it is usually much easier to reason about decomposition tableaux conceived in terms of these inequalities.
 
 \begin{lemma}[{\cite[Prop.~2.3]{GJKKK}}] \label{decomptab-characterisation}
 Let $T$ be shifted tableau of shape $\lambda$ whose rows are each hook words.
 Then $T$ is a decomposition tableaux if and only if none of the following conditions holds for any $i \in [\ell(\lambda)-1]$ and $j,k \in [\lambda_{i+1}]$:
 \ben
 
  \item[(a)] $T_{i,i} \leq T_{i+1,i+k}$ or $ T_{i,i+j} \leq T_{i+1,i+k}  \leq  T_{i+1,i+j}$ when $j<k$, 
 \item[(b)] $T_{i+1,i+k} < T_{i,i} < T_{i,i+k}$ or $T_{i+1,i+k} < T_{i,i+j} < T_{i,i+k}$ when $j<k$.
  \een
 That is, we forbid rows $i$ and $i+1$ of $T$ from having configurations of entries 
 \[
 \ytabb{
 \none & \none & \cdots & b  \\
 \none & a  & \cdots & \  
 },
\quad
 \ytabb{
 \none & \cdots  & c & \cdots & b  \\
\ & \cdots & a & \cdots & \ 
 },
  \quad
 \ytabb{
 \none & \cdots  &x     \\
y & \cdots & z  
 },
 \quord
 \ytabb{
 \none & \cdots  &  & \cdots & x  \\
\ & \cdots & y & \cdots & z 
 }
 \]
  with $a\leq b \leq c$ and $x<y<z$.
  Here, the leftmost boxes are on the main diagonal and the ellipses ``$\cdots$'' indicate sequences of zero or more columns.
 \end{lemma}

Define the  \defn{middle element} of a hook word $w$ to be the last letter in the weakly decreasing subword $w\downarrow$.
Suppose $T$ is a decomposition tableau of strict partition shape $\lambda$.
We call any tableau  
formed by adding primes to the middle elements in a subset of rows in $T$
 a \defn{primed decomposition tableau} of shape $\lambda$.
Let $\DDTab(\lambda)$ denote the set of such tableaux $T$
and let $\DDTab_n(\lambda)$ be the subset consisting of those $T$
with all entries in $\{1'<1<\dots<n'<n\}$.

\begin{example}
$ \ytab{ \none & 1 \\ 2 & 1 & 2 },
$
$
\ytab{ \none & 1' \\ 2 & 1 & 2 },
$
$
\ytab{ \none & 1 \\ 2 & 1' & 2 },
$ 
$ 
\ytab{ \none & 1' \\ 2 & 1' & 2 }
$
are all in $\DDTab((3,1))$.
\end{example}
 
 It is useful to observe when  $\DTab_n(\lambda)$ and $\DDTab_n(\lambda)$ are nonempty.
 \begin{lemma}\label{nonempty-lem}
 Suppose $\lambda$ is a strict partition. Then the set $\DTab_n(\lambda)$ (equivalently, $\DDTab_n(\lambda)$)
 is nonempty if and only if $\lambda$ has at most $n$ nonzero parts.
 \end{lemma}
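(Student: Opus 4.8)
The plan is to prove the biconditional in Lemma~\ref{nonempty-lem} by establishing the two directions separately, and to note first that $\DTab_n(\lambda)$ and $\DDTab_n(\lambda)$ are nonempty simultaneously. This equivalence is immediate: every element of $\DTab_n(\lambda)$ is already a primed decomposition tableau with no primes added, so $\DTab_n(\lambda) \subseteq \DDTab_n(\lambda)$; conversely, removing all primes from a primed decomposition tableau returns an ordinary decomposition tableau of the same shape with entries still in $[n]$ (removing a prime applies the ceiling operation, which does not change whether an entry is at most $n$). Hence it suffices to analyze $\DTab_n(\lambda)$.

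\textbf{The forward direction.} Suppose $T \in \DTab_n(\lambda)$ is nonempty. Since $T$ is a shifted tableau of shape $\lambda$, its rows are indexed by $[\ell(\lambda)]$, and each row $\rho_i$ is a hook word with entries in $[n]$. The key observation is that distinct rows must begin with distinct diagonal entries, and in fact the diagonal entries must strictly decrease going up. I would argue that if $\ell(\lambda) > n$, then the $\ell(\lambda)$ diagonal entries $T_{1,1}, T_{2,2}, \dots$ cannot all fit in $[n]$ once we show they are strictly decreasing. Concretely, condition~(a) of Lemma~\ref{decomptab-characterisation} forbids $T_{i,i} \leq T_{i+1,i+k}$ for the configuration with the diagonal box of row $i+1$; applying this with $k$ chosen so that $T_{i+1,i+k}$ is the diagonal entry $T_{i+1,i+1}$ of the next row forces $T_{i,i} > T_{i+1,i+1}$, i.e.\ the diagonal entries strictly decrease from bottom to top. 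A strictly decreasing sequence of $\ell(\lambda)$ positive integers bounded by $n$ can have length at most $n$, so $\ell(\lambda) \leq n$.

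\textbf{The reverse direction.} Suppose $\lambda$ has at most $n$ nonzero parts, so $\ell(\lambda) \leq n$. I would exhibit an explicit decomposition tableau with entries in $[n]$, the natural candidate being the tableau whose row $i$ (counting from the bottom) is constant with value $\ell(\lambda) - i + 1$, or more simply the tableau in which the $i$th row from the top is filled entirely with the value $i$. Each such row is a (weakly) decreasing word, hence a hook word with empty increasing part, and the maximal-hook-subword condition is easily verified against Lemma~\ref{decomptab-characterisation}: with all entries of row $i$ equal to some value and all entries of row $i+1$ equal to a strictly smaller value, neither the forbidden ``$a \le b \le c$'' nor the ``$x < y < z$'' configurations can arise across consecutive rows. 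All values lie in $[\ell(\lambda)] \subseteq [n]$, so this tableau lies in $\DTab_n(\lambda)$, proving nonemptiness.

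\textbf{The main obstacle} I anticipate is not the construction of an example but verifying cleanly, in the forward direction, that the diagonal strictly decreases — one must instantiate the inequalities of Lemma~\ref{decomptab-characterisation} at the correct choice of $j,k$ and handle the boundary case $k=1$ where the relevant off-diagonal box coincides with a diagonal box. Once that structural fact is in hand, the pigeonhole bound $\ell(\lambda) \leq n$ is immediate, and the overall argument is short.
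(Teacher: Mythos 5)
Your proposal is correct and follows essentially the same route as the paper: the reverse direction exhibits the tableau with constant rows whose values strictly decrease going up (the paper uses $n+1-i$ in row $i$ rather than $\ell(\lambda)-i+1$, which is immaterial), and the forward direction derives the strict decrease of the diagonal entries from Lemma~\ref{decomptab-characterisation} and concludes by pigeonhole. The $k=1$ instance of condition~(a) you flag as a potential obstacle is in fact directly permitted by the lemma's hypotheses, so no extra care is needed there.
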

 
 \begin{proof}
 If $\ell(\lambda) \leq n$ then the shifted tableau of shape $\lambda$ with $n+1 - i$ in all boxes in row $i$
belongs to $\DTab_n(\lambda)\subseteq \DDTab_n(\lambda)$.
If $\ell(\lambda) > n$ then $\DTab_n(\lambda)$  and $\DDTab_n(\lambda)$ are empty 
as the diagonal entries of any $T \in \DTab_n(\lambda)$ must form a strictly decreasing sequence 
 of integers in $[n]$
  by Lemma~\ref{decomptab-characterisation}.
 \end{proof}
 
 The \defn{row reading word} of a shifted tableau $T$ is 
the word $\row(T)$ formed by reading the rows from left to right, but starting with last row.
The \defn{reverse row reading word} of  $T$ is  the reversal
of $\row(T)$; we denote this by $\revrow(T)$. 

\begin{example}
$\row\(\ytab{ \none & 1 \\ 2 & 1 & 1' }\) =1211'$
and
$\revrow\(\ytab{ \none & 2 & 1 \\ 2 & 2 & 3 }\) =32212$.
\end{example}

A \defn{crystal embedding} is a weight-preserving injective map $\phi : \cB \to \cC$ between crystals 
that commutes with all crystal operators, in the sense that $\phi(e_i(b)) = e_i(\phi(b))$ and $\phi(f_i(b)) = f_i(\phi(b))$
for all $b \in \cB$ when we set $\phi(0) =0$.
The following theorem extends \cite[Thm.~2.5(a)]{GJKKK}
from $\q_n$-crystals to $\qq_n$-crystals.

\begin{theorem}\label{revrow-thm}
Suppose $\lambda$ is a strict partition with at most $n$ parts.
There is a unique $\qq_n$-crystal structure on $\DDTab_n(\lambda)$ 
that makes $\revrow : \DDTab_n(\lambda)\to (\BB^+_n)^{\otimes |\lambda|}$
into a $\qq_n$-crystal embedding.
This structure restricts to a 
 $\q_n$-crystal  on $\DTab_n(\lambda)$
 for which $\revrow : \DTab_n(\lambda)\to \BB_n^{\otimes |\lambda|}$
is a $\q_n$-crystal embedding.
\end{theorem}

The relevant weight map has $x^{\weight(T)} = x^T$.
Figure~\ref{tab-fig} shows an example.

\begin{proof}
Let $T \in \DDTab_n(\lambda)$ and $i \in \{\bar 1,0,1,2,\dots,n-1\}$, and $w = \revrow(T)$.
When $e_i(w) =0$ define $e_i(T) =0$, and when $e_i(w) \neq 0$ define
$e_i(T)$ be the unique shifted tableau of shape $\lambda$ with $\revrow(e_i(T)) = e_i(w)$.
Define $f_i(T)$ analogously. We must verify that $e_i(T)$ and $f_i(T)$
are in 
$\DDTab_n(\lambda)\sqcup \{0\}$. 

This is almost self-evident when $i=0$. In this case
 $e_i(T)$ and $f_i(T)$ are either zero or formed by locating the boxes of $T$ containing $1'$ or $1$, and then toggling the prime on the first such box to appear in the reverse row reading word order.
The toggled box must  contain the middle element of its row, so the resulting tableau is  in $\DDTab_n(\lambda)$.

Now assume $i \in \{\bar 1,1,2,\dots,n-1\}$. 
Then $e_i$ and $f_i$ commute with $\unprime$ by Lemma~\ref{unprime-lem1},
so we know already that $\unprime(e_i(T)) = e_i(\unprime(w))$ and $\unprime(f_i(T))= f_i(\unprime(w))$
belong to $\DTab_n(\lambda) \sqcup \{0\}$.
We just need to explain why all primed entries in $e_i(T)$ and $f_i(T)$ are the middle elements of their rows. 
Suppose $i \in [n-1]$ and $e_i(T) \neq 0$. Then $e_i(T)$ is formed from $T$ by decrementing 
the entry in the box $(x,y)$ that contributes the first $i$-unpaired letter of $w$ equal to $i+1'$ or $i+1$. 
The defining properties of $(x,y)$ imply that box $(x,y-1)$ cannot contain $i'$ or $i$ in $T$
(as then $T_{xy}$ would be $i$-paired in $w$)
while box $(x,y+1)$ cannot contain $i+1'$ or $i+1$ in $T$ (as then $T_{x,y+1}$ would contribute an earlier $i$-unpaired letter to $w$).
Therefore, subtracting one from $T_{xy}$ does not change the locations of the middle elements of $T$, so the locations of the middle elements of $T$ and $e_i(T)$ are identical. As the locations of the primed entries of $T$ and $e_i(T)$ also coincide, we conclude that $e_i(T) \in \DDTab_n(\lambda)$.

If $i \in [n-1]$ and $f_i(T) \neq 0$,
then $f_i(T)$ is formed from $T$ by adding one to 
the entry in the box $(x,y)$ that contributes the last $i$-unpaired letter of $w$ equal to $i'$ or $i$. 
As in the previous case, one can deduce from these properties
that the locations of the middle elements are the same in $T$ and $f_i(T)$, as are the locations of the primed entries, so   $f_i(T) \in \DDTab_n(\lambda)$.

Next suppose $e_{\bar 1}(T) \neq 0$.
Define $j$ and $k$ as in part ($e_{\bar 1}$) of Proposition~\ref{sign-ex},
and let $(x_j,y_j)$ and $(x_k,y_k)$ be the boxes of $T$ that contribute entries $w_j$ and $w_k$ to $w$.
If $k$ does not exist 
then $(x_j,y_j)$ contains the middle element of its row in both $T$ and $e_{\bar 1}(T)$.
Then, as above, the locations of the middle elements of $e_{\bar 1}(T)$
are the same as in $T$, as are the locations of the primed elements, and this is enough to deduce that  $e_{\bar 1}(T) \in \DDTab_n(\lambda)$.

Assume $k$ exists. If $x_j < x_k$ then 
$(x_j,y_j)$ and $(x_k,y_k)$ contain the middle elements of their rows in both $T$ and $e_{\bar 1}(T)$. In this case,
the locations middle elements in the rows of these tableaux again coincide, and the set of primed boxes in $e_{\bar 1}(T)$ is the symmetric set difference of $\{(x_j,y_j),(x_k,y_k)\}$ and the set of primed boxes in $T$.
It follows that every primed box of $e_{\bar 1}(T)$ contains the middle element of its row, so $e_{\bar 1}(T) \in \DDTab_n(\lambda)$.

We can only have $x_j=x_k$ if $y_j = y_k +1$.
Then, the locations of the middle elements in $e_{\bar 1}(T)$ are derived from those of $T$
by removing $(x_k,y_k)$ and adding $(x_j,y_j)$.
In this case, outside row $x_j=x_k$,  the primed boxes 
in $e_{\bar 1}(T)$ are the
 same as in $T$ and only contain middle elements.
 In row $x_j=x_k$, if there is a primed box in $T$, then it occurs in column $y_k$ but moves to column $y_j=y_k+1$ in $e_i(T)$, so remains with the middle element.
 We conclude again that $e_{\bar 1}(T) \in \DDTab_n(\lambda)$.

 Finally suppose $f_{\bar 1}(T) \neq 0$.
Define $j$ and $k$ as in the part ($f_{\bar 1}$) of Proposition~\ref{sign-ex},
and let $(x_j,y_j)$ and $(x_k,y_k)$ be the boxes of $T$ that contribute entries $w_j$ and $w_k$ to $w$.
One checks that if
 $k$ does not exist 
then $(x_j,y_j)$ contains the middle element of its row in both $T$ and $f_{\bar 1}(T)$,
and the locations of the middle elements of $f_{\bar 1}(T)$
are the same as in $T$, as are the locations of the primed elements.
In turn, if $k$ exists and $x_j < x_k$ then
the locations of the middle elements $T$ and $f_{\bar 1}(T)$ coincide, while the set of primed boxes in $f_{\bar 1}(T)$ is the symmetric set difference of $\{(x_j,y_j),(x_k,y_k)\}$ and the set of primed boxes in $T$.

In the only remaining case, 
we must have $x_j=x_k$ and $y_j = y_k +1$.
Then,
outside row $x_j=x_k$,  the primed boxes 
in $f_{\bar 1}(T)$ are the
 same as in $T$ so are the positions of the middle elements in some subset of rows.
In row $x_j=x_k$, if there is a primed box in $T$, then it occurs in column $y_j$ but moves to column $y_k$ in $f_i(T)$, and so remains with the middle element.
From the observations, which exactly mirror the $e_{\bar 1}$ subcases,
we deduce that  $f_{\bar 1}(T) \in \DDTab_n(\lambda)$.
\end{proof}

\begin{figure}
\centerline{\input{qq3-dec-tableau-crystal.tex}}
\caption{Crystal graph of $\qq_3$-crystal $\DDTab_3(\lambda)$ for $\lambda=(2,1)$.
In this picture, 
solid blue and red arrows respectively indicate 
$ \xrightarrow{\ 1\ } $ and $ \xrightarrow{\ 2\ } $ edges while dotted green and dashed blue arrows 
indicate $ \xrightarrow{\ 0\ } $ and $ \xrightarrow{\ \bar 1\ }$ edges.
}
\label{tab-fig}
\end{figure}

Let the symbol $\unprime$  also denote the map $\DDTab_n(\lambda)\to\DTab_n(\lambda)$
 which removes the primes from the entries of a primed decomposition tableau.
The following is clear from Lemma~\ref{unprime-lem1} and Theorem~\ref{revrow-thm}.

\begin{lemma}\label{unprime-lem2}
If $T \in \DDTab_n(\lambda)$ and $i \in \{\bar 1,1,2,\dots,n-1\}$ then 
\[ \unprime(e_i(T)) = e_i(\unprime(T)) \quand
\unprime(f_i(T)) = f_i(\unprime(T)).\]
\end{lemma}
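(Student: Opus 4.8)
The plan is to reduce this tableau-level identity to the word-level statement of Lemma~\ref{unprime-lem1} by chasing a commuting square built from $\revrow$. Note first that the hypothesis restricts to $i \in \{\bar1,1,2,\dots,n-1\}$, deliberately excluding $i=0$: for $i=0$ the operators $e_0,f_0$ toggle primes, so they do not commute with $\unprime$ and the claim would fail. For the allowed indices, the key point is that primes are irrelevant to how these operators act.

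First I would record the compatibility of the two meanings of $\unprime$ with the reverse reading word. For every $T \in \DDTab_n(\lambda)$ one has
\[
\revrow(\unprime(T)) = \unprime(\revrow(T)),
\]
where on the left $\unprime$ is the tableau map deleting all primes from entries, and on the right it is the word map $\unprime:(\BB^+_n)^{\otimes m}\to \BB_n^{\otimes m}$. This holds because deleting primes acts entry by entry and hence commutes with the reading procedure. I would also invoke the structure supplied by Theorem~\ref{revrow-thm}: the $\qq_n$-crystal operators on $\DDTab_n(\lambda)$ are defined exactly so that $\revrow$ intertwines them with the operators on $(\BB^+_n)^{\otimes|\lambda|}$, and the restricted $\q_n$-crystal on $\DTab_n(\lambda)$ is defined so that $\revrow$ intertwines its operators with those on $\BB_n^{\otimes|\lambda|}$; moreover a shifted tableau of fixed shape $\lambda$ is recovered uniquely from its reverse reading word.

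The main argument is then a short diagram chase. Fix $i \in \{\bar1,1,2,\dots,n-1\}$, put $w=\revrow(T)$, and assume for concreteness the operator is $e_i$ (the $f_i$ case is identical). By the defining property of the $\q_n$-structure on $\DTab_n(\lambda)$, the tableau $e_i(\unprime(T))$ is characterized by $\revrow(e_i(\unprime(T))) = e_i(\revrow(\unprime(T))) = e_i(\unprime(w))$, and Lemma~\ref{unprime-lem1} rewrites this as $\unprime(e_i(w))$. On the other hand, $\unprime(e_i(T))$ has reverse reading word $\revrow(\unprime(e_i(T))) = \unprime(\revrow(e_i(T))) = \unprime(e_i(w))$. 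Since both $\unprime(e_i(T))$ and $e_i(\unprime(T))$ are shifted tableaux of shape $\lambda$ (membership in $\DTab_n(\lambda)$ is guaranteed by Theorem~\ref{revrow-thm}) with the same reverse reading word, uniqueness forces $\unprime(e_i(T)) = e_i(\unprime(T))$.

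Finally I would dispatch the degenerate cases. When $e_i(w)=0$ we set $e_i(T)=0$, and Lemma~\ref{unprime-lem1} gives $e_i(\unprime(w))=\unprime(e_i(w))=0$, so $e_i(\unprime(T))=0$ as well; thus both sides vanish simultaneously and the identity holds trivially. There is no genuine obstacle here—the whole content is the intertwining $\revrow\circ\unprime = \unprime\circ\revrow$ together with Lemma~\ref{unprime-lem1}—so the only thing requiring care is bookkeeping: confirming via Theorem~\ref{revrow-thm} that all tableaux in sight actually lie in $\DTab_n(\lambda)$ (respectively $\DDTab_n(\lambda)$) of the correct shape, so that the ``unique tableau with prescribed reverse reading word'' principle legitimately applies.
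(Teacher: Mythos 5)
Your argument is correct and is exactly the one the paper intends: the paper dispatches this lemma in one line as ``clear from Lemma~\ref{unprime-lem1} and Theorem~\ref{revrow-thm},'' and your diagram chase through $\revrow\circ\unprime=\unprime\circ\revrow$ simply spells out that reasoning, including the correct handling of the zero cases and the injectivity of $\revrow$ on tableaux of fixed shape.
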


\subsection{Highest and lowest weights}\label{hl-sect}

An important property of many crystals is the existence of unique \defn{highest and lowest weight elements}.
For $\gl_n$-crystals, such elements are defined as 
the sources and sinks in the crystal graph:
  if $\cB$ is a $\gl_n$-crystal then $b\in\cB$ is \defn{highest weight} 
(respectively, \defn{lowest weight}) if  $e_i(b) = 0$
(respectively, $f_i(b) =0$)
 for all $i \in [n-1]$.
 The   definitions of highest weight elements for $\q_n$ and $\qq_n$-crystals from \cite{GJKKK,MT2023} are more technical, and given as follows.

Assume $\cB$ is a crystal  and $i$ is an index.
An \defn{$i$-string} in $\cB$ is a connected component in the subgraph of the crystal graph retaining only the $\xrightarrow{i}$ arrows. Let $\sigma_i : \cB \to \cB$ be the involution that reverses each $i$-string, so that  the first and last elements are swapped, the second and second-to-last elements are swapped, and so on. 
If $i \in [n-1]$ then swapping $\weight(b)_i$ and $\weight(b)_{i+1}$
gives $\weight(\sigma_i(b))$.

Assume $\cB$ is a $\q_n$-crystal.
Define $e_{\bar i}: \cB \to \cB \sqcup\{0\}$ and $  f_{\bar i} : \cB \to \cB \sqcup\{0\}$ for each index $2\leq i <n$ to be the ``twisted'' crystal operators
\be
\label{bar-i-eq} 
\ba
e_{\bar i} &:=  (\sigma_{i-1}   \sigma_i)    \cdots  (\sigma_2 \sigma_3) (\sigma_1 \sigma_2) e_{\bar 1} (\sigma_2 \sigma _1) (\sigma_3 \sigma_2) \cdots (\sigma_i   \sigma_{i-1}),
\\
f_{\bar i} &:=  (\sigma_{i-1}   \sigma_i)    \cdots (\sigma_2 \sigma_3) (\sigma_1 \sigma_2) f_{\bar 1}( \sigma_2 \sigma _1) (\sigma_3 \sigma_2) \cdots (\sigma_i   \sigma_{i-1}),
\ea
\ee
using the convention that $\sigma_i(0) = 0$. Also define
 $\sigma_{w_0} : \cB \to \cB$ by
\be\label{w0-eq} \sigma_{w_0} := (\sigma_1) (\sigma_2\sigma_1)(\sigma_3\sigma_2\sigma_1) \cdots  (\sigma_{n-1} \cdots \sigma_2\sigma_1)\ee
and for each $i \in [n]$ set
 $ e_{\bar i'} := \sigma_{w_0}  f_{\overline{n-i}}  \sigma_{w_0}^{-1}
 $
 and
 $
 f_{\bar i'} := \sigma_{w_0}   e_{\overline{n-i}}   \sigma_{w_0}^{-1}.
 $

\begin{definition}
An element $b\in\cB$ is \defn{$\q_n$-highest weight}
if $e_i (b) =e_{\bar i}(b)= 0$ for all $i \in [n-1]$,
and  \defn{$\q_n$-lowest weight}
if $f_i (b) =f_{\bar i'}(b)= 0$ for all $i \in [n-1]$.
 \end{definition}
 
 Now assume that $\cB$ is $\qq_n$-crystal.
 For each $i \in[n]$ let 
 \be\ba
 e_0^{[i]} &:= \sigma_{i-1} \cdots \sigma_2 \sigma_1 e_0 \sigma_1 \sigma_2 \cdots \sigma_{i-1},
 \\
  f_0^{[i]} &:= \sigma_{i-1} \cdots \sigma_2 \sigma_1 f_0 \sigma_1 \sigma_2 \cdots \sigma_{i-1}.
  \ea
  \ee
 
 \begin{definition}
An element $b$ in a $\qq_n$-crystal $\cB$ is \defn{$\qq_n$-highest weight}
if it is $\q_n$-highest weight with $e_0^{[i]}(b)= 0$ for all $i \in [n]$,
and \defn{$\qq_n$-lowest weight}
if it is $\q_n$-lowest weight with $f_0^{[i]}(b)= 0$ for all $i \in [n]$.
\end{definition}

\begin{remark}
For the special class of \defn{normal crystals} that will be discussed in Section~\ref{normal-sect},
 we  can characterize highest and lowest weight elements in a simpler way directly in terms of the relevant crystals graphs. See 
 Proposition~\ref{ranked-prop}.
\end{remark}

Let $\lambda$ be a strict partition with $\ell(\lambda)=k$.
The \defn{first border strip} of a shifted diagram $\SD_\lambda$
is the minimal subset $S$ containing $(1,\lambda_1)$ such that 
if $(i,j) \in S$ and $i\neq j$, then either
$(i+1,j)\in S$, or 
$(i,j-1) \in S$ when $(i+1,j)\notin \SD_\lambda$. 
Let $\SD_\lambda^{(1)}$ be the first border strip of $\SD_\lambda$.
The set difference $\SD_\lambda- \SD_\lambda^{(1)}$ is either empty when $k=1$
or equal to $\SD_\mu$ for a strict partition $\mu$ with $\ell(\mu) = k-1$.
For $i\in[k-1]$ let $\SD_\lambda^{(i+1)}$ be the first border strip of $\SD_\lambda - (\SD_\lambda^{(1)} \sqcup \cdots \sqcup \SD_\lambda^{(i)})$.
Finally, let $\Thighest_\lambda$ be the shifted tableau of shape $\lambda$ with all $i$ entries in $\SD_\lambda^{(i)}$. 

\begin{example}If $\lambda = (6,4,2,1)$ then
$\Thighest_{\lambda} = \ytab{
\none & \none & \none & 1 \\
\none & \none & 2 & 1 \\ 
\none & 3 & 2 & 1 & 1 \\
4 & 3 & 2 & 2 & 1 & 1
}
$.
\end{example}

Let $\Tlowest_\lambda$ be the shifted tableau of shape $\lambda$
whose $i$th row has all entries   $n+1-i$.
Form $\TTlowest_\lambda$ by adding a prime to the last entry in each row of $\Tlowest_\lambda$.

\begin{example} If $n=7$ and $\lambda = (6,4,2,1)$ then
\[\Tlowest_{\lambda} = \ytab{
\none & \none & \none & 4 \\
\none & \none & 5 & 5 \\ 
\none & 6 & 6 & 6 & 6 \\
7 & 7 & 7 & 7 & 7 & 7
}
\quand
\TTlowest_{\lambda} = \ytab{
\none & \none & \none & 4' \\
\none & \none & 5 & 5' \\ 
\none & 6 & 6 & 6 & 6' \\
7 & 7 & 7 & 7 & 7 & 7'
}
.\]
\end{example}

It is known that 
$\Thighest_\lambda$ and $\Tlowest_\lambda$
are the unique $\q_n$-highest weight and $\q_n$-lowest weight elements 
of $\DTab_n(\lambda)$
 \cite[Thm.~2.5(b)]{GJKKK}.
This property extends to the $\qq_n$-crystal $\DDTab_n(\lambda)$ as follows:

\begin{theorem}\label{highest-thm}
Suppose $\lambda$ is a strict partition with at most $n$ parts.
\ben
\item[(a)]   $\Thighest_\lambda$ is
the unique $\qq_n$-highest weight element of 
 $\DDTab_n(\lambda)$.
 
 \item[(b)]   $\TTlowest_\lambda$ is
the unique $\qq_n$-lowest weight element of 
 $\DDTab_n(\lambda)$.

 \een
 \end{theorem}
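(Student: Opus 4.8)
The plan is to separate the two conditions defining $\qq_n$-highest (resp.\ lowest) weight --- the underlying $\q_n$-condition and the extra conditions $e_0^{[i]}=0$ (resp.\ $f_0^{[i]}=0$) --- and to handle the first by transporting the known $\q_n$-result of \cite{GJKKK} along $\unprime$. By Lemma~\ref{unprime-lem2}, $\unprime$ commutes with $e_i,f_i$ for $i\in\{\bar 1,1,\dots,n-1\}$, and since it sends nonzero tableaux to nonzero tableaux this yields $\varepsilon_i(T)=\varepsilon_i(\unprime T)$ and $\varphi_i(T)=\varphi_i(\unprime T)$. Hence $\unprime$ carries each $i$-string isomorphically onto an $i$-string, so it also commutes with every $\sigma_i$, and therefore with the twisted operators $e_{\bar i},f_{\bar i}$, with $\sigma_{w_0}$, and with $e_{\bar i'},f_{\bar i'}$. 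It follows that $T$ is $\q_n$-highest (resp.\ lowest) weight in $\DDTab_n(\lambda)$ if and only if $\unprime(T)$ is $\q_n$-highest (resp.\ lowest) weight in $\DTab_n(\lambda)$, which by \cite[Thm.~2.5(b)]{GJKKK} holds exactly when $\unprime(T)=\Thighest_\lambda$ (resp.\ $\Tlowest_\lambda$). Thus the $\q_n$-highest weight elements of $\DDTab_n(\lambda)$ are precisely the tableaux obtained from $\Thighest_\lambda$ by priming the middle element (here the rightmost $1$) of an arbitrary subset of rows, and likewise for $\Tlowest_\lambda$.

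For part (a) it then remains to see which of these $2^{\ell(\lambda)}$ tableaux also satisfy $e_0^{[i]}=0$ for all $i\in[n]$. That $\Thighest_\lambda$ does is immediate: it has no primed entries, and each $\sigma_j$ with $j\in[n-1]$ is a composition of the operators $e_j,f_j$, which alter values but never introduce a prime; hence $\sigma_1\cdots\sigma_{i-1}(\Thighest_\lambda)$ is prime-free, so $e_0$ (which requires a $1'$) annihilates it and $e_0^{[i]}(\Thighest_\lambda)=\sigma_{i-1}\cdots\sigma_1\,e_0\bigl(\sigma_1\cdots\sigma_{i-1}(\Thighest_\lambda)\bigr)=0$.

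The crux is to show that priming any nonempty set of rows breaks this, and I would do so via the following toggle description, indexing rows from the bottom. \emph{Claim: for $T$ with $\unprime(T)=\Thighest_\lambda$ and $i\le\ell(\lambda)$, the operator $e_0^{[i]}$ is nonzero precisely when row $i$ of $T$ is primed, in which case $e_0^{[i]}(T)$ simply unprimes that entry; moreover $e_0^{[i]}=0$ for $i>\ell(\lambda)$.} To prove the claim I would compute $\sigma_1\sigma_2\cdots\sigma_{i-1}$ on the $\gl_n$-highest weight element $T$: on such an element these string reversals implement the cyclic weight permutation $(1\,2\,\cdots\,i)$ and, concretely, raise the value of every $1$ lying in rows $1,\dots,i-1$ while fixing the $1$'s in rows $i,\dots,\ell(\lambda)$ (the number of the latter being exactly $\lambda_i$). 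Since $\revrow$ lists the rows from the bottom upward, each read right to left, and the middle element of a row is its rightmost $1$, the middle element of row $i$ is the first value-$1$ box occurring in $\revrow$ among rows $\ge i$. Consequently $e_0$ finds this box first: it lifts it when it is a $1'$ (row $i$ primed) and is blocked, giving $0$, when it is an unprimed $1$ (row $i$ unprimed); the primes on other rows ride along with their boxes and are never touched. Conjugating back by $\sigma_{i-1}\cdots\sigma_1$ gives the claim. Granting it, $e_0^{[i]}(T)=0$ for all $i$ forces every row of $T$ to be unprimed, i.e.\ $T=\Thighest_\lambda$, establishing uniqueness in (a). I expect this explicit analysis of $\sigma_1\cdots\sigma_{i-1}$ on highest weight tableaux, and the bookkeeping of how primes on middle elements are transported, to be the main obstacle; a clean alternative is to import a direct signature rule for $e_0^{[i]}$ and $f_0^{[i]}$ on primed words from \cite{MT2023} and apply it to $w=\revrow(T)$.

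Part (b) follows by the mirror argument. Here the $\q_n$-lowest weight elements are the primings of $\Tlowest_\lambda$, whose $r$th row from the bottom is constant equal to $n+1-r$ with middle element its last entry. The analogous toggle lemma states that $f_0^{[i]}$ adds a prime to the unique row whose entries equal $i$ (equivalently, the row with diagonal value $i$) when that row is unprimed, and vanishes otherwise; in particular the relevant operators are indexed injectively by the rows. Hence $\TTlowest_\lambda$, in which every row's last entry is already primed, has $f_0^{[i]}(\TTlowest_\lambda)=0$ for all $i$ and is $\qq_n$-lowest weight, whereas any proper subset of primed rows leaves some row's middle element unprimed and thus admits a nonzero $f_0^{[i]}$. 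This pins down $\TTlowest_\lambda$ as the unique $\qq_n$-lowest weight element. (Alternatively, (b) can be deduced from (a) by conjugating with $\sigma_{w_0}$, which interchanges the highest- and lowest-weight conditions.)
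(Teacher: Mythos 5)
Your proposal follows essentially the same route as the paper's proof: reduce via $\unprime$ and the $\q_n$-result of Grantcharov et al.\ to the set of primings of $\Thighest_\lambda$ and $\TTlowest_\lambda$'s unprimed version, observe that $\Thighest_\lambda$ (having no primes) and $\TTlowest_\lambda$ (having the maximal number) are automatically killed by all $e_0^{[i]}$ and $f_0^{[i]}$, and then show that priming (resp.\ failing to prime) any row forces some $e_0^{[i]}$ (resp.\ $f_0^{[i]}$) to act nontrivially by explicitly computing $\sigma_1\cdots\sigma_{i-1}$ on these tableaux. The ``toggle'' claim you flag as the main obstacle is precisely what the paper verifies, via a row-by-row analysis of how each $\sigma_j$ moves entries along border strips, so your outline is sound and matches the paper's argument in structure and in every key step.
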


  \begin{proof}
Lemma~\ref{unprime-lem2} implies that
$\unprime$ sends $\qq_n$-highest and  lowest weight elements in $\DDTab_n(\lambda)$
to $\q_n$-highest and lowest weight elements in $\DTab_n(\lambda)$.
Therefore
$T \in \DDTab_n(\lambda)$
 is a $\qq_n$-highest weight element if and only if 
 \be\label{useful-eq1}
 \unprime(T) = \Thighest_\lambda\quand e_{0}^{[i]}(T)  = 0\text{ for all $i \in [n]$}.
 \ee
Likewise, $T \in \DDTab_n(\lambda)$ is a $\qq_n$-lowest  weight element
  if and only if 
 \be\label{useful-eq2}
 \unprime(T) =\Tlowest_\lambda\quand f_{0}^{[i]}(T)  = 0\text{ for all $i \in [n]$}.
 \ee

We first prove part (a). Fix $i \in [n-1]$, $j \in \{0, 1, 2, \dots, \ell(\lambda)-1\}$, 
and $T\in \DDTab_n(\lambda)$. 
Suppose the border strip $\SD_\lambda^{(i)}$ is exactly the set of boxes 
in $T$ containing $i$ or $i'$, and
the part of the border strip $\SD_\lambda^{(i+1)}$ after the first $j$ rows
is exactly the set of boxes 
in $T$ containing $i+1$ or $i+1'$.
Then in $\revrow(T)$, each box $(x,y)$ containing $i+1$ or $i+1'$ is paired with the box $(x+1, y+1)$, which contains $i$ or $i'$,
 while all boxes containing $i$ or $i'$ in the first $j+1$ rows are unpaired.
Therefore $\sigma_i$ acts on $T$ by changing every $i$ and $i'$ in the first $j+1$ rows of the border strip $\SD_\lambda^{(i)}$ to $i+1$ and $i+1'$ respectively.
For example, \[\sigma_1\(\ytab{
\none & \none & \none & 1' \\
\none & \none & 2 & 1 \\ 
\none & 3 & 2 & 1 & 1' \\
4 & 3 & 3 & 3 & 1 & 1
}\) 
= 
\ytab{
\none & \none & \none & 1' \\
\none & \none & 2 & 1 \\ 
\none & 3 & 2 & 2 & 2' \\
4 & 3 & 3 & 3 & 2 & 2
}.
\]
This example belongs to the case when $j=1$.

Now suppose $T$ has $\unprime(T) = \Thighest_\lambda$
and $T_{i, \lambda_i+i-1} = 1'$. By the observations in the previous paragraph with $j=0$,  $\sigma_{i-1}(T)$ is formed from $T$ by adding one to every $i-1$ or $i-1'$ entry in the first row.
By the same observations with $j=1$,
$\sigma_{i-2}\sigma_{i-1}(T)$ is formed from $\sigma_{i-1}(T)$ by adding one to every $i-2$ or $(i-2)'$ entry in the first two rows.
Continuing in this way for $j = 2, 3, \dots, i-2$, we see that box $(i,\lambda_i+i-1)$ in $\sigma_1\sigma_2\cdots\sigma_{i-1}(T)$
has entry $1'$ and this box contributes the first $1$ or $1'$ to the reverse row reading word. 
Therefore $e_0$ acts on $\sigma_1\sigma_2\cdots\sigma_{i-1}(T)$ by changing this $1'$ in to $1$,
so $e_0^{[i]}(T) \neq 0$ and $T$ is not $\qq_n$-highest weight.
See Figure~\ref{e0i-fig} for an example of the successive steps in this computation.

\begin{figure}
\[
\ba 
T &= \ytab{
\none & \none & \none & 1' \\
\none & \none & 2 & 1 \\ 
\none & 3 & 2 & 1 & 1 \\
4 & 3 & 2 & 2 & 1 & 1
}
\\
\sigma_3(T) &= \ytab{
\none & \none & \none & 1' \\
\none & \none & 2 & 1 \\ 
\none & 3 & 2 & 1 & 1 \\
4 & 4 & 2 & 2 & 1 & 1
}
\\
\quad \sigma_2\sigma_3(T) &= \ytab{
\none & \none & \none & 1' \\
\none & \none & 2 & 1 \\ 
\none & 3 & 3 & 1 & 1 \\
4 & 4 & 3 & 3 & 1 & 1
} \\ \sigma_1\sigma_2\sigma_3(T) &= \ytab{
\none & \none & \none & 1' \\
\none & \none & 2 & 2 \\ 
\none & 3 & 3 & 2 & 2 \\
4 & 4 & 3 & 3 & 2 & 2
}
\\
e_0\sigma_1\sigma_2\sigma_3(T) &= \ytab{
\none & \none & \none & 1 \\
\none & \none & 2 & 2 \\ 
\none & 3 & 3 & 2 & 2 \\
4 & 4 & 3 & 3 & 2 & 2
} \neq 0.
\ea
\]
\caption{Intermediate steps showing $e_0^{[i]}(T)\neq 0$ for $i = 4$}\label{e0i-fig}
\end{figure}

The only remaining candidate for a $\qq_n$-highest weight element is $\Thighest_\lambda$. 
We do have   $e_0^{[i]}(\Thighest_\lambda) = 0$ for all $i \in [n]$, since $e_0^{[i]}$ removes a prime from one box if it does not act as zero,
and $\Thighest_\lambda$ already has no primes.

Now we prove part (b). Fix $i \in [n-1]$ and $T\in \DDTab_n(\lambda)$.
Suppose that the $(n-i+1)$-th row of $T$ is exactly the set of all boxes in $T$ containing $i$ or $i'$.
Assume additionally that all entries in the $(n-i)$-th row of $T$ are $i+1$ or $i+1'$
and the only other boxes of $T$ with these entries are in the first $n-i$ rows of the first border strip $\SD_\lambda^{(1)}$.

Then, in $\revrow(T)$, each box containing $i+1$ in the first $\lambda_{n-i+1}$ boxes in the $(n-i)$-th row is paired with a box in the $(n-i+1)$-th row, which contains $i$ or $i'$. The remaining boxes containing $i+1$ or $i+1'$ in the first $(n-i)$ rows are left unpaired.
Therefore $\sigma_i$ acts on $T$ by changing every $i+1$ or $i+1'$ in the last $\lambda_{n-i} - \lambda_{n-i+1}$ boxes of the $(n-i)$-th row and in the boxes of the first $(n-i)$ rows of $\SD_\lambda^{(1)}$ to $i$ or $i'$, respectively. 
For example,  we have 
\[ \sigma_6\(\ytab{
\none & \none & \none & 4 \\
\none & \none & 5 & 5' \\ 
\none & 6 & 6 & 6 & 6 \\
7 & 7 & 7 & 7 & 7 & 7'
}\)
= 
\ytab{
\none & \none & \none & 4 \\
\none & \none & 5 & 5' \\ 
\none & 6 & 6 & 6 & 6 \\
7 & 7 & 7 & 7 & 6 & 6'
}
\]
which corresponds to the case when $n=7$ and $i=6$ so $n+1-i = 2$,
as well as
\[
\sigma_4\(\ytab{
\none & \none & \none & 4 \\
\none & \none & 5 & 5' \\ 
\none & 6 & 6 & 5 & 5 \\
7 & 7 & 7 & 7 & 7 & 7'
}\)
=
\ytab{
\none & \none & \none & 4 \\
\none & \none & 5 & 4' \\ 
\none & 6 & 6 & 4 & 4 \\
7 & 7 & 7 & 7 &7 & 7'
}
\]
which corresponds to the case when $n = 7$ and $i=4$ so $n+1-i = 4$.

Now suppose $T$ has $\unprime(T) = \Tlowest_\lambda$
and $T_{n-i+1, \lambda_{n-i+1}+n-i} = i$. By the observations in the previous paragraph, $\sigma_{i-1}(T)$ is formed from $T$ by subtracting one from every $i$ or $i'$ entry located in the last $\lambda_{n-i+1} - \lambda_{n-i+2}$ boxes of the $(n-i+1)$th row, which are also located in $\SD_\lambda^{(1)}$.
By the same observations.
$\sigma_{i-2}\sigma_{i-1}(T)$ is formed from $\sigma_{i-1}(T)$ by subtracting one from every $i-1$ or $(i-1)'$ entry in the last $\lambda_{n-i+1} - \lambda_{n-i+2}$ boxes of the $(n-i+1)$th row, and the last $\lambda_{n-i+2} - \lambda_{n-i+3}$ boxes of the $(n-i+2)$th row.
Continuing this process until $\sigma_{n-\ell(\lambda)+1}$, we observe that $U = \sigma_{n-\ell(\lambda)+1}\sigma_{n-\ell(\lambda)+2} \dots \sigma_{i-2} \sigma_{i-1} (T)$ is formed from $T$ by changing the part of the border strip $\SD_\lambda^{(1)}$ after the first $n-i$ rows to $n-\ell(\lambda)+1$ or $n-\ell(\lambda)+1'$, while keepings the locations of the primes the same as in $T$. In particular, the box $(n-i+1, \lambda_{n-i+1}+n-i)$ now contains the entry $n-\ell(\lambda)+1$.

Notice that $U$ does not have any entry equal to $n-\ell(\lambda)$ or $(n-\ell(\lambda))'$, and the boxes in $\SD_\lambda^{(1)}$ after the first $n-i$ rows consist of all the boxes of $U$ with entries $n-\ell(\lambda)+1$ or $n-\ell(\lambda)+1'$, and hence these entries are all $(n-\ell(\lambda))$-unpaired. Therefore by successively applying $\sigma_{n-\ell(\lambda)}$, $\sigma_{n-\ell(\lambda)-1}$, $\dots$, $\sigma_1$, we can conclude that $\sigma_1\sigma_2 \dots \sigma_{i-1}(T)$ is formed from $T$ by changing the part of the border strip $\SD_\lambda^{(1)}$ after the first $n-i$ rows to $1$ or $1'$, while maintaining the locations of the prime boxes. 
In particular, the box $(n-i+1, \lambda_{n-i+1}+n-i)$ in $\sigma_1\sigma_2 \dots \sigma_{i-1}(T)$ contains the entry $1$, and this box contributes the first $1$ or $1'$ to the reverse row reading word. 
Therefore applying $f_0$ changes this box to $1'$, and hence 
$ f_0^{[i]}(T) \neq 0$, so $T$ is not $\qq_n$-lowest weight.
See Figure~\ref{f0i-fig} for an example of the successive steps in this computation.

\begin{figure}
\[
\ba
T &=  \ytab{
\none & \none & \none & 4' \\
\none & \none & 5 & 5' \\ 
\none & 6 & 6 & 6 & 6' \\
7 & 7 & 7 & 7 & 7 & 7
}
\\
\sigma_6(T) &= \ytab{
\none & \none & \none & 4' \\
\none & \none & 5 & 5' \\ 
\none & 6 & 6 & 6 & 6' \\
7 & 7 & 7 & 7 & 6 & 6
} 
\\
\sigma_5\sigma_6(T) &= \ytab{
\none & \none & \none & 4' \\
\none & \none & 5 & 5' \\ 
\none & 6 & 6 & 5 & 5' \\
7 & 7 & 7 & 7 & 5 & 5
} 
\\ 
\sigma_4\sigma_5\sigma_6(T) &= \ytab{
\none & \none & \none & 4' \\
\none & \none & 5 & 4' \\ 
\none & 6 & 6 & 4 & 4' \\
7 & 7 & 7 & 7 & 4 & 4
} 
\ea
\qquad
\ba
\sigma_3\sigma_4\sigma_5\sigma_6(T) &= \ytab{
\none & \none & \none & 3' \\
\none & \none & 5 & 3' \\ 
\none & 6 & 6 & 3 & 3' \\
7 & 7 & 7 & 7 & 3 & 3
} 
\\
 \sigma_2 \sigma_3\sigma_4\sigma_5\sigma_6(T) &= \ytab{
\none & \none & \none & 2' \\
\none & \none & 5 & 2' \\ 
\none & 6 & 6 & 2 & 2' \\
7 & 7 & 7 & 7 & 2 & 2
}
\\
\sigma_1\sigma_2 \sigma_3\sigma_4\sigma_5\sigma_6(T) &= \ytab{
\none & \none & \none & 1' \\
\none & \none & 5 & 1' \\ 
\none & 6 & 6 & 1 & 1' \\
7 & 7 & 7 & 7 & 1 & 1
}
\\
f_0\sigma_1\sigma_2 \sigma_3\sigma_4\sigma_5\sigma_6(T) &= \ytab{
\none & \none & \none & 1' \\
\none & \none & 5 & 1' \\ 
\none & 6 & 6 & 1 & 1' \\
7 & 7 & 7 & 7 & 1 & 1'
}\neq 0.
\ea
\]
\caption{Intermediate steps showing $f_0^{[i]}(T)\neq 0$ for $i = 7$}\label{f0i-fig}
\end{figure}

The only remaining candidate for a $\qq_n$-lowest weight element is $\TTlowest_\lambda$. 
We do have   $f_0^{[i]}(\TTlowest_\lambda) = 0$ for all $i \in [n]$, since $f_0^{[i]}$ adds a 
prime to one box if it does not act as zero,
and $\TTlowest_\lambda$ already has the maximum number of primed entries for an element of $\DDTab_n(\lambda)$.
 \end{proof}

\subsection{Decomposition insertion}\label{ins-sect}

This section introduces a ``primed'' extension of Grantcharov et al.'s  insertion scheme from \cite[\S3]{GJKKK}.
This algorithm embeds each tensor power of the standard $\qq_n$-crystal in a disjoint union of decomposition tableau crystals.

\begin{definition}\label{decomp-insert-algo}
Suppose $T$ is a primed decomposition tableau and $x \in \ZZ\sqcup\ZZ'$.
Let   $x \toQQ T$ be the tableau formed
by the following insertion procedure. 
\bei
\item On step $i$ of the algorithm, a number $a_i$ is inserted into row $i$ of $T$, starting with $a_1:=x$ inserted into the first row.

\item To compute the insertion 
 on step $i$, set $a = \lceil a_i \rceil$ and remove any prime from the middle element $m_i$ of row $i$ (if the row is nonempty). The (unprimed) number $a$ is added to the end of the (now unprimed) row if this creates a hook word; otherwise, $a$ replaces the leftmost entry $b$ from the increasing part of the row with $b\geq a$,  and then $b$ replaces the leftmost entry $c$ from the weakly decreasing part of the row with $c < b$.
 
\item Now we must decide the value of $a_{i+1}$ and whether to add a prime to the middle element of row $i$.
There are two cases:
\ben
\item[(1)] Suppose row $i$ was initially empty, or the location of the middle element has moved (necessarily to the right). If $a_i \in \ZZ'$ then add a prime to the new middle element.
If a box was added to the end of row, then the algorithm
halts at this step and we say the insertion is \defn{even} if 
row $i$ was initially empty or $m_i \in \ZZ$, and \defn{odd} if 
row $i$ was not initially empty and $m_i \in \ZZ'$.
Otherwise,
we set $a_{i+1} =c$ when $m_i \in \ZZ$ and   $a_{i+1} =c'$ when $m_i \in \ZZ'$. For example:
 \[
 \ytab{ 4 & 2 & 2& 1^{\circ} & 3} \leftarrow 1^\bullet =a_i \quad \leadsto\quad 
 a_{i+1} = 2^\circ \leftarrow \ytab{ 4 & 3 & 2& 1 & 1^\bullet}.
 \]
 Here $\circ$ and $\bullet$ indicate arbitrary, unspecified choice of primes.

\item[(2)] Suppose   the location of the middle element in row $i$ has not changed.
If $m_i \in \ZZ'$ then  add a prime to the middle element.
If a box was added to the end of the row, then the algorithm
halts at this step and we say the insertion is \defn{even} if $a_i \in \ZZ$ and \defn{odd} if $a_i \in \ZZ'$.
Otherwise,
 set $a_{i+1} =c$ when $a_i \in \ZZ$ and   $a_{i+1} =c'$ when $a_i \in \ZZ'$. For example:
 \[
 \ytab{ 4 & 2 & 2& 1^{\circ} & 3} \leftarrow 3^\bullet =a_i \quad \leadsto\quad 
 a_{i+1} = 2^\bullet \leftarrow \ytab{ 4 & 3 & 2& 1^\circ & 3}.
 \]
\een
\eei
\end{definition}

\begin{remark}\label{simpler-remk}
If $T$ is an unprimed decomposition tableau and $x\in \ZZ$, then 
$x \toQ T$ is obtained from the following simpler procedure, which is described in both \cite[Def.~3.4]{GJKKK} and \cite[Def.~2.9]{CNO}.
On each step, a number $a$ is inserted into a row of $T$, starting with $x$ into the first row.
If adding $a$ to the end of the row yields a hook word, then we do this and halt.
Otherwise, $a$ replaces the   leftmost element $b$ of the increasing part  of the row with $b \geq a$,
then $b$ replaces the leftmost element $c$ of the weakly decreasing part of the row with $c < b$, and then we insert $c$ into the next row.
\end{remark}
 
\begin{definition}
Given any primed word $w=w_m\cdots w_2w_1$, form 
\[\Pqq(w) :=   w_m \toQQ (\cdots\toQQ  (w_2 \toQQ (w_1 \toQQ \emptyset))\cdots )\]
by inserting the letters of $w$ read right to left into the empty tableau $\emptyset$.
Let $\Qqq(w)$ be the tableau with the same shape as $\Pqq(w)$
that has $i$ (respectively, $i'$) in the box added by $w_i \toQQ$ if this insertion is even (respectively, odd).
\end{definition}

\begin{example}
For $w = 
4'4332'3'32'1'$
we have
\[
\Pqq(w) = \ytab{
\none & \none & 1'  \\
\none & 2 & 2' & 3 \\
4 & 3 & 3 & 3 & 4 
}
\quand
\Qqq(w) = \ytab{
\none & \none & 7 \\
\none & 4 & 5' & 9' \\
1 & 2' & 3 &6 & 8 
}.
\]
\end{example}

The following is easy to check by induction on word length:

\begin{lemma}\label{unprime-lem3} If $w$ is a primed word 
then $\unprime(\Pqq(w)) = \Pqq(\unprime(w))$
and $\unprime(\Qqq(w)) = \Qqq(\unprime(w))$.
\end{lemma}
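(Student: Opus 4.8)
The plan is to isolate a single-insertion compatibility statement and then induct on the length of $w$, as the remark accompanying the lemma anticipates. The statement I would establish first is: for every primed decomposition tableau $T$ and every $x \in \ZZ \sqcup \ZZ'$,
\[
\unprime(x \toQQ T) = \lceil x\rceil \toQQ \unprime(T),
\]
and moreover the two insertions $x \toQQ T$ and $\lceil x\rceil \toQQ \unprime(T)$ visit the same sequence of rows and halt by adding a box in the same position. The second assertion is what I will need in order to handle the recording tableaux $\Qqq$ as well as the insertion tableaux $\Pqq$.

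The crucial structural observation is that in a primed decomposition tableau the only box of each row that may carry a prime is its middle element. Hence for each row $\rho$ the unprimed row $\unprime(\rho)$ is exactly the row obtained by deleting the prime on its middle element, which is precisely the word that Definition~\ref{decomp-insert-algo} feeds into the numeric bumping step: it sets $a=\lceil a_i\rceil$ and strips the prime from $m_i$ before inserting. Comparing this numeric step with the single rule of Remark~\ref{simpler-remk} (valid since $\unprime(T)$ is unprimed and $\lceil x\rceil\in\ZZ$), I would observe that every decision — whether appending $a$ to the end yields a hook word, which entry $b\ge a$ of the increasing part is replaced, which entry $c<b$ of the weakly decreasing part is replaced — depends only on the unprimed content of the row and on $\lceil a_i\rceil$. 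Consequently the value carried to the next row satisfies $\lceil a_{i+1}\rceil = c$, matching the value $c$ passed on by the unprimed procedure, so the two processes proceed in lockstep row by row. The extra bookkeeping in cases (1) and (2) of Definition~\ref{decomp-insert-algo} — priming a new middle element, recording even/odd — affects only which entries of $x\toQQ T$ end up primed, all of which $\unprime$ discards; it never changes which box is added or which rows are visited. This proves the single-insertion claim.

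Deducing the lemma is then routine induction on word length. Writing $w=w_m\cdots w_1$ and $w^- = w_{m-1}\cdots w_1$, we have $\Pqq(w)=w_m\toQQ\Pqq(w^-)$ with $\Pqq(w^-)$ a primed decomposition tableau, so the claim gives
\[
\unprime(\Pqq(w)) = \lceil w_m\rceil \toQQ \unprime(\Pqq(w^-)) = \lceil w_m\rceil \toQQ \Pqq(\unprime(w^-)) = \Pqq(\unprime(w)),
\]
using the inductive hypothesis in the middle equality. For the recording tableaux, note that when the inserted word is unprimed every insertion in Definition~\ref{decomp-insert-algo} is even (in case (1) the row is empty or $m_i\in\ZZ$, and in case (2) $a_i\in\ZZ$), so $\Qqq(\unprime(w))$ carries the unprimed label $i$ in the box added at step $i$; meanwhile $\unprime(\Qqq(w))$ also carries $i$ there, since $\unprime$ sends both $i$ and $i'$ to $i$, and the single-insertion claim guarantees step $i$ adds its box in the same position in $\Pqq(\unprime(w))$ as in $\Pqq(w)$. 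Hence the two recording tableaux agree.

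The main obstacle is the case analysis inside the single-insertion claim: one must verify that the branch taken by Definition~\ref{decomp-insert-algo} (append-and-halt versus bump-and-continue, and which of subcases (1)/(2) applies) always coincides with the branch taken by the rule of Remark~\ref{simpler-remk}. This is exactly where the observation that primes sit only on middle elements does the work, since it makes the numeric comparisons governing the branching insensitive to the primes; once that is in place, everything else is bookkeeping.
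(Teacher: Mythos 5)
Your proof is correct and follows exactly the route the paper has in mind: the paper dismisses this lemma with ``easy to check by induction on word length,'' and your single-insertion compatibility claim $\unprime(x \toQQ T) = \lceil x\rceil \toQQ \unprime(T)$ (together with the observation that the bumping decisions in Definition~\ref{decomp-insert-algo} depend only on $\lceil a_i\rceil$ and the unprimed row, so the two processes run in lockstep and unprimed insertions are always even) is precisely the content being left to the reader. Nothing to change.
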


A shifted tableau with $n$ boxes is \defn{standard}
if its rows and columns are strictly increasing and it has exactly one entry equal to $i'$ or $i$
for each $i \in [n]$.

\begin{remark}\label{r-rmk}
Let $w^\r$ be the reverse of a word $w$.
On unprimed words, the map $w \mapsto (\Pqq(w^\r),\Qqq(w^\r))$
coincides with \cite[Def.~4.1]{GJKKK} and gives a bijection
 to pairs 
 $(P,Q)$ where  $P$ is an (unprimed) decomposition tableau and $Q$ is a standard shifted tableau of the same shape with no  primed entries.
 This map is called \defn{reverse semistandard Kra\'skiewicz insertion} in \cite{CNO}.
\end{remark}

\begin{proposition}\label{qq-bij-thm}
The map $w\mapsto (\Pqq(w),\Qqq(w))$ is a bijection from the set of 
all words with letters in $\{1'<1<2'<2<\dots\}$
 to the set of pairs $(P,Q)$ of shifted tableaux with the same shape
such that $P$ is a primed decomposition tableau and $Q$ is a standard shifted tableau with no  primed diagonal entries.
\end{proposition}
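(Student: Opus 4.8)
The plan is to factor the map through its ``unprimed shadow'' and then account for the primes by a conservation law. Write $\Phi(w) := (\Pqq(w),\Qqq(w))$ and let $\Phi_0$ denote the same assignment restricted to unprimed words. By Lemma~\ref{unprime-lem3} we have $\unprime\,\Pqq(w) = \Pqq(\unprime(w))$ and $\unprime\,\Qqq(w) = \Qqq(\unprime(w))$, so $\Phi$ fits into a square with vertical maps $\unprime$ and bottom map $\Phi_0$ that commutes. By Remark~\ref{r-rmk} the map $w\mapsto(\Pqq(w^{\r}),\Qqq(w^{\r}))$ is a bijection onto pairs $(P_0,Q_0)$ of the same shape with $P_0\in\DTab(\lambda)$ and $Q_0$ a standard shifted tableau having no primed entries; precomposing with the reversal involution $u\mapsto u^{\r}$ (a bijection on words) shows $\Phi_0$ itself is a bijection onto this same target. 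First I would confirm that $\Phi$ genuinely lands in the claimed target: the unprimed parts $\unprime(P)=P_0$ and $\unprime(Q)=Q_0$ are valid by $\Phi_0$; every prime produced by the insertion is placed on a middle element, so $P\in\DDTab(\lambda)$; and each diagonal box of $Q$ is the box created when its row is first populated, i.e.\ by an insertion into an empty row, which is always \emph{even}, so $Q$ has no primed diagonal entries. Hence $\Phi$ carries each fiber $\unprime^{-1}(u)$ into the fiber $\unprime^{-1}(\Phi_0(u))$.

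Next I would reduce the statement to a fiber-wise bijection. Since $\Phi_0$ is a bijection, $\Phi$ is the claimed bijection exactly when, for every unprimed word $u$ with $m:=|u|$ letters and associated shape $\lambda$ of length $\ell:=\ell(\lambda)$, it restricts to a bijection $\unprime^{-1}(u)\to\unprime^{-1}(\Phi_0(u))$. Both fibers are finite of equal size. A primed word with $\unprime(w)=u$ is a free choice of a prime at each of the $m$ positions, giving $2^{m}$ words. A pair with the prescribed unprimed shadow is a free choice of which of the $\ell$ middle elements of $P_0$ to prime (this is precisely the definition of $\DDTab(\lambda)$) together with which of the $m-\ell$ off-diagonal boxes of $Q_0$ to prime, the diagonal boxes being forced unprimed; priming off-diagonal boxes preserves standardness because all ceilings are distinct. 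This gives $2^{\ell}\cdot 2^{m-\ell}=2^{m}$. Thus it suffices to prove $\Phi$ is injective on each such fiber.

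For injectivity I would use a conservation law read directly off Definition~\ref{decomp-insert-algo}. As a bumping path descends through a row, the number of primes arriving --- one possibly on the incoming letter $a_i$ and one possibly on that row's middle element --- equals the number leaving: case~(2) keeps the middle element's prime in place and passes the letter's prime down, while case~(1) moves the letter's prime onto the new middle element and passes the old middle element's prime down. At a halting step the (at most two) incoming primes are deposited on the newly added box of $P$ and on the new box of $Q$, the latter being exactly the \emph{odd} marker. In every subcase the local transformation of the pair (middle-element prime, in-flight prime) is an invertible operation over $\FF_2$. The inverse of $\Phi_0$ supplies the unprimed reverse insertion, reconstructing $u$ together with the full sequence of elementary moves and recording at each step which of case~(1)/(2)/halt occurred. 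Running this sequence backwards and inverting each local prime operation recovers the prime of every letter of $w$ from the primes recorded in $(P,Q)$, so $\Phi$ is injective on the fiber.

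Combining the matching fiber cardinalities with fiber-wise injectivity gives fiber-wise bijectivity, and assembling over all $u$ yields the global bijection. The main obstacle is the third step: one must check carefully, across all subcases of Definition~\ref{decomp-insert-algo} (middle element moving or fixed, primed or not, halting into an empty versus a nonempty row), that the prime transport really is a composition of reversible local moves, so that the backward pass is well-defined and single-valued. Everything else is bookkeeping layered on top of the already-established unprimed bijection.
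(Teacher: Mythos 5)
Your proposal is correct and follows the same overall architecture as the paper's proof: both use Lemma~\ref{unprime-lem3} and Remark~\ref{r-rmk} to reduce to the fiber $\unprime^{-1}(v)$ over a fixed unprimed word, and both observe that the two fibers have the same cardinality $2^m$ (with $\ell$ free middle-element primes in $P$ and $m-\ell$ free off-diagonal primes in $Q$). The difference lies in how fiber-wise injectivity is established. The paper packages it abstractly: it endows both fibers with $\FF_2$-vector space structures (addition given by entry-wise XOR of primes, zero elements $v$ and $(\Pqq(v),\Qqq(v))$) and simply \emph{asserts} that the map is $\FF_2$-linear with trivial kernel between spaces of equal dimension. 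Your proposal instead proves injectivity concretely, by checking that each step of Definition~\ref{decomp-insert-algo} permutes the pair of prime bits (incoming letter, current middle element) --- case~(2) fixes them, case~(1) swaps them, and a halt deposits them on the new boxes of $P$ and $Q$ --- so that the full prime transport is a composition of invertible local moves that can be undone alongside the unprimed reverse insertion. This is essentially the content that the paper's ``observe that \dots the map is $\FF_2$-linear with trivial kernel'' leaves implicit, so your version is, if anything, more complete; the one point you correctly flag as needing care (that the case-(1)/(2)/halt sequence is determined by the unprimed insertion path, so the backward pass is single-valued) is exactly the verification that justifies the paper's linearity claim as well.
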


\begin{proof}
Fix an unprimed word $v$ and let $T = \Pqq(v)$ and $U = \Qqq(v)$.
Let $\cW$ be the set of primed words $w$ with $\unprime(w) = v$.
Then let $\cT$ be the set of pairs $(P,Q)$ of shifted tableaux with the same shape
such that $P$ is a primed decomposition tableau $Q$ is a standard shifted tableau with no  primed diagonal entries, and with $\unprime(P) = T$ and $\unprime(Q) = U$.
 
By Lemma~\ref{unprime-lem3} and Remark~\ref{r-rmk},
the operation $w \mapsto (\Pqq(w),\Qqq(w))$ is a map $\cW \to \cT$,
and it suffices to show that this map is a bijection.
For this, we regard $\cW$ and $\cT$ as $\FF_2$-vector spaces
in which  the zero elements are $v$ and $(T,U)$, and in which addition in computed by ``summing'' the primes of corresponding letters or boxes entry-wise,
where a prime plus a prime yields no prime, as does no prime plus no prime,
while a prime plus no prime yield a prime.

Now observe that $\cW$ and $\cT$ have the same finite dimension, and the map $w \mapsto (\Pqq(w),\Qqq(w))$ is $\FF_2$-linear with trivial kernel, so it is a bijection.
\end{proof}

A map $\phi : \cB \to \cC$ between ($\gl_n$, $\q_n$, or $\qq_n$) crystals is a \defn{quasi-isomorphism}
if for each full subcrystal $\cB' \subseteq \cB$ there is a full subcrystal $\cC' \subseteq \cC$
such that $\phi|_{\cB'}$ is an isomorphism $\cB' \to \cC'$.
The $\q_n$ part of the following  result is \cite[Thm.~4.5]{GJKKK}.

\begin{theorem}\label{main-thm}
The map $\Pqq$ defines $\q_n$ and $\qq_n$ crystal quasi-isomorphisms
\[ 
\BB_n^{\otimes m} \to \bigsqcup_{\substack{\text{strict }\lambda \vdash m \\ \ell(\lambda) \leq n}} \DTab_n(\lambda)
\quand
(\BB_n^+)^{\otimes m} \to \bigsqcup_{\substack{\text{strict }\lambda \vdash m \\ \ell(\lambda) \leq n}} \DDTab_n(\lambda).\]
Moreover, 
the full $\q_n$-subcrystals of $\BB_n^{\otimes m} $ and the full $\qq_n$-subcrystals of $(\BB_n^+)^{\otimes m} $ are the subsets on which $\Qqq$ is constant.
\end{theorem}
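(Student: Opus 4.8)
The plan is to deduce the $\qq_n$ statement from the $\q_n$ statement (the cited \cite[Thm.~4.5]{GJKKK}) by a reduction via $\unprime$, leaving a bookkeeping problem about primes as the only genuinely new work. First I would isolate the two properties that do all the work: that $\Pqq$ \emph{intertwines} the crystal operators, in the sense that $\Pqq(e_i(w)) = e_i(\Pqq(w))$ and $\Pqq(f_i(w)) = f_i(\Pqq(w))$ for every $i \in \{\bar 1, 0, 1, \dots, n-1\}$, and that $\Qqq$ is \emph{invariant}, meaning $\Qqq(e_i(w)) = \Qqq(w)$ whenever $e_i(w) \neq 0$ (and similarly for $f_i$). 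Granting these, everything else is formal: by Proposition~\ref{qq-bij-thm} the map $w \mapsto (\Pqq(w), \Qqq(w))$ is a bijection, so on a fixed fiber $\Qqq^{-1}(Q)$ with $Q$ of shape $\lambda$ the map $\Pqq$ is a bijection onto $\DDTab_n(\lambda)$; intertwining then upgrades this bijection to a $\qq_n$-crystal isomorphism, and invariance shows the operators never leave a fiber. Consequently each full subcrystal $\cB' \subseteq (\BB^+_n)^{\otimes m}$ has $\Qqq$ constant and is carried by $\Pqq$ isomorphically onto a connected full subcrystal of $\DDTab_n(\lambda)$, which is exactly the assertion that $\Pqq$ is a quasi-isomorphism.

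Second, I would prove intertwining and invariance by reducing modulo primes to the known $\q_n$ case. Because $\unprime$ commutes with $e_i$ and $f_i$ for $i \neq 0$ on both words and tableaux (Lemmas~\ref{unprime-lem1} and \ref{unprime-lem2}) and with the two insertions (Lemma~\ref{unprime-lem3}), the identity
\[
\unprime\bigl(\Pqq(e_i(w))\bigr) = \Pqq\bigl(e_i(\unprime(w))\bigr) = e_i\bigl(\Pqq(\unprime(w))\bigr) = \unprime\bigl(e_i(\Pqq(w))\bigr)
\]
holds for all $i \neq 0$, where the middle equality is the cited $\q_n$ statement (which supplies both the intertwining of $\Pqq$ and the invariance of $\Qqq$ on unprimed words); for $i = 0$ the same conclusion is immediate since $e_0$ leaves $\unprime$ unchanged. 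The analogous computation with $\Qqq$ in place of $\Pqq$ shows that $\Qqq(e_i(w))$ and $\Qqq(w)$ agree after $\unprime$. Since crystal operators preserve the shape of a tableau in $\DDTab_n(\lambda)$, both $\Pqq(e_i(w))$ and $e_i(\Pqq(w))$ have the same shape and the same $\unprime$, so they can differ only in the placement of primes, and likewise for the two candidate recording tableaux.

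Third — and this is the technical heart — I would verify that these primes actually coincide, by a case analysis keyed to the explicit insertion rule (Definition~\ref{decomp-insert-algo}) and the signature rule (Proposition~\ref{sign-ex}). For $e_0$ and $f_0$ this is short: each toggles the prime on a single box, namely the one carrying the first $1$ or $1'$ in reverse row reading order, and one checks that inserting the edited word toggles the prime on the corresponding box of $\Pqq(w)$ while adding boxes in the same positions with the same even/odd labels. For $e_i, f_i$ with $i \in [n-1]$ the argument mirrors the proof of Theorem~\ref{revrow-thm}: the edited box and its horizontal neighbours cannot carry the values that would relocate a middle element, so primed middle elements are transported unchanged through insertion. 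The genuinely delicate case is $e_{\bar 1}$ and $f_{\bar 1}$, because the $\qq_n$ tensor formulas \eqref{eb1-eq}--\eqref{fb1-eq} differ from their $\q_n$ counterparts and, in the subcase where the two active letters lie in a common row, a primed middle element is shifted between adjacent columns. I expect this $e_{\bar 1}/f_{\bar 1}$ prime-tracking — reconciling the column shift on the tableau side with the letter swap on the word side — to be the main obstacle, and I would handle it by following the middle element of the affected row through the insertion exactly as in the corresponding subcases of Theorem~\ref{revrow-thm}.

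Finally, to obtain the sharp statement that the full subcrystals are \emph{precisely} the fibers of $\Qqq$, I would note that each fiber is isomorphic via $\Pqq$ to $\DDTab_n(\lambda)$, which is connected: it is finite and normal, since the embedding $\revrow$ of Theorem~\ref{revrow-thm} realizes it inside $(\BB^+_n)^{\otimes|\lambda|}$, and it has the unique $\qq_n$-highest weight element $\Thighest_\lambda$ by Theorem~\ref{highest-thm}. Hence every fiber is a single connected component, completing the identification of the full $\qq_n$-subcrystals with the level sets of $\Qqq$.
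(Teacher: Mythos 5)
Your architecture coincides with the paper's: reduce modulo $\unprime$ to the $\q_n$ statement of Grantcharov et al.\ so that $\Pqq(e_i(w))$ and $e_i(\Pqq(w))$ can differ only in the placement of primes, track the primes through the insertion case by case, and assemble the quasi-isomorphism formally via Proposition~\ref{qq-bij-thm}; your treatment of $e_0$, your identification of the $e_{\bar 1}$ column-shift subcase, and your connectedness argument for the fibers of $\Qqq$ all match Proposition~\ref{01-prop}, Lemmas~\ref{bar-lem1} and \ref{bar-lem2}, and the end of the paper's proof. One economy you miss: the paper runs the case analysis only for the raising operators and recovers the $f_i$ statements formally. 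If $f_i(w)\neq 0$ then $e_i(\Pqq(f_i(w)))=\Pqq(w)$ forces $\Pqq(f_i(w))=f_i(\Pqq(w))$, and if $f_i(\Pqq(w))\neq 0$ then Proposition~\ref{qq-bij-thm} supplies a word $v$ with $\Pqq(v)=f_i(\Pqq(w))$ and $\Qqq(v)=\Qqq(w)$, whence $e_i(v)=w$ and $f_i(w)=v\neq 0$. Adopting this halves the prime-tracking and spares you a separate $f_{\bar 1}$ analysis.

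The one place your plan would fail as stated is the case $i\in[n-1]$. The argument of Theorem~\ref{revrow-thm} that you propose to transplant applies $e_i$ in place to a fixed tableau, so exactly one box changes and only its horizontal neighbours need to be constrained by the signature rule. Here you must instead compare two insertion processes, for $w$ and for $e_i(w)$, and their bumping paths genuinely diverge: when $e_i(w)=(w_1-1)w_2\cdots w_m$, the letters $i+1$ and $i$ may bump different entries from the increasing part of the first row; and when the decremented entry sits in a box $(j,k)$ deep inside $\Pqq(w_2\cdots w_m)$, the letter arriving at row $j$ may bump box $(j,k)$ in one tableau but a different column in the other, or eject $i+1$ from one and $i$ from the other. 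Showing that the two processes nevertheless expel the same entry from each weakly decreasing part, relocate the same middle elements, and terminate in the same box with the same even/odd parity is the content of Lemma~\ref{main-lem2}, by far the longest step of the paper's proof; it proceeds by induction on word length and leans repeatedly on the hypothesis that $(j,k)$ holds the first $i$-unpaired letter equal to $i+1'$ or $i+1$ in reverse reading order to rule out the bad divergences. Expect the bulk of the work there, not only in the $\bar 1$ case.
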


The next section is devoted to the proof of this theorem.

\subsection{Proof of Theorem~\ref{main-thm}}\label{pro-sect}

We adopt the convention that $ \Pqq(0) =\unprime(0) = e_i(0) = f_i(0) = 0$.

\begin{lemma}\label{main-lem1}
Suppose $w$ is a primed word and $i \in \{\bar 1,1,2,\dots,n-1\}$. Then
\[\unprime(\Pqq(e_i(w))) = \unprime(e_i(\Pqq(w))).\]
\end{lemma}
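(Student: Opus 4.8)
The plan is to reduce the identity to its unprimed shadow, where it becomes the $\q_n$-statement already available from \cite[Thm.~4.5]{GJKKK} (the $\q_n$ part of Theorem~\ref{main-thm}). The three commutation lemmas in hand do all the work of moving the primes out of the way: Lemma~\ref{unprime-lem1} says $\unprime$ commutes with $e_i$ on words, Lemma~\ref{unprime-lem2} says $\unprime$ commutes with $e_i$ on primed decomposition tableaux, and Lemma~\ref{unprime-lem3} says $\unprime \circ \Pqq = \Pqq \circ \unprime$.

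First I would rewrite the left-hand side. Applying Lemma~\ref{unprime-lem3} to the word $e_i(w)$ and then Lemma~\ref{unprime-lem1} gives
\[
\unprime(\Pqq(e_i(w))) = \Pqq(\unprime(e_i(w))) = \Pqq(e_i(\unprime(w))).
\]
Since $\unprime(w)$ is an ordinary unprimed word, the $\q_n$ quasi-isomorphism applies: because $\Pqq$ is a $\q_n$-crystal quasi-isomorphism it restricts to an isomorphism on the connected component of $\unprime(w)$, and any crystal isomorphism commutes with $e_i$, so $\Pqq(e_i(\unprime(w))) = e_i(\Pqq(\unprime(w)))$. Then applying Lemma~\ref{unprime-lem3} once more and Lemma~\ref{unprime-lem2} rewrites this as the right-hand side,
\[
e_i(\Pqq(\unprime(w))) = e_i(\unprime(\Pqq(w))) = \unprime(e_i(\Pqq(w))),
\]
and concatenating the two displays yields the claim.

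Two points deserve care, though neither is a real obstacle. The first is simply keeping track of which operator is meant: on the left $e_i$ is the word operator on $(\BB_n^+)^{\otimes m}$, while on the right it is the tableau operator supplied by Theorem~\ref{revrow-thm}, and Lemmas~\ref{unprime-lem1} and~\ref{unprime-lem2} are exactly what let us pass between them. The second is that any of the intermediate expressions may equal $0$; one should check the chain still holds under the conventions $\Pqq(0) = \unprime(0) = e_i(0) = 0$ adopted at the start of the section, which is immediate since each invoked lemma and the $\q_n$ result respect these conventions. The only genuine input is the appeal to \cite{GJKKK} for the unprimed commutation; everything prime-specific has been absorbed into the three $\unprime$-lemmas.
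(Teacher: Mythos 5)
Your proof is correct and is essentially identical to the paper's: both reduce the claim to the unprimed identity $\Pqq(e_i(v)) = e_i(\Pqq(v))$ for $v \in \BB_n^{\otimes m}$ imported from \cite[Thm.~4.5]{GJKKK}, and then sandwich it with Lemmas~\ref{unprime-lem1}, \ref{unprime-lem2}, and \ref{unprime-lem3} in exactly the same chain of equalities. The only detail the paper adds is a remark that the convention in \cite{GJKKK} is the reverse of the anti-Kashiwara convention used here, which must be accounted for when quoting their result.
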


\begin{proof}
On unprimed words, our definition of $\Pqq$ coincides with the insertion tableau
in \cite[Def.~4.1]{GJKKK} (but with the order of insertion reversed), and it is shown in the proof of \cite[Thm.~4.5]{GJKKK} then this insertion tableau commutes with the $\q_n$-crystal operators on $\BB_n^{\otimes m}$. As our $\q_n$-tensor product follows the anti-Kashiwara convention, which is the reverse of the convention in \cite{GJKKK}, this means that
 \be\Pqq(e_i(v)) = e_i(\Pqq(v)) \text{ when }v \in \BB_n^{\otimes m}.
 \ee
Combining  this with
 Lemmas~\ref{unprime-lem1}, \ref{unprime-lem2}, and \ref{unprime-lem3}
 gives 
 \[\ba
 \unprime(\Pqq(e_i(w))) &=
 \Pqq(\unprime(e_i(w))) \\&=
  \Pqq(e_i(\unprime(w))) \\&=
    e_i(\Pqq(\unprime(w))) \\&=
                e_i(\unprime(\Pqq(w))) =
                \unprime(e_i(\Pqq(w))).\ea
                \]
\end{proof}

Recall that the string length $\varepsilon_i$ (respectively, $\varphi_i$) counts the 
number of $i$-unpaired entries equal to $i+1'$ or $i+1$
(respectively, $i'$ or $i$) in a primed word or in the reverse row reading word of a
primed decomposition tableau.

\begin{lemma}\label{main-lem2}
Fix  $w=w_1w_2\cdots w_m \in (\BB_n^+)^{\otimes m} $ with $m>0$
and  $i \in [n-1]$ with $e_i(w) \neq 0$. 
Let $v =w_2w_3\cdots w_m$ and $P = \Pqq(v)$,
and assume that
\ben
\item[(a)] one has $\varepsilon_i(v)  =\varepsilon_i(P)$ 
and $\varphi_i(v)  =\varphi_i(P)$, and
\item[(b)] if $e_i(v)\neq 0$ then
$ \Pqq(e_i(v)) = e_i(P)$ and $ \Qqq(e_i(v)) = \Qqq(v).$
\een
Then 
$\Pqq(e_i(w)) =e_i(\Pqq(w))$
and 
$\Qqq(e_i(w)) = \Qqq(w)$.
\end{lemma}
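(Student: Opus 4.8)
The plan is to read this as the inductive step (on word length) in the proof that $\Pqq$ intertwines $e_i$, exploiting the factorization $\Pqq(w) = w_1 \toQQ P$. First I would locate where $e_i$ acts on the tensor $w = w_1 \otimes v$ via the tensor-product/signature rule: since $\varepsilon_i(w_1)\in\{0,1\}$, either $\varepsilon_i(w_1)\le\varphi_i(v)$, giving $e_i(w)=w_1\otimes e_i(v)$ with $e_i(v)\neq 0$ (Case~A, forced by $e_i(w)\neq 0$), or else $w_1\in\{i+1',i+1\}$ and $\varphi_i(v)=0$, giving $e_i(w)=e_i(w_1)\otimes v$ (Case~B). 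The hypotheses $\varphi_i(v)=\varphi_i(P)$ and $\varepsilon_i(v)=\varepsilon_i(P)$ in~(a) are exactly what transport this dichotomy from $v$ to the reverse reading word of $P$, so that the same analysis governs how $e_i$ acts on $\revrow(w_1\toQQ P)$; in particular $\varepsilon_i(P)=\varepsilon_i(v)>0$ in Case~A guarantees $e_i(P)\neq 0$, while $\varphi_i(P)=0$ in Case~B forces the inserted letter to become the first $i$-unpaired $(i+1)$-entry.

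In Case~A I would invoke hypothesis~(b) to write $\Pqq(e_i(w)) = w_1\toQQ\Pqq(e_i(v)) = w_1\toQQ e_i(P)$, reducing the first claim to the single-insertion identity $w_1\toQQ e_i(P) = e_i(w_1\toQQ P)$. In Case~B, where $e_i$ does not touch $v$, the same factorization gives $\Pqq(e_i(w)) = e_i(w_1)\toQQ P$, reducing instead to $e_i(w_1)\toQQ P = e_i(w_1\toQQ P)$. Both reduced identities I would attack by separating the unprimed skeleton from the primes: Lemma~\ref{main-lem1} already yields $\unprime(\Pqq(e_i(w)))=\unprime(e_i(\Pqq(w)))$, so both sides share the same underlying decomposition tableau, hence the same shape and the same set of middle-element boxes, and it remains only to match the primed boxes.

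The structural fact driving the prime comparison is that for $i\in[n-1]$ the operator $e_i$ merely decrements a single entry (from $i+1$ or $i+1'$ to $i$ or $i'$) and, as shown in the proof of Theorem~\ref{revrow-thm}, leaves every middle element and every primed box in place. Thus $P$ and $e_i(P)$ (Case~A), as well as the letters $w_1$ and $e_i(w_1)$ (Case~B), carry identical prime data, while the insertion algorithm of Definition~\ref{decomp-insert-algo} consults primes only through $\lceil a_i\rceil$ and the motion of middle elements. Tracking the bumping path, I would check that the lone value-change produced by $e_i$ — occurring at the distinguished $(i+1)$-entry, whose left and right neighbours are constrained just as in the Theorem~\ref{revrow-thm} analysis — neither reroutes the insertion path nor relocates any middle element, so the two insertions deposit their primes in the same boxes; this together with $e_i$ preserving prime locations yields the desired equality of primed tableaux. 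I expect this bumping-path verification to be the main obstacle, since it must be carried out across the several sub-cases of Definition~\ref{decomp-insert-algo} (box appended versus entry bumped, middle element stationary versus shifting).

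Finally, for the recording tableau I would observe that passing from $v$ to $w=w_1v$ appends exactly one box, the landing box of the last insertion $w_1\toQQ$, while leaving all earlier labels untouched, so $\Qqq(w)$ equals $\Qqq(v)$ together with this new box carrying its even/odd label. Because the unprimed insertions agree by Lemma~\ref{main-lem1}, the new box occupies the same position in $\Qqq(e_i(w))$ and $\Qqq(w)$, its even/odd parity is fixed by the prime data already shown to coincide, and the labels on the remaining boxes agree since $\Qqq(e_i(v))=\Qqq(v)$ by~(b) in Case~A and trivially (the tail being unchanged) in Case~B. Combining these gives $\Qqq(e_i(w))=\Qqq(w)$, completing both assertions.
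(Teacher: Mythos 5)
Your overall architecture coincides with the paper's: the same dichotomy $e_i(w)=e_i(w_1)v$ versus $e_i(w)=w_1e_i(v)$ coming from the tensor-product rule, the same reduction via Lemma~\ref{main-lem1} to matching only the \emph{locations} of primed boxes, and the same use of hypotheses (a) and (b) to transport string-length data from $v$ to $\revrow(P)$ and to rewrite $\Pqq(e_i(w))$ as $w_1\toQQ e_i(P)$ or $(w_1-1)\toQQ P$. The gap is in the step you yourself flag as the main obstacle: your working hypothesis that the single decrement produced by $e_i$ ``neither reroutes the insertion path nor relocates any middle element'' is false, and the actual proof must cope with genuinely divergent bumping paths. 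Concretely, in the case $e_i(w)=w_1e_i(v)$, let $(j,k)$ be the box where $P$ and $e_i(P)$ differ, and suppose the letter $a$ reaching row $j$ bumps an entry $b$ from the increasing part while $(j,k)$ lies in the weakly decreasing part. If $b=i+1$, then $b$ bumps an entry equal to $i$ from \emph{different columns} of row $j$ in $P$ and in $e_i(P)$; if $b>i+1$ and $b$ is not greater than every entry to the left of $(j,k)$, then $b$ bumps $P_{jk}$ from $P$ but $P_{jk}-1$ from $e_i(P)$, so the two insertions propagate \emph{different letters} $c$ and $c-1$ (with $c\in\{i+1',i+1\}$) into the remaining rows. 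In that situation nothing follows from ``the paths agree''; the paper instead observes that the sub-tableau below row $j$ has no $i$-unpaired letters in $\{i',i\}$ in its reverse reading word and then re-runs the whole Case-B analysis (inserting $c$ versus $c-1$) on that sub-tableau. Without this recursive appeal, and without the structural facts ruling out $a\in\{i+1',i+1\}$ striking the middle box (which is what guarantees the middle position moves in one insertion if and only if it moves in the other), your argument does not close.

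A second, smaller issue: the even/odd parity of the terminal box of $\Qqq$ is decided by whether $a_i$ or $m_i$ is primed \emph{at the final step} of the insertion, per the two halting modes of Definition~\ref{decomp-insert-algo}, so it is not ``fixed by the prime data'' of $P$ and $e_i(P)$ in the abstract. One must check in each subcase that the two insertions halt in the same mode with the same prime status, which is exactly what the paper's case analysis establishes in tandem with the prime-location comparison; these verifications cannot be separated from the bumping-path analysis.
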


\begin{proof} 
To show that $\Pqq(e_i(w)) =e_i(\Pqq(w))$,
it suffices by Lemma~\ref{main-lem1} to show that the two tableaux have primed entries in 
exactly the same locations.
As the primed boxes in $e_i(\Pqq(w))$ are in the same locations as in $\Pqq(w)$, though
with different entries,
it is enough to show that $\Pqq(e_i(w)) $ and $\Pqq(w)$ have primed entries in 
exactly the same locations.
This is what we will actually check, along with $\Qqq(e_i(w)) = \Qqq(w)$.

By the definition of the $\gl_n$-crystal tensor product,
we have either $e_i(w) = w_1 e_i(v)$ or $e_i(w) = e_i(w_1)v$.
This divides the proof into two main cases.

First assume $e_i(w) =e_i(w_1) v$.
Then
  $\varepsilon_i(w_1) > \varphi_i(v)$,
which can only happen if $\varepsilon_i(w_1) = 1$ and $\varphi_i(v)=0$,
 so 
 $w_1 \in \{i+1',i+1\}$ and there are no unpaired
entries equal to $i+1$ or $i+1'$ in 
$v$, or in $\revrow(P)$ since $\varphi_i(P) = \varphi_i(v)$
by hypothesis.
Thus
$e_i(w) = (w_1-1)v$ and $
 \Pqq(e_i(w)) = (w_1-1) \toQQ P.
$
We now compare the effect of inserting $w_1$
versus $w_1-1$ into $P$:
\bei
\item If adding $w_1$ to the end of the first row and removing all primes 
yields a hook word, then the same is true of $w_1-1$ 
since the row cannot end in $i'$ or $i$
as $\varphi_i(\revrow(P)) = 0$.

\item If $w_1$ and $w_1-1$ bump the same
entry from the increasing part of the first row, then all subsequent
steps of the insertion processes are identical.

\item The only way that $w_1$ and $w_1-1$ can bump different entries
from the increasing part of the first row is if this part contains
the number $i$. But then, as $\varphi_i(\revrow(P)) = 0$, the increasing part must also contain $i+1$,
while the weakly decreasing part cannot contain any entries 
equal to $i'$ or $i$.
This means that even if 
$w_1$ and $w_1-1$ bump different entries
from the increasing part of the first row, the
same entries will be bumped from the weakly decreasing part of the row,
all subsequent
steps of the insertion process will be identical,
and the middle position in the first row will either change in both cases
or remain the same in both cases.
\eei
In each of these situations, since $w_1 $ and $w_1-1$ are either both primed or both unprimed, we have
 $\Qqq(e_i(w)) = \Qqq(w)$ and 
 the locations of the primed boxes in $\Pqq(e_i(w))$ and $\Pqq(w)$
are the same.
Therefore if
$e_i(w) =  e_i(w_1) v$
 then the desired claims hold.

Now suppose  $e_i(w) = w_1 e_i(v)$
so that $\varepsilon_i(w_1) \leq \varphi_i(v)$ and $e_i(v) \neq 0$.
This can only happen if either 
$w_1 \notin \{i+1',i+1\}$ or if there is some $i$-unpaired 
entry equal to $i'$ or $i$ in $v$ (and hence also in $\revrow(P)$, since $\varphi_i(v)  =\varphi_i(P)$).
Then  \[ \Pqq(e_i(w)) = w_1 \toQQ \Pqq(e_i(v))
= w_1 \toQQ e_i(P)
\]
by hypothesis, and  $e_i$ acts on $P$ by subtracting one from 
the  first $i$-unpaired entry equal to $i+1'$ or $i+1$ in the reverse row reading word order.
Suppose this entry is in box $(j,k)$. 
We now compare the two insertion processes to construct $w_1 \toQQ P=\Pqq(w) $ and $w_1 \toQQ e_i(P)=\Pqq(e_i(w)) $.

If the process to insert $w_1 \toQQ P$ halts
before row $j$, then  the insertion process for $w_1 \toQQ e_i(P)$
proceeds by exactly the same steps (as the first $j-1$ rows
of $P$ and $e_i(P)$ are the same)
 and also halts before row $j$.
As we have $\Qqq(e_i(v)) = \Qqq(v)$ by hypothesis,
it follows in this case that 
 $\Qqq(e_i(w)) = \Qqq(w)$,
 and that
 the primed boxes in $\Pqq(e_i(w))= w_1 \toQQ e_i(P)$ and $\Pqq(w)=w_1 \toQQ P$ have the same locations.
 
 Assume the insertion process for
 $w_1 \toQQ P$ reaches row $j$, so that the same is true for the process
 inserting $w_1 \toQQ e_i(P)$.
 Let $a$ be the number inserted into row $j$ in both cases.
Then $a$ is either $w_1$ when $j=1$, or 
an entry from the weakly decreasing part of the previous row, possibly
with its prime toggled.

Since $P_{jk}$ is the first $i$-unpaired entry equal
 to $i+1'$ or $i+1$ in $\revrow(P)$,
 we can only have 
  $a \in \{i+1',i+1\}$ 
  if there is some entry $P_{jl} \in \{i',i\}$ for it to pair with 
  in the reverse row reading word order, where $k<l$. 
 Thus, if   $a \in \{i+1',i+1\}$  
 then the boxes to the right of $(j,k)$ in $P$
must contain at least one entry in $\{i',i\}$
 and no entries in $\{i+1',i+1\}$,
 or three not necessarily consecutive entries
 going left to right of the form $i,i,i+1$ or $i,i',i+1$.
 Note that the these boxes contain the same entries in $P$ as in $e_i(P)$.

Suppose adding $a$ to the end of row $j$ of $P$ and removing all primes
 yields a hook word.
  Then the same is true of $e_i(P)$, and 
 this addition
  changes the location of the middle element in $P$
 if and only if it does so in $e_i(P)$, 
since it cannot happen that $(j,k)$ is the last element of the row
 when $a \in \{i+1',i+1\}$.
 It follows that $\Qqq(e_i(w))$ and $\Qqq(w)$ are
 both formed by adding the same new entry to $\Qqq(e_i(v)) = \Qqq(v)$,
 so $\Qqq(e_i(w)) = \Qqq(w)$.
Moreover,
 the primed boxes in $\Pqq(e_i(w)) $ and $\Pqq(w) $ have the same locations.
 
 Suppose 
 we are not in the previous case 
 and $(j,k)$ is in the increasing part of row $j$ of $P$.
 Then $(j,k)$ is also in the increasing part of row $j$ of $e_i(P)$,
 as otherwise we would have $P_{j,k-1} \in \{i',i\}$ and this number would be $i$-paired with $P_{jk}$ in $\revrow(P)$.
All entries to the left of $(j,k)$ in row $j$ of $P$ must be greater than $i+1$,
and the entries to the right cannot include $i'$ or $i$ as 
then $P_{jk}$ would be $i$-paired in $\revrow(P)$.
 This means that row $j$ of $P$ must not contain 
 any entries equal to $i'$ or $i$. 
 
 As a consequence, we must have
  $a \notin\{i+1',i+1\}$,
 so if $a$ does not bump box $(j,k)$ in $P$ then it also does not bump box 
 $(j,k)$ in $e_i(P)$; in this event,   the two insertion
 processes bump the same entries in every subsequent row, so it is clear that 
  $\Qqq(e_i(w)) = \Qqq(w)$
 and that
 the primed boxes in $\Pqq(e_i(w)) $ and $\Pqq(w)$ have the same locations.
We reach the same conclusion if $a$ does bump box $(j,k)$ in $P$, as then
 $a$ also must bump box $(j,k)$ in $e_i(P)$, 
 and in each case the same entry will then be bumped from the 
 weakly decreasing part of the row, and the middle position will change
 in one insertion if and only if it changes in the other.
 
There is now just one case left to consider. Namely,
assume that $a$ bumps some entry $b$ in the increasing part of 
 row $j$, and that $(j,k)$ is in the row's weakly decreasing part
 in   $P$ (and also in $e_i(P)$).
 Since we cannot have $a \in \{i+1',i+1\}$ if $(j,k)$ is  the middle position of row $j$
 of $P$,
 the middle position of row $j$ changes when inserting $w_1 \to P$ if and only if 
 it changes when inserting $w_1 \to e_i(P)$.
  We are left with three subcases:
\bei
\item If $b\leq i$ or if $b$ is greater than some entry 
to the left of $(j,k)$ in row $j$ of $P$,
 then $b$ bumps the same entry from the weakly decreasing 
 part of row $j$ in both $P$  and $e_i(P)$;
 then, as above, the   two insertion processes will
 bump the same entries in every subsequent row, so  
  $\Qqq(e_i(w)) = \Qqq(w)$
 and  
 the primed boxes in $\Pqq(e_i(w)) $ and $\Pqq(w)$ have the same locations.

\item Suppose $b=i+1$. The number $b$ must originate in some box $(j,l)$ of $P$
with $k<l$. 
Since $P_{jk}$ is the first $i$-unpaired entry equal to $i+1'$ or $i+1$
in $\revrow(P)$, some box between columns $k$ and $l$
of row $j$ in $P$ (and also in $e_i(P)$) must contain $i'$ or $i$.
If this box is in the increasing part of the row,
then $b$ could only be bumped by $a \in \{i+1',i+1\}$,
which would require a second box to the right of $(j,k)$ to contain $i$.
Thus, either way, the decreasing parts of row $j$ in $P$ and $e_i(P)$
must contain a box equal to $i$ to the right of $(j,k)$. 
 This means that $(j,k)$ is not the middle position,
 so $P_{jk} = i+1$ and $e_i(P)_{jk} = i$ are both unprimed. Therefore
  $b$ will bump $i$ from row $j$ of both $P$ and $e_i(P)$, though from different columns,
 and then the same number (possibly after adding a prime) will be inserted into the next row 
 for both tableaux. As above, this means that
  the   two insertion processes will
 bump the same entries in every subsequent row, so  
  $\Qqq(e_i(w)) = \Qqq(w)$
 and  
 the primed boxes in $\Pqq(e_i(w)) $ and $\Pqq(w)$ have the same locations.

\item Finally suppose $b$ is greater than $i+1$ but not greater than some entry 
to the left of $(j,k)$ in row $j$ of $P$.
Then $b$ will bump $P_{jk}$ from $P$ and $P_{jk} - 1$ from $e_i(P)$.
On the next step of the insertion algorithm for 
 $w_1 \toQQ P$,
some primed number $c \in \{i+1',i+1\}$ (equal to $P_{jk}$ or to $P_{jk}$ with its prime reversed) will be inserted into the decomposition tableau $T$
composed of the rows of $P$ after row $j$;
while on
 the next step of the insertion algorithm for 
 $w_1 \toQQ e_i(P)$, the number $c-1$ will be inserted into
 the
 same tableau $T$.
 The reverse row reading word of $T$
must have no $i$-unpaired entries equal to $i'$ or $i$
since $P_{jk}$ is $i$-unpaired in $\revrow(P)$.
This situation is therefore identical to one we have already analyzed,
when comparing the effect of inserting $w_1$ and $w_1-1$ into $P$
when $e_i(w) = e_i(w_1)v = (w_1-1)v$. By repeating the argument
in that case, we deduce again that
 $\Qqq(e_i(w)) = \Qqq(w)$ and 
 that the set of primed boxes in $\Pqq(e_i(w))$ and $\Pqq(w)$
are the same.
\eei
 This case analysis completes the proof of the lemma.
\end{proof}

\begin{proposition}\label{01-prop}
Let $w=w_1w_2\cdots w_m \in (\BB_n^+)^{\otimes m} $ 
and $i \in \{0,1,2,\dots,n-1\}$. Then
$ \Pqq(e_i( w)) = e_i (\Pqq(w))$,
and if $e_i(w)\neq0$ then $\Qqq(e_i (w)) = \Qqq(w)$.
\end{proposition}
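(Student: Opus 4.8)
The plan is to induct on the length $m$ of $w$, carrying all indices $i\in\{0,1,\dots,n-1\}$ simultaneously. The empty word is a trivial base case ($e_i$ annihilates it on both sides), so I assume $m>0$, write $v=w_2w_3\cdots w_m$ and $P=\Pqq(v)$, so that $\Pqq(w)=w_1\toQQ P$, and take as inductive hypothesis the full statement of the proposition for the shorter word $v$.

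For an index $i\in[n-1]$ the essential work has been isolated in Lemma~\ref{main-lem2}, and it remains only to verify its two hypotheses. Hypothesis (b) is exactly the inductive hypothesis applied to $v$. For hypothesis (a), the equalities $\varepsilon_i(v)=\varepsilon_i(P)$ and $\varphi_i(v)=\varphi_i(P)$, I would note that the string lengths depend only on the unprimed content, so that $\varepsilon_i(v)=\varepsilon_i(\unprime(v))$ and $\varepsilon_i(P)=\varepsilon_i(\unprime(P))=\varepsilon_i(\Pqq(\unprime(v)))$ by Lemma~\ref{unprime-lem3}; these agree because the unprimed insertion used in the proof of Lemma~\ref{main-lem1} commutes with the $\gl_n$-crystal operators and preserves weight, hence preserves both $\varepsilon_i$ and $\varphi_i$. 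With (a) and (b) in hand, Lemma~\ref{main-lem2} delivers both conclusions whenever $e_i(w)\neq 0$. The only remaining subcase, $e_i(w)=0$, is handled by the same string-length identity $\varepsilon_i(w)=\varepsilon_i(\Pqq(w))$, which forces $e_i(\Pqq(w))=0=\Pqq(e_i(w))$ while leaving the $\Qqq$ claim vacuous.

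The genuinely separate case is $i=0$, which I expect to be the main obstacle, since here Lemma~\ref{main-lem1} does not apply and the entire content concerns primes: the operator $e_0$ merely changes the first value-$1$ letter of $w$ from $1'$ to $1$ (when that first such letter is primed), and likewise toggles a single value-$1$ box of $\Pqq(w)$ located via $\revrow$. I would unwind the signature rule $(e_0)$ and split on the leftmost letter $w_1$. If $w_1$ is itself a value-$1$ letter, then $e_0$ acts on $w_1$, and one compares the insertions $1'\toQQ P$ and $1\toQQ P$; since removing the prime does not change $\lceil w_1\rceil$, the two processes pass through identical bumping steps, and a check against Definition~\ref{decomp-insert-algo} should show that the net effect is precisely to toggle the prime on one middle-element box of the output while leaving every even/odd label of $\Qqq$ intact. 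If $w_1$ is not a value-$1$ letter, then $e_0(w)=w_1\,e_0(v)$, and invoking the inductive hypothesis $\Pqq(e_0(v))=e_0(P)$ reduces matters to comparing the insertion of the fixed letter $w_1$ into $P$ versus into $e_0(P)$, which again differ only by the single toggled prime. Running alongside this, I must establish the vanishing correspondence $e_0(w)=0 \iff e_0(\Pqq(w))=0$, which is \emph{not} inherited from the unprimed theory and must instead be read off from the same analysis of where the leading value-$1$ entry lands. The delicate points -- tracking that toggled prime through the bumping chain and confirming that no recording label in $\Qqq$ flips -- form the $e_0$-analogue of the careful case analysis carried out for $e_i$ in Lemma~\ref{main-lem2}.
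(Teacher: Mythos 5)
Your proposal is correct and, for the indices $i\in[n-1]$, coincides with the paper's proof: both reduce to Lemma~\ref{main-lem2}, with hypothesis (b) supplied by induction on word length and hypothesis (a) by the fact that the string lengths $\varepsilon_i,\varphi_i$ depend only on unprimed content together with the unprimed result of Grantcharov et al.\ (your handling of the vanishing case via $\varepsilon_i(w)=\varepsilon_i(\Pqq(w))$ is an acceptable substitute for the paper's appeal to Lemma~\ref{main-lem1}). The one genuine structural difference is the case $i=0$. You peel off the leftmost letter $w_1$ and induct, which forces you to prove a new one-step commutation lemma of the shape ``if $x\geq 2'$ then $e_0(x\toQQ P)=x\toQQ e_0(P)$ and the two insertions have the same parity'' (an $e_0$-analogue of Lemma~\ref{bar-lem2}), plus the separate base case $w_1\in\{1',1\}$. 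The paper instead argues globally and non-inductively: it locates the \emph{minimal} $j$ with $w_j\in\{1',1\}$ and traces that single letter through all subsequent insertions, observing that whenever it is inserted into a row it becomes the new middle element and the middle position moves, so every later bumping decision and every parity label is independent of its prime; this immediately yields both that the prime on $w_j$ matches the prime on the first value-$1$ box of $\Pqq(w)$ in reverse reading order and that $\Qqq$ is unchanged, including the vanishing correspondence $e_0(w)=0\iff e_0(\Pqq(w))=0$. Your route works — the local facts you would need (a value-$1$ entry can only occupy a middle-element position, and when bumped it carries its prime to the next row's middle element, by cases (1) and (2) of Definition~\ref{decomp-insert-algo}) are exactly the ones underlying the paper's tracing argument — but it trades the paper's single global observation for an extra lemma whose verification you have flagged but not carried out; the paper's organization avoids that lemma entirely and is somewhat shorter.
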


\begin{proof}
If there are no letters of $w$ equal to $1'$ or $1$
then the same is true of $\revrow(\Pqq(w))$ so $e_0(w) $ and $e_0 (\Pqq(w)) $ are both zero.

Assume  there is some minimal
 $j \in [m]$ with $w_j \in \{1',1\}$.
When $w_j$ is inserted into $\Pqq(w_{j+1}\cdots w_m)$,
it   becomes the new middle element of the first row,
as well as the first entry equal to $1'$ or $1$ in the reverse row reading word.
 Each  subsequent insertion of the numbers $w_{j-1},\dots,w_2,w_1$
(which are all greater than $1$) can only bump $w_j$ 
from its row by changing the row's middle position, which results in $w_j$   
being inserted into the next row, where it becomes the new middle element.
 Since whenever $w_j$ is inserted into a row (on the first iteration or after being bumped)
  the row's middle position changes, 
  all insertions into later rows 
  (as well as whether the insertion is ultimately classified as odd or even) are independent of whether $w_j$ is primed.
  
From these observations, we deduce that 
the number $w_j \in\{1',1\}$ is primed if and only if  the first box of $\Pqq(w)$
containing $1$ or $1'$ in the reverse row reading word order is   primed.
Moreover, 
 toggling the prime on $w_j$ has the effect of toggling the prime on the 
  first box of $\Pqq(w)$
containing $1$ or $1'$ in the reverse row reading word order,
  while preserving $\Qqq(w)$.
We conclude that if $w_j =1$ then $e_0(w) $ and $e_0 (\Pqq(w)) $ are both zero,
and if $w_j = 1'$ then
$ \Pqq(e_0( w)) = e_0 (\Pqq(w))$
and   $\Qqq(e_0 (w)) = \Qqq(w)$.

Now let $i \in [n-1]$. There is nothing to check if the word $w$ is empty so assume $m>0$.
If $e_i(w) = 0$ then  $P(e_i(w)) = 0$ by Lemma~\ref{main-lem1}.
If $e_i(w) \neq 0$ then we can invoke Lemma~\ref{main-lem2} to deduce  
that $ \Pqq(e_i( w)) = e_i (\Pqq(w))$
and   $\Qqq(e_i (w)) = \Qqq(w)$, as we can assume 
the required hypotheses by induction.
\end{proof}

 \begin{lemma}\label{bar-lem1}
 Suppose $P$ is a primed decomposition tableau and $(a,b)$ is the first 
 box in its reverse row reading word order with $P_{ab}= 1^\circ \in \{1',1\}$.
Let $x=2^\bullet \in \{2',2\}$ and $\tilde x = 1^\circ$, and 
 form $\tilde P$ by replacing $P_{ab}$ by $1^\bullet$.
 Then 
\[e_{\bar 1}(x \toQQ P) = \tilde x \toQQ  \tilde P\]
 and the insertions $x\toQQ P$ and $\tilde x \toQQ  \tilde P$
 are both even or both odd.
 \end{lemma}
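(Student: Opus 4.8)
The plan is to separate the statement into its unprimed skeleton, which I will deduce from the known $\q_n$-theory, and its prime-and-parity bookkeeping, which I will check by a direct comparison of the two insertion paths.

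First I would apply $\unprime$ to both sides. Using that $\unprime$ intertwines $e_{\bar 1}$ (Lemma~\ref{unprime-lem2}) and that $\unprime(x \toQQ P) = 2 \toQ \unprime(P)$ and $\unprime(\tilde x \toQQ \tilde P) = 1 \toQ \unprime(P)$ (the single-letter form of Lemma~\ref{unprime-lem3}, immediate from the insertion definitions since $\lceil x\rceil = 2$, $\lceil \tilde x\rceil = 1$, and $\unprime(\tilde P) = \unprime(P)$), the claimed identity projects to
\[ e_{\bar 1}\bigl(2 \toQ \unprime(P)\bigr) = 1 \toQ \unprime(P). \]
Writing $R := \unprime(P)$, I would pick a word $v$ with $\Pq(v) = R$, for instance $v = \revrow(R)$ (inserting the reverse row reading word of a decomposition tableau returns that tableau). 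The signature rule of Proposition~\ref{sign-ex} gives $e_{\bar 1}(2v) = 1v$, since the prepended $2$ is the first $2/2'$ and the hypothesis on $(a,b)$ guarantees a later $1/1'$. As $\Pq$ commutes with $e_{\bar 1}$ on unprimed words (the proof of \cite[Thm.~4.5]{GJKKK}, already invoked in Lemma~\ref{main-lem1}), I obtain $e_{\bar 1}(2 \toQ R) = e_{\bar 1}(\Pq(2v)) = \Pq(e_{\bar 1}(2v)) = \Pq(1v) = 1 \toQ R$. This shows $e_{\bar 1}(x\toQQ P)$ and $\tilde x \toQQ \tilde P$ have the same underlying unprimed tableau, and in particular that they differ from one another in at most the two boxes on which $e_{\bar 1}$ acts.

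It remains to match the primes and the even/odd outcome, which is the main obstacle. Since primed decomposition tableaux carry primes only on middle elements and the two tableaux already agree after $\unprime$, I need only show that the same rows are primed and that the two insertions halt with the same parity. A useful preliminary observation is that $(a,b)$ must be the middle element of its row: in a hook word every $1$ lies at the right end of the decreasing part, so the reverse-reading-first $1/1'$ of any row is its middle element. I would then trace the insertion of $x = 2^\bullet$ into $P$, showing that the inserted $2$ comes to rest in the box that contributes the first $2/2'$ to $\revrow(x\toQQ P)$ and that $(a,b)$ (or its descendant) remains the first subsequent $1/1'$; hence by Proposition~\ref{sign-ex} the operator $e_{\bar 1}$ acts precisely on these two boxes, sending $2^\bullet 1^\circ$ to $1^\circ 1^\bullet$. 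On the other side I would run the insertion of $\tilde x = 1^\circ$ into $\tilde P$, which differs from $P$ only by the prime $\bullet$ now sitting at $(a,b)$, and verify row by row that it follows the same bumping path with the roles of $1$ and $2$ interchanged, depositing $1^\circ$ in the inserted-$2$ box and leaving $1^\bullet$ at $(a,b)$. The delicate part, where I expect the bulk of the casework, is that the prime-placement and parity decisions in cases (1) and (2) of Definition~\ref{decomp-insert-algo} --- which depend on whether the middle position of a row moves and on the parity of the entering letter --- must be shown to fire identically in the two processes. Confirming that they place primes on the same middle elements and produce the same even/odd classification completes the proof.
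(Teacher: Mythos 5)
Your opening reduction is sound and is genuinely different from the paper's argument: the paper proves this lemma by a self-contained, row-by-row comparison of the two insertion processes, whereas you dispose of the unprimed skeleton by combining Lemma~\ref{unprime-lem2}, the one-letter case of Lemma~\ref{unprime-lem3}, the signature rule, and the commutation of the unprimed insertion with $e_{\bar 1}$ from \cite{GJKKK} (already invoked in Lemma~\ref{main-lem1}). You only need the existence of \emph{some} $v$ with $\Pq(v)=\unprime(P)$, which follows from the unprimed bijection of Remark~\ref{r-rmk}, so there is no circularity, and this correctly yields $\unprime(e_{\bar 1}(x\toQQ P))=\unprime(\tilde x\toQQ\tilde P)$ and in particular $e_{\bar 1}(x\toQQ P)\neq 0$.

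Everything after that, however, is a plan rather than a proof, and the plan's guiding picture of the prime dynamics is false. You assert that the two insertions follow the same bumping path "depositing $1^\circ$ in the inserted-$2$ box and leaving $1^\bullet$ at $(a,b)$," but Definition~\ref{decomp-insert-algo} strips the prime from a row's middle element before bumping and reattaches primes according to whether the middle position moved, so primes do not travel with letters. Concretely, let $P$ be the single row \ytab{1 & 1' & 2}, so $(a,b)=(1,2)$ and $\circ$ is a prime; take $\bullet$ a prime as well, so $x=2'$, $\tilde x=1'$, and $\tilde P=P$. Then $x\toQQ P = \ytab{\none & 1' \\ 2 & 1' & 2}$: the inserted $2'$ comes to rest \emph{unprimed} in box $(1,3)$, its prime resurfacing on the letter passed to row two. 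Likewise $\tilde x\toQQ\tilde P=\ytab{\none & 1' \\ 2 & 1 & 1'}$, so box $(a,b)$ ends up \emph{unprimed} even though $\bullet$ is a prime. The identity $e_{\bar 1}(x\toQQ P)=\tilde x\toQQ\tilde P$ does hold here, but only because of the cross-relation the paper isolates as its property (c): the first $1/1'$ box of $x\toQQ P$ is primed iff the first $1/1'$ box of $\tilde x\toQQ\tilde P$ is primed, and dually for the first $2/2'$ box of $x\toQQ P$, with all other boxes of the two tableaux equal. Identifying this invariant and verifying it (together with the parity claim) through the case analysis on whether the first row's increasing part is empty and whether $a=1$ is the actual content of the lemma; your proposal neither states the correct invariant nor carries out the verification, so the main step is missing.
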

 
 \begin{proof}
If the increasing part of the first row of $P$ is empty,
then the
 insertions $x\toQQ P$ and $\tilde x \toQQ \tilde P$ 
 will both add a box to the end of the first row.
 Then:
\bei
\item If $a>1$ then the added box will be the new middle position
 for both tableaux, so both insertions will be odd or even
 according to whether the middle element of the first row of $P$ is primed.
 
\item If $a=1$ then $(a,b)$ must be the 
 rightmost position in the first row of $P$.
 In this case, the first row of $x\toQQ P$
 will end in $\ytab{1^\circ  &2}$ (since the middle position has not changed)
  while the first row of $  \tilde x \toQQ \tilde P$ will end in $\ytab{1& 1^\circ}$
  (since the middle position has changed),
 and the insertion will be odd or even according to whether 
 $\bullet$ indicates a prime.
 \eei
Either way, the following properties hold:
\ben
\item[(a)] The first box of $x\toQQ P$ containing $1'$ or $1$ in the 
reverse row reading order remains $(a,b)$.
\item[(b)] The first box of $x\toQQ P$ containing $2$ or $2'$ in the 
reverse row reading order  is the first box of $\tilde x \toQQ \tilde P$
containing $1'$ or $1$.
\item[(c)] The box in (a) is primed in $x\toQQ P$ (respectively, $\tilde x\toQQ \tilde P$)
 if and only if the box in (b) is primed in $\tilde x\toQQ \tilde P$ (respectively, $x\toQQ P$), and
all other boxes of both tableaux have the same entries.
\item[(d)] The insertions $x\toQQ P$ and $\tilde x \toQQ   \tilde P$
 are both even or both odd.
\een
The first three properties imply that $e_{\bar 1}(x \toQQ P) = \tilde x \toQQ   \tilde P$
as desired.
 
Now assume the increasing part of the first row of $P$ is nonempty.
The first row of $\tilde P$ has the same increasing part
as $P$, and   
 all entries in this part are greater than one.
 Therefore,
both insertions $x\toQQ P$ and $\tilde x \toQQ \tilde P$
will bump the first entry of this sequence, which will then bump some entry from the weakly decreasing part of the row. We again have two cases:
\bei
\item If $a>1$ then  $x=2^\bullet$ and $\tilde x=1^\circ$ will become the respective new middle positions of the first row
of $x\toQQ P$ and $\tilde x \toQQ \tilde P$. Then
the same (possibly primed) number will be inserted into the next row,
and for both insertions all subsequent steps will proceed in exactly the same way,
except that if $1^\circ$ is bumped from $(a,b)$ when inserting $x\toQQ P$
then $1^\bullet$ will be bumped from $(a,b)$ when inserting $\tilde x \toQQ \tilde P$.
When this happens, $1^\circ $ and $1^\bullet$
will be inserted into row $a+1$ of $P$ and $\tilde P$, where they will bump the same numbers to 
become the new middle entries,
and in all remaining rows the two insertion processes will proceed in parallel.

\item If $a=1$ then $(a,b)$ must be the middle position of the first row of both $P$ and $\tilde P$. If this box is bumped when inserting $x\toQQ P$  then it is also bumped
 when inserting $\tilde x\toQQ \tilde P$ (since whether this
 happens depends only on other entries in the first row of $P$
 that are the same in $\tilde P$), and all observations in the previous case will still apply.
Otherwise, $(a,b)$ will remain the middle  box when inserting $x\toQQ P$ but not when inserting $\tilde x\toQQ \tilde P$. 
This means that the first row of $x\toQQ P$ will have the form 
$\ytab{\cdots & 1^\circ & 2& \cdots}$ and a number of the form $y^\bullet$
will be inserted into the next row,
while the first row of $\tilde x \toQQ \tilde P$ will have the form 
$\ytab{\cdots & 1 & 1^\circ& \cdots}$ and the same number $y^\bullet$ will be inserted into the next row.
Thus both insertions will again proceed in exactly the same way after the first row.
\eei
In these cases   properties (a), (b), (c), and (d) above are all still true,
so we again have $e_{\bar 1}(x \toQQ P) = \tilde x \toQQ  \tilde P$.
  \end{proof}

  \begin{lemma}\label{bar-lem2}
 Suppose $P$ is a primed decomposition tableau 
with at least one entry in  $\{1',1\}$ and at least one entry in $\{2',2\}$ such that $e_{\bar 1}(P)\neq 0$.
Let $x$ be any primed number greater than $2$. Then 
$e_{\bar 1}(x \toQQ P) =x\toQQ e_{\bar 1}(P)$
and the insertions $x\toQQ P$ and $x \toQQ e_{\bar 1}(P)$ are both even or both odd.
 \end{lemma}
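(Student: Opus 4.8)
The plan is to reduce the claim to a statement about where primes sit, and then to settle that by running the insertions $x\toQQ P$ and $x\toQQ e_{\bar 1}(P)$ in parallel and comparing them row by row, exactly as in the proofs of Lemmas~\ref{bar-lem1} and \ref{main-lem2}.

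For the reduction I would first show that $e_{\bar 1}(x\toQQ P)$ and $x\toQQ e_{\bar 1}(P)$ have the same shape and the same underlying unprimed entries. By Lemma~\ref{unprime-lem2} we have $\unprime(e_{\bar 1}(x\toQQ P)) = e_{\bar 1}(\unprime(x\toQQ P))$, while a straightforward induction on the number of rows (the single-letter form of Lemma~\ref{unprime-lem3}, using Remark~\ref{simpler-remk}) gives $\unprime(x\toQQ T) = \lceil x\rceil \toQ \unprime(T)$ for any primed decomposition tableau $T$. Since the unprimed case of the present lemma is part of \cite{GJKKK}'s proof (via \cite[Thm.~4.5]{GJKKK}) that decomposition insertion commutes with the $\q_n$-crystal operators, chaining these identities yields
\[
\unprime(e_{\bar 1}(x\toQQ P)) = \lceil x\rceil \toQ e_{\bar 1}(\unprime(P)) = \unprime(x\toQQ e_{\bar 1}(P)).
\]
In particular $e_{\bar 1}(\unprime(x\toQQ P))\neq 0$, hence $e_{\bar 1}(x\toQQ P)\neq 0$ by Lemma~\ref{unprime-lem2}, and the two tableaux in question agree after erasing primes. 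It then remains only to check that their primed boxes coincide and that the two insertions have the same parity.

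For the prime comparison I would invoke the description of $e_{\bar 1}$ from the proof of Theorem~\ref{revrow-thm}: letting $\beta=(x_j,y_j)$ and $\gamma=(x_k,y_k)$ be the boxes contributing the first letter in $\{2',2\}$ and the first letter in $\{1',1\}$ of $\revrow(P)$, the tableau $e_{\bar 1}(P)$ is obtained from $P$ by changing $P_\beta = 2^\bullet$ to $1^\circ$ and $P_\gamma = 1^\circ$ to $1^\bullet$, where either $x_j<x_k$ (so $\beta,\gamma$ lie in distinct rows and are both middle elements) or $x_j = x_k$ with $y_j = y_k+1$ (so $\beta,\gamma$ are adjacent in one row and the middle position moves from column $y_k$ to column $y_j$). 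Since $P$ and $e_{\bar 1}(P)$ agree outside $\beta,\gamma$ and differ there only by the value change $2\mapsto 1$ at $\beta$ and by toggled primes at $\beta$ and $\gamma$, the two insertion processes proceed through identical bumps in every row up to the first row meeting $\beta$ or $\gamma$. I would then verify, splitting into the same two geometric subcases, that beyond this point the bumped entries still agree, that the middle-element positions evolve in the $e_{\bar 1}$-compatible way, and that a box is primed in one output precisely when the corresponding box is primed in the other, so that both the primed boxes and the final even/odd classification match.

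The hard part will be the rows that actually contain $\beta$ or $\gamma$. Because $x>2$, the inserted letter never bumps a box of value at most $2$ out of the increasing part of a row; but a chain of bumps can carry a value at most $2$ downward through the weakly decreasing parts and so reach the rows of $\beta$ and $\gamma$, where $P$ holds a $2$ and $e_{\bar 1}(P)$ holds a $1$. The crux is to use the hook-word and decomposition-tableau constraints—together with the fact that $\beta$ and $\gamma$ are the first $\{2',2\}$- and $\{1',1\}$-boxes in reverse reading order—to show that this single $2$-versus-$1$ discrepancy never forces the two insertions to bump different columns or to relocate a middle element differently, so that the only surviving difference between the outputs is exactly the prime toggle prescribed by $e_{\bar 1}$. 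I expect this to come down, within each geometric subcase, to a short analysis of whether the insertion halts above, at, or below the rows of $\beta$ and $\gamma$.
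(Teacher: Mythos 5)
Your opening reduction (using $\unprime$ and the unprimed result from \cite{GJKKK} to pin down the shape and unprimed entries, so that only the primed positions and the parity remain to be checked) is a reasonable reframing and is consistent with how the paper handles Lemmas~\ref{main-lem1} and \ref{main-lem2}. However, the heart of the lemma is the case analysis you defer with ``I would then verify'' and ``I expect this to come down to a short analysis,'' and the one concrete claim you make about that analysis is wrong in the decisive subcase. You assert that the single $2$-versus-$1$ discrepancy at the box $\beta=(a_2,b_2)$ ``never forces the two insertions to bump different columns.'' But when $a_1>a_2$ and the inserted letter bumps the middle element of row $a_2$, the insertion into $P$ passes $2^\bullet$ down to the sub-tableau $T$ of lower rows, while the insertion into $e_{\bar 1}(P)$ passes $1^\circ$ down to the corresponding sub-tableau $\tilde T$. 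These are different values entering essentially the same tableau, and they genuinely bump different entries in general; the two processes are \emph{not} column-for-column identical from that point on. The paper resolves this by observing that $T$ and $\tilde T$ are related exactly as in Lemma~\ref{bar-lem1}, so that $e_{\bar 1}(2^\bullet \toQQ T) = 1^\circ \toQQ \tilde T$, i.e.\ the lower portions of the two outputs are related by $e_{\bar 1}$ rather than being equal. Your plan cites Lemma~\ref{bar-lem1} only as a stylistic model, not as the tool needed here, and without it the argument for this subcase collapses.

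A similar issue arises in the $a_1=a_2$ case when the middle box $(a_1,b_1)$ is bumped: a $1^\circ$ is inserted into the next row for one tableau and a $1^\ast$ (possibly differently primed) for the other, and one must track, as in the proof of Proposition~\ref{01-prop}, that these two sub-insertions differ only in the prime on the first box containing $1'$ or $1$ in reverse reading order. So the missing content is not a routine verification: it requires identifying precisely which earlier lemma governs each divergent subcase ($e_{\bar 1}$-relatedness via Lemma~\ref{bar-lem1} when a $2$ versus a $1$ is passed down, and the prime-tracking argument of Proposition~\ref{01-prop} when two differently primed $1$'s are passed down), plus the observation, special to $a_1=a_2$, that the inserted number can never bump the box $(a_2,b_2)=(a_1,b_1+1)$ from either tableau. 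As written, the proposal is a plausible outline with a false load-bearing claim rather than a proof.
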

 
 \begin{proof}
 Suppose the first box of $P$ containing $1'$ or $1$ 
 in the reverse row reading word order is $(a_1,b_1)$
 and the first box of $P$ containing $2'$ or $2$ in the reverse row reading word
 order if $(a_2,b_2)$.
 Since $e_{\bar 1}(P)\neq0$, we either have $a_1>a_2$,
 in which case both $(a_1,b_1)$ and $(a_2,b_2)$ are the middle positions in their rows,
 or $a_1=a_2$, in which case $(a_1,b_1)$ is the middle position in its row
 and $b_1 + 1 = b_2$.
 
Recall that if $P_{a_1b_1} = 1^\circ$ and $P_{a_2b_2} = 2^\bullet$
 then $e_{\bar 1}(P)$ is formed from $P$
 by changing the entries just named to $1^\bullet$ and $1^\circ$ respectively.
 When $a_1=a_2$, this operation moves the middle position in row $a_1$ one column to the right, but otherwise all middle positions in $P$ and $e_{\bar 1}(P)$ are the same.

 Let $T$ and $\tilde T$ 
 be the sub-tableaux composed of the rows of $P$ 
and $e_{\bar 1}(P)$ after row $a_2$.
The insertions $x\toQQ P$ and $x \toQQ e_{\bar 1}(P)$
 will proceed in exactly the same way until row $a_2$,
 where some number $y>2$ will be inserted in both cases.
 
  First assume $a_1>a_2$. 
 Then $(a_2,b_2)$ is the middle position in its row in both $P$ and $e_{\bar 1}(P)$,
 so it will be bumped in both insertions or in neither:
 \bei
 \item
 In the bumped case, the first $a_2$ rows of $x\toQQ P$ and $x \toQQ e_{\bar 1}(P)$
 will be identical, while the remaining rows 
 of $x\toQQ P$ and $x \toQQ e_{\bar 1}(P)$ will be $2^\bullet \toQQ T$ 
 and $1^\circ \toQQ \tilde T$, respectively.
Comparing the latter tableaux is exactly the situation of Lemma~\ref{bar-lem1},
which implies that $e_{\bar 1}(2^\bullet \toQQ T) = 1^\circ \toQQ \tilde T$.
Since the first $a_2$ rows of $x\toQQ P$ and $x \toQQ e_{\bar 1}(P)$
contain no entries in $\{1',1,2',2\}$,
we deduce that $e_{\bar 1}(x \toQQ P) =x\toQQ e_{\bar 1}(P)$
and the insertions $x\toQQ P$ and $x \toQQ e_{\bar 1}(P)$ are both even or both odd.
\item
Assume instead that $(a_2,b_2)$ is not bumped in either insertion.
Then the middle positions of row $a_2$ in $P$ and $e_{\bar 1}(x)$
 will not change, and the same number $z$ will be inserted
 into rows $a_2+1$ of both $P$ and $e_{\bar 1}(P)$.
 In this case the first $a_2$ rows of  
 $x\toQQ P$ and $x \toQQ e_{\bar 1}(P)$
 will
 only differ in box $(a_2,b_2)$, which is the first box in  $x\toQQ P$
with an entry in $\{2',2\}$ and the first box in $x\toQQ e_{\bar 1}(P)$
with an entry in $\{1',1\}$.
Moreover,
 the rows of $P$ and $e_{\bar 1}(P)$ after row $a_2$
will just be $z \toQQ T$ and $z \toQQ\tilde T$, respectively.
It follows as in Proposition~\ref{01-prop}
that $y \toQQ T$ and $y \toQQ\tilde T$ will be identical
except that the first tableau will have $1^\circ$ while the other tableau will have $1^\bullet$ in the first box with an entry in $\{1',1\}$ in the reverse row reading order.
Therefore, we again have $e_{\bar 1}(x \toQQ P) =x\toQQ e_{\bar 1}(P)$
and the two insertions are both even or both odd.
\eei
 
Now assume $a_1=a_2$.  Then  $P_{a_2b_2} = 2^\bullet = 2$
and $e_{\bar 1}(P)_{a_1b_1} = 1^\bullet = 1$ since these entries are not in the middle boxes of the row.
The inserted number $y>2$ cannot bump position $(a_2,b_2) = (a_1,b_1+1)$
from $P$ or from $e_{\bar 1}(P)$, since in the first tableau this box belongs to the increasing part of the row,
and in the second tableau it is part of the weakly decreasing part and has the same entry as the box to its left when primes are ignored.
Therefore, if $y$ bumps an entry from the increasing part of row $a_1$ in either tableaux,
then it will bump the same primed number $z^\ast>2$ from both
(here $z \in \PP$ and $\ast$ denotes another arbitrary, unspecified choice of prime),
and this number will
cause
 $(a_1,b_1)$ to be bumped from the weakly decreasing part   in both insertions or in neither:
  \bei
  
 \item
 Suppose  $(a_1,b_1)$ is bumped in both insertions.
 Then for $x\toQQ P$,
  the middle entry in row $a_1$ 
will move from column $b_1$ to column $b_2=b_1+1$ and change from $1^\circ$ to $2^\ast$,
and then $1^\circ$ will be inserted into row $a_1+1$.
But 
for $x\toQQ e_{\bar 1}(P)$,
the middle entry in row $a_1$ 
will remain $1^\circ$ in column $b_2$,
and then $1^\ast $ will be inserted into row $a_1+1$.
We have $T=\tilde T$ as $a_1=a_2$, and 
it follows as in Proposition~\ref{01-prop}
that $1^\circ \toQQ T$ 
and $1^\ast \toQQ T$ are identical
except that the first tableau has $1^\circ$ while the other tableau has $1^\ast$ in the first box with an entry in $\{1',1\}$ in the reverse row reading order.
As $(a_2,b_2)$ remains the first box of $x\toQQ P$ containing $2'$ or $2$
and the first box of $x\toQQ e_{\bar 1}(P)$ containing $1'$ or $1$,
 we have 
  $e_{\bar 1}(x \toQQ P) =x\toQQ e_{\bar 1}(P)$
and the two insertions are both even or both odd.

 \item Otherwise, $x \toQQ P$ and $x\toQQ e_{\bar 1}(P)$
 will be identical outside boxes $(a_1,b_1)$ and $(a_2,b_2)$ and the two insertions will   bump the same entries on all iterations.
As the boxes of $x \toQQ P$ before $(a_2,b_2)$ in the reverse row reading word order contain only entries greater than $2$, 
 $e_{\bar 1}(x \toQQ P) =x\toQQ e_{\bar 1}(P)$
and the two insertions are both even or both odd.
\eei
This case analysis completes the proof of the lemma.
  \end{proof}

\begin{proposition}\label{bar1-prop}
Let $w=w_1w_2\cdots w_m \in (\BB_n^+)^{\otimes m} $.
Then
$ \Pqq(e_{\bar 1}( w)) = e_{\bar 1} (\Pqq(w))$,
and if $e_{\bar 1}(w)\neq0$ then $\Qqq(e_{\bar 1}(w)) = \Qqq(w)$.
\end{proposition}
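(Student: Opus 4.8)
The plan is to induct on the length $m$, peeling off the first letter $w_1$ and setting $v=w_2\cdots w_m$, $P=\Pqq(v)$, so that $\Pqq(w)=w_1\toQQ P$. Before any case analysis I would dispose of two easy reductions. First, Lemma~\ref{main-lem1} gives $\unprime(\Pqq(e_{\bar 1}(w)))=\unprime(e_{\bar 1}(\Pqq(w)))$, so the two tableaux are simultaneously zero; in particular if $e_{\bar 1}(w)=0$ then $e_{\bar 1}(\Pqq(w))=0$ as well and there is nothing to prove, and the claim about $\Qqq$ is vacuous. Hence I may assume $e_{\bar 1}(w)\neq 0$. Second, if $w$ has no letter in $\{1',1\}$, then $\Pqq(w)$ has no such entry either (insertion preserves the underlying content), and the signature rule of Proposition~\ref{sign-ex} shows $e_{\bar 1}$ coincides with $e_1$ on both $w$ and $\Pqq(w)$; the claim then follows directly from the $i=1$ case of Proposition~\ref{01-prop}. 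So I may additionally assume $w$ has a letter in $\{1',1\}$.

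Next I would split on $w_1$ according to the tensor product rule \eqref{eb1-eq} for $e_{\bar 1}$ on $\BB^+_n\otimes(\BB^+_n)^{\otimes(m-1)}$. The letter $w_1$ cannot lie in $\{1',1\}$, since then $e_{\bar 1}(w_1)=0$ would force $e_{\bar 1}(w)=0$; thus the assumed $\{1',1\}$ letter of $w$ lies in $v$, so $P$ has a box with entry in $\{1',1\}$. If $\lceil w_1\rceil>2$ then $e_{\bar 1}(w_1)=f_{\bar 1}(w_1)=0$, so $e_{\bar 1}(w)=w_1\cdot e_{\bar 1}(v)$ and $e_{\bar 1}(v)\neq 0$. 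The inductive hypothesis gives $\Pqq(e_{\bar 1}(v))=e_{\bar 1}(P)$ and $\Qqq(e_{\bar 1}(v))=\Qqq(v)$, and since $P$ now has entries in both $\{1',1\}$ and $\{2',2\}$ with $e_{\bar 1}(P)\neq 0$, Lemma~\ref{bar-lem2} yields $e_{\bar 1}(w_1\toQQ P)=w_1\toQQ e_{\bar 1}(P)$ with the two insertions of equal parity. This gives $\Pqq(e_{\bar 1}(w))=e_{\bar 1}(\Pqq(w))$, while the equal parity of the final inserted box together with $\Qqq(e_{\bar 1}(v))=\Qqq(v)$ gives $\Qqq(e_{\bar 1}(w))=\Qqq(w)$.

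The remaining case is $w_1=2^\bullet\in\{2',2\}$. Here I would use the correspondence established in Proposition~\ref{01-prop} between $e_0,f_0$ on words and the toggling of the prime on the first $\{1',1\}$ box of $P$ in reverse-row-reading order, in order to match the three outcomes of \eqref{eb1-eq} against the single conclusion of Lemma~\ref{bar-lem1}. Write $1^\circ$ for that first box of $P$. When $w_1=2$ and $e_0(v)\neq 0$, rule \eqref{eb1-eq} gives $e_{\bar 1}(w)=1'\cdot e_0(v)$; since $e_0$ changes the leading $1'$ of $v$ to $1$, Proposition~\ref{01-prop} identifies $\Pqq(e_0(v))=e_0(P)=\tilde P$, the toggle of $P$, and the leading $1'=1^\circ$, so $\Pqq(e_{\bar 1}(w))=1^\circ\toQQ\tilde P$. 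The case $w_1=2'$ with $f_0(v)\neq 0$ is symmetric via $f_0$. In every other subcase the prime of $\tilde x=1^\circ$ already agrees with $\bullet$, so $\tilde P=P$ and $e_{\bar 1}(w)=1^\bullet\cdot v$. In each subcase Lemma~\ref{bar-lem1} gives exactly $e_{\bar 1}(2^\bullet\toQQ P)=1^\circ\toQQ\tilde P=\Pqq(e_{\bar 1}(w))$ with the two insertions of equal parity; combined with $\Qqq(e_0(v))=\Qqq(f_0(v))=\Qqq(v)$ from Proposition~\ref{01-prop}, this also yields $\Qqq(e_{\bar 1}(w))=\Qqq(w)$, completing the induction.

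The technical heart of the argument is already carried by Lemmas~\ref{bar-lem1} and~\ref{bar-lem2}, so the main obstacle here is bookkeeping: correctly translating the three-way tensor formula \eqref{eb1-eq} into the single insertion identity of Lemma~\ref{bar-lem1}, and in particular verifying that the prime $\circ$ on the bumped letter $\tilde x$ produced by the word-level operation is precisely the prime of the first $\{1',1\}$ box of $P$ named in that lemma. Getting this prime-tracking right, and confirming the even/odd parity bookkeeping so that the recording tableaux $\Qqq$ agree box by box, is where I would be most careful.
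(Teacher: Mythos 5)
Your proposal is correct and follows essentially the same route as the paper: the same reduction of the degenerate cases via Lemma~\ref{main-lem1} and the $e_1$-identification, Lemma~\ref{bar-lem1} for the critical insertion of the leading $\{2',2\}$ letter (with Proposition~\ref{01-prop} supplying the prime-toggle correspondence between the first $\{1',1\}$ letter of the suffix and the first $\{1',1\}$ box of its insertion tableau), and Lemma~\ref{bar-lem2} for the letters greater than $2$. The only cosmetic difference is that you package the passage over the prefix $w_1\cdots w_{j-1}$ as a one-letter-at-a-time induction and phrase the key case through the tensor formula \eqref{eb1-eq}, whereas the paper iterates Lemma~\ref{bar-lem2} over that prefix in one step and works from the signature rule.
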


\begin{proof}
Let $j,k\in[m]$ be minimal with $w_j \in \{2',2\}$ and $w_k \in \{1',1\}$.
 If $j$ does not exist 
 or $j>k$, then $e_{\bar 1}(w) = 0$ so $P(e_{\bar 1}(w)) = 0$ by Lemma~\ref{main-lem1}.

If $j$ exists but $w$ has no letters   equal to $1'$ or $1$ (so that $k$ is undefined),
then the same is true of the reverse row reading word of $\Pqq(w)$.
In this case $e_{\bar 1}$ has the same effect on $w$ and $\Pqq(w)$
as $e_1$,
so we have
\[\label{1b-check-eq1}
 \Pqq(e_{\bar 1}(w))
=
\Pqq(e_{ 1}(w))
=
e_1(\Pqq(w))
 = e_{\bar 1}(\Pqq(w)) 
\]
and
$
\Qqq(e_{\bar 1}(w)) = 
\Qqq(e_{  1}(w)) =
 \Qqq(w)
 $ as desired.

 Finally suppose $j$ and $k$ are both defined with $j<k$.
 Let $\tilde w = e_{\bar 1}(w)$ so that if $w_j = 2^\circ$ and $w_k = 1^\bullet$
 (using the superscripts to denote an arbitrary, unspecified choices of primes)
 then $\tilde w_j = 1^\bullet$ and $\tilde w_k = 1^\circ$.
 By Proposition~\ref{01-prop}
 the tableaux $P:=\Pqq(\tilde w_{j+1} \cdots \tilde w_m)$ and 
 $\tilde P:= \Pqq(w_{j+1}\cdots w_m)$ 
 are related as in Lemma~\ref{bar-lem1},
 and we have $\Qqq( w_{j+1} \cdots  w_m)=\Qqq( \tilde w_{j+1} \cdots  \tilde w_m)$, 
 so  Lemma~\ref{bar-lem1} implies that
\[
e_{\bar 1}(\Pqq(w_j\cdots w_m)) = e_{\bar 1}(w_j \toQQ P) = \tilde w_j \toQQ \tilde P = 
\Pqq(\tilde w_j \cdots \tilde w_m)
\]
and $\Qqq( w_{j} \cdots  w_m)=\Qqq( \tilde w_{j} \cdots  \tilde w_m)$.
 Then, by iterating
 Lemma~\ref{bar-lem2}, we deduce that we have 
 \[\ba
e_{\bar 1}(\Pqq(w)) &= 
w_1 \toQQ \cdots \toQQ w_{j-1}\toQQ e_{\bar 1}(\Pqq(w_j\cdots w_m)) 
\\&=
\tilde w_1 \toQQ \cdots \toQQ \tilde w_{j-1}\toQQ 
\Pqq(\tilde w_j \cdots \tilde w_m) = \Pqq(e_{\bar 1}(w))
\ea\] 
and $\Qqq(e_{\bar 1}(w)) = \Qqq(w)$, as desired.
\end{proof}

\begin{proof}[Proof of Theorem~\ref{main-thm}]
Choose 
an index $i \in \{\bar 1,0,1,2,\dots,n-1\}$
and
a word $w \in (\BB_n^+)^{\otimes m}$.
By Propositions~\ref{01-prop} and \ref{bar1-prop}
we have $\Pqq(e_i(w)) = e_i(\Pqq(w))$,
and if $e_i(w) \neq 0$
then $\Qqq(e_i(w)) = \Qqq(w)$.
Thus, if $f_i(w) \neq 0$ then \[ e_i(\Pqq(f_i(w)))=\Pqq(e_i(f_i(w)))= \Pqq(w)\]
so   $\Pqq(f_i(w)) =f_i(\Pqq(w))$.
If  $f_i(\Pqq(w)) \neq 0$
then  some  $v \in (\BB_n^+)^{\otimes m}$
has \[\Pqq(v) = f_i(\Pqq(w))\quand \Qqq(v) = \Qqq(w)\] by Proposition~\ref{qq-bij-thm}.
Since for this word we have both
\[\Pqq(e_i(v)) = e_i(\Pqq(v)) = e_i(f_i(\Pqq(w))) = \Pqq(w)
\]
and
$ \Qqq(e_i(v)) = \Qqq(v) =\Qqq(w)$,
it follows by same theorem
that
 $e_i(v) =w$, and so $f_i(w) =v \neq 0$.
 Taking contrapositives, we conclude that if $f_i(w) = 0$
 then $f_i(\Pqq(w)) = 0$.
 Thus, more generally, we have
 $\Pqq(f_i(w)) = f_i(\Pqq(w))$, and if $f_i(w) \neq 0$ then $e_i(f_i(w)) = w \neq 0$ so
 \[\Qqq(f_i(w)) =\Qqq(e_i(f_i(w)))= \Qqq(w).\]

This shows that the map $w \mapsto \Pqq(w)$ in Theorem~\ref{main-thm},
which is evidently weight-preserving,
 also commutes with all $\q_n$ and $\qq_n$-crystal operators. Moreover, 
 the recording tableau $\Qqq$ is constant on the connected components
of 
$\BB_n^{\otimes m}$ and $ (\BB_n^+)^{\otimes m}$.
The set of all words $w$ in 
 $\BB_n^{\otimes m}$ or $ (\BB_n^+)^{\otimes m}$ with the same fixed recording tableau
 $Q = \Qqq(w)$
 is therefore a union of full subcrystals;
 these sets are actually connected as their images under $w \mapsto \Pqq(w)$
and
 connected crystals of the form $\DTab_n(\lambda)$
or $\DDTab_n(\lambda)$.

 It remains to show that the unions in Theorem~\ref{main-thm}
 are over the right sets of strict partitions; that is, we must explain why if $ w \in (\BB_n^+)^{\otimes m}$
 then $\Pqq(w)$ has at most $n$ rows (as it has $m$ boxes by definition).
 This holds by Lemma~\ref{nonempty-lem} since $\unprime(\Pqq(w))$ is a decomposition tableau.
\end{proof}

\subsection{Applications to normal crystals}\label{normal-sect}

Following \cite{GHPS,Marberg2019b,MT2023}, we define
a ($\gl_n$-, $\q_n$-, or $\qq_n$-) crystal to be \defn{normal} if each of its full subcrystals is isomorphic to a full subcrystal of a tensor power of the relevant standard crystal, as indicated in Examples~\ref{st-ex1}, \ref{st-ex2}, and \ref{st-ex3}.
In this definition we interpret the $0$th tensor of the standard crystal as the crystal 
that has a single element whose weight is $0 \in \ZZ^n$.

Normal $\gl_n$-crystals are sometimes called \defn{Stembridge crystals}, since they are characterized by the local \defn{Stembridge axioms} \cite{Stembridge2003}.
In all types, normal crystals are seminormal and preserved by 
disjoint unions and tensor products.

\begin{corollary}\label{thm-cor1}
If  $\lambda$ is a strict partition with at most $n$ parts, then the $\qq_n$-crystal
$\DDTab_n(\lambda)$ is connected and normal with highest weight $\lambda$.
\end{corollary}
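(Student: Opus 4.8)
The plan is to deduce the corollary from Theorem~\ref{main-thm}, Proposition~\ref{qq-bij-thm}, and Theorem~\ref{highest-thm}, so that the work reduces to assembling machinery already in place. Set $m = |\lambda|$ and fix a standard shifted tableau $Q$ of shape $\lambda$ with no primed entries; such a $Q$ exists, and $\DDTab_n(\lambda)\neq\emptyset$ by Lemma~\ref{nonempty-lem} since $\ell(\lambda)\leq n$.

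To prove connectedness and normality simultaneously, I would consider the level set $L_Q := \{w \in (\BB_n^+)^{\otimes m} : \Qqq(w) = Q\}$. By Theorem~\ref{main-thm} this is a full $\qq_n$-subcrystal of $(\BB_n^+)^{\otimes m}$, hence connected, and the quasi-isomorphism $\Pqq$ restricts to a crystal isomorphism of $L_Q$ onto some full subcrystal $\cC'$ of $\bigsqcup_\mu \DDTab_n(\mu)$. The key claim is that $\cC' = \DDTab_n(\lambda)$. Indeed, by Proposition~\ref{qq-bij-thm} the assignment $w \mapsto (\Pqq(w),\Qqq(w))$ is a bijection onto valid pairs, so fixing $Q$ shows $w \mapsto \Pqq(w)$ is a bijection from $L_Q$ onto $\{P : (P,Q)\text{ is valid}\} = \DDTab_n(\lambda)$; every preimage word indeed lies in $(\BB_n^+)^{\otimes m}$ because $\Pqq$ preserves weight, so $\weight(\Pqq(w)) \in \NN^n$ forces $\weight(w) \in \NN^n$.

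Hence $\DDTab_n(\lambda) = \cC'$ is the isomorphic image of the connected full subcrystal $L_Q$ of the tensor power $(\BB_n^+)^{\otimes m}$. This makes $\DDTab_n(\lambda)$ connected; and since a connected crystal has only itself as a full subcrystal, the isomorphism $L_Q \cong \DDTab_n(\lambda)$ is exactly the condition that $\DDTab_n(\lambda)$ be normal. For the highest weight, Theorem~\ref{highest-thm}(a) identifies $\Thighest_\lambda$ as the unique $\qq_n$-highest weight element, and it remains only to note $\weight(\Thighest_\lambda) = \lambda$: the $i$-th coordinate counts the boxes equal to $i$ or $i'$, namely $|\SD_\lambda^{(i)}|$, and the border-strip decomposition of a shifted shape satisfies $|\SD_\lambda^{(i)}| = \lambda_i$ (the same computation underlying the $\q_n$ statement in \cite{GJKKK}).

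The one step requiring genuine care is the identification $\cC' = \DDTab_n(\lambda)$: one must combine the fiberwise bijection of Proposition~\ref{qq-bij-thm} with the observation that fixing a single recording tableau $Q$ already sweeps out all of $\DDTab_n(\lambda)$. This is precisely what forces the whole set $\DDTab_n(\lambda)$ to be one connected component, rather than a union of several, and thereby upgrades the component-by-component quasi-isomorphism into honest connectedness and normality.
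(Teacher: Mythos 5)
Your argument is correct, but it assembles different machinery than the paper does. The paper's proof is a one-liner citing Theorems~\ref{revrow-thm} and \ref{highest-thm}: the reverse reading word realizes $\DDTab_n(\lambda)$ as a union of full subcrystals of $(\BB_n^+)^{\otimes |\lambda|}$ (which gives normality of each component), and the uniqueness of the $\qq_n$-highest weight element then forces connectedness, granted that every full subcrystal of a tensor power contains a highest weight element. You instead fix a recording tableau $Q$ and combine Theorem~\ref{main-thm} with the fiberwise bijection of Proposition~\ref{qq-bij-thm} to identify $\DDTab_n(\lambda)$ with the single fiber $L_Q = \Qqq^{-1}(Q)$, which is a full subcrystal by the last assertion of Theorem~\ref{main-thm}. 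This buys you connectedness directly, with no appeal to the existence of highest weight elements in arbitrary components; the trade-off is that you lean on the final claim of Theorem~\ref{main-thm}, whose proof in the paper already quietly asserts the connectedness of $\DDTab_n(\lambda)$, so an audit of logical dependencies brings the two routes closer than they first appear. Two minor points. First, the step ``$\Pqq$ preserves weight, so the preimage word has letters at most $n$'' is better phrased as content preservation: insertion permutes the multiset of entries, so the letters of $w$ and the entries of $\Pqq(w)$ agree up to primes; this matters because for fixed $Q$ the valid partners $P$ in Proposition~\ref{qq-bij-thm} a priori range over all of $\DDTab(\lambda)$, and it is exactly this content argument that cuts the fiber down to $\DDTab_n(\lambda)$. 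Second, your verification that $\weight(\Thighest_\lambda)=\lambda$ via $|\SD_\lambda^{(i)}|=\lambda_i$ is correct and supplies a detail the paper leaves implicit.
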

\begin{proof}
These properties are immediate from Theorems~\ref{revrow-thm} and \ref{highest-thm}.
\end{proof}

One motivation for the results in this article was to provide a simpler
and more intuitive proof of the following theorem, which was our main result in \cite{MT2023}.

\begin{theorem}[\cite{MT2023}]\label{normal-thm}
A connected normal $\qq_n$-crystal
has a unique $\qq_n$-highest weight element, whose
 weight    is a  strict partition $\lambda$ with at most $n$ parts. 
Conversely, for each strict partition $\lambda$ with at most $n$ parts,
there is a connected normal $\qq_n$-crystal with highest weight $\lambda$.
Finally, there is a unique isomorphism between any two connected normal $\qq_n$-crystals with the same highest weight.
\end{theorem}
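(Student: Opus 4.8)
The plan is to reduce everything to the concrete model $\DDTab_n(\lambda)$ and then invoke the results already proved for it. The central reduction is the following: \emph{every connected normal $\qq_n$-crystal is isomorphic to $\DDTab_n(\lambda)$ for a unique strict partition $\lambda$ with $\ell(\lambda)\le n$.} To see this, let $\cB$ be connected and normal. By the definition of normality and connectedness, $\cB$ is isomorphic to a single full subcrystal (that is, a connected component) of $(\BB_n^+)^{\otimes m}$ for some $m$. Theorem~\ref{main-thm} tells us that $\Pqq$ restricts to an isomorphism from this component onto a full subcrystal of $\bigsqcup_\lambda \DDTab_n(\lambda)$; since each $\DDTab_n(\lambda)$ is itself connected by Corollary~\ref{thm-cor1}, the target component is exactly some $\DDTab_n(\lambda)$ with $\ell(\lambda)\le n$. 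Hence $\cB\cong \DDTab_n(\lambda)$.

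With this reduction in hand, the first assertion follows from Theorem~\ref{highest-thm}(a): the crystal $\DDTab_n(\lambda)$ has the unique $\qq_n$-highest weight element $\Thighest_\lambda$, whose weight is the strict partition $\lambda$. Here I would first record that a $\qq_n$-crystal isomorphism commutes with all operators $e_i,f_i$ for $i\in\{\bar 1,0,1,\dots,n-1\}$, hence with each string-reversing involution $\sigma_i$, and therefore with the twisted operators $e_{\bar i}$ and $e_0^{[i]}$; since it also preserves weights, it carries $\qq_n$-highest weight elements to $\qq_n$-highest weight elements. Transporting $\Thighest_\lambda$ across $\cB\cong\DDTab_n(\lambda)$ then yields the unique highest weight element of $\cB$, of weight $\lambda$. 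The converse (second assertion) is immediate from Corollary~\ref{thm-cor1}, which exhibits $\DDTab_n(\lambda)$ as a connected normal $\qq_n$-crystal of highest weight $\lambda$.

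For the final assertion, existence of an isomorphism between two connected normal crystals $\cB,\cC$ of highest weight $\lambda$ follows because the highest weight determines $\lambda$ (as $\weight(\Thighest_\lambda)=\lambda$ and distinct strict partitions have distinct weights), so both are isomorphic to $\DDTab_n(\lambda)$ and hence to each other. For uniqueness I would use a rigidity argument. Given two isomorphisms $\phi,\psi\colon\cB\to\cC$, set $\alpha:=\psi^{-1}\phi\in\Aut(\cB)$. Each of $\phi,\psi$ sends the unique highest weight element $b_0$ of $\cB$ to the unique highest weight element of $\cC$, so $\alpha(b_0)=b_0$. Because $\cB$ is connected, every $b\in\cB$ can be written as $b=g_k\cdots g_1(b_0)$ for some operators $g_j\in\{e_i,f_i : i\in\{\bar 1,0,1,\dots,n-1\}\}$; since $\alpha$ commutes with each $g_j$ and fixes $b_0$, we get $\alpha(b)=b$. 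Thus $\alpha=\id$ and $\phi=\psi$.

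The genuinely hard work lives entirely in the prior results---above all the insertion theorem (Theorem~\ref{main-thm}) and the highest-weight computation (Theorem~\ref{highest-thm})---so relative to those the present argument is mostly bookkeeping. The one step that I expect to require real care, and which I regard as the main obstacle to state correctly, is verifying that a $\qq_n$-crystal isomorphism preserves the somewhat technical notion of a $\qq_n$-highest weight element; this rests on the isomorphism commuting with the involutions $\sigma_i$ and hence with the derived operators $e_{\bar i}$ and $e_0^{[i]}$, rather than merely with the $\gl_n$-operators.
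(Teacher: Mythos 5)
Your proposal is correct and follows essentially the same route as the paper: reduce a connected normal $\qq_n$-crystal to $\DDTab_n(\lambda)$ via Theorem~\ref{main-thm}, invoke Theorem~\ref{highest-thm} for the unique highest weight element, cite Corollary~\ref{thm-cor1} for existence, and use rigidity of crystals with unique highest weight elements for uniqueness of the isomorphism. You simply spell out two steps the paper leaves implicit (that isomorphisms commute with the $\sigma_i$ and hence with $e_{\bar i}$ and $e_0^{[i]}$, and the connectedness argument forcing an automorphism fixing $b_0$ to be the identity), both of which are correct.
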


\begin{proof}
Suppose $\cB$ is a connected normal $\qq_n$-crystal. Then $\cB \cong \DDTab_n(\lambda)$
for some strict partition $\lambda$ with $\ell(\lambda)\leq n$ by Theorem~\ref{main-thm}.
Therefore $\cB$ has a unique $\qq_n$-highest weight element by Theorem~\ref{highest-thm},
and the weight of this element is $\lambda$.
The second assertion in the theorem is just Corollary~\ref{thm-cor1}. The last claim holds since 
there is at most one isomorphism between any two crystals with unique highest weight elements.
\end{proof}

By essentially the same argument, one can derive a $\q_n$-version of this theorem
(see, for example, \cite[Thm.~1.3]{MT2023}); this proof strategy is similar to what appears in \cite{GJKKK}.
There is also a classical version of Theorem~\ref{normal-thm}
for normal $\gl_n$-crystals;
 see \cite[Thms.~3.2 and 8.6]{BumpSchilling}.
The existence of unique highest weight elements guaranteed by these theorems implies the following fundamental property:

\begin{corollary}\label{atmost-cor}
Fix $\g \in \{\gl_n, \q_n, \qq_n\}$ and suppose $\cB$ and $\cC$ are connected normal $\g$-crystals.
Then there is at most one $\g$-crystal isomorphism $\cB \xrightarrow{\sim} \cC$.
\end{corollary}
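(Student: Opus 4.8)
The plan is to combine the uniqueness of highest weight elements (supplied by Theorem~\ref{normal-thm} together with its $\q_n$- and $\gl_n$-analogues recalled just above) with the connectivity of $\cB$. Write $b_0$ for the unique $\g$-highest weight element of $\cB$ and $c_0$ for that of $\cC$, and suppose $\phi, \psi : \cB \xrightarrow{\sim} \cC$ are two $\g$-crystal isomorphisms. The goal is to show $\phi = \psi$, and the key intermediate claim is that \emph{any} isomorphism must send $b_0$ to $c_0$.

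First I would check that a $\g$-crystal isomorphism preserves the property of being $\g$-highest weight. By definition an isomorphism is a weight-preserving bijection commuting with every basic crystal operator ($e_i, f_i$ for $\gl_n$; additionally $e_{\bar 1}, f_{\bar 1}$ for $\q_n$; additionally $e_0, f_0$ for $\qq_n$), with $\phi(0) = 0$. Thus if $e_i(b) = 0$ then $e_i(\phi(b)) = \phi(e_i(b)) = 0$, and likewise for $e_{\bar 1}$ and $e_0$. The remaining operators appearing in the definitions of $\q_n$- and $\qq_n$-highest weight, namely the twisted operators $e_{\bar i}$ (see \eqref{bar-i-eq}) and $e_0^{[i]}$, are built out of the basic operators together with the string-reversing involutions $\sigma_i$. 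Since each $\sigma_i$ is determined purely by the $\xrightarrow{i}$ edges of the crystal graph, and an isomorphism carries $i$-strings bijectively onto $i$-strings preserving their linear order, $\phi$ commutes with every $\sigma_i$ and hence with every $e_{\bar i}$ and $e_0^{[i]}$. Consequently $\phi$ sends $\g$-highest weight elements to $\g$-highest weight elements. As $\cC$ has a unique such element, we conclude $\phi(b_0) = c_0$, and likewise $\psi(b_0) = c_0$.

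Next I would use connectivity. Because $\cB$ is connected, every $b \in \cB$ can be written as $b = g_k \cdots g_1(b_0)$ for some finite sequence of operators $g_1, \dots, g_k$ drawn from $\{e_i, f_i\}$ (reading a walk from $b_0$ to $b$ in the crystal graph, traversing each edge forward as an $f_i$ or backward as an $e_i$). Since both $\phi$ and $\psi$ commute with every such $g_j$ and agree on $b_0$, we get
\[
\phi(b) = g_k \cdots g_1(\phi(b_0)) = g_k \cdots g_1(c_0) = g_k \cdots g_1(\psi(b_0)) = \psi(b),
\]
so $\phi = \psi$.

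The main obstacle is the verification in the second step that isomorphisms commute with the auxiliary operators $\sigma_i$, $e_{\bar i}$, and $e_0^{[i]}$, so that they genuinely preserve the type-specific highest weight condition; this is what makes the argument uniform across $\gl_n$, $\q_n$, and $\qq_n$. Once this is granted, the rest is a short propagation argument. I would therefore present the $\sigma_i$-commutation as the only point needing care, emphasizing that it reduces to the observation that string reversal is an intrinsic feature of the crystal graph and hence is preserved by any graph isomorphism.
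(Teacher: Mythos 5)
Your proposal is correct and follows essentially the same route as the paper, which simply asserts (in the sentence preceding the corollary and at the end of the proof of Theorem~\ref{normal-thm}) that there is at most one isomorphism between connected crystals with unique highest weight elements. You have merely filled in the details the paper leaves implicit --- in particular the observation that an isomorphism commutes with the string-reversing involutions $\sigma_i$ and hence with the twisted operators $e_{\bar i}$ and $e_0^{[i]}$, so that it must match up the unique highest weight elements, after which connectivity forces agreement everywhere.
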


Recall that $\revrow$ is the reverse reading word of a tableau.
  Since the composition $\Pqq \circ \revrow :  \DDTab_n(\lambda) \xrightarrow{\sim} \DDTab_n(\lambda)$  
  is a $\qq_n$-isomorphism by Theorems~\ref{revrow-thm} and \ref{main-thm}, it must coincide with the identity map. Therefore: 
  
  \begin{corollary}\label{P-revrow-cor}
If $T \in \DDTab_n(\lambda)$ then $\Pqq(\revrow(T)) = T$.
\end{corollary}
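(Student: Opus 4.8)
The plan is to avoid any direct combinatorial analysis of the insertion algorithm and instead exploit the crystal-theoretic machinery already assembled. The composition $\Pqq \circ \revrow$ is a self-map of the connected normal $\qq_n$-crystal $\DDTab_n(\lambda)$, and the key idea is that such a crystal admits \emph{only one} endomorphism that is an isomorphism onto a connected normal crystal of the same highest weight, namely the identity. So the entire argument reduces to checking that $\Pqq \circ \revrow$ is a well-defined $\qq_n$-crystal automorphism of $\DDTab_n(\lambda)$, after which uniqueness does all the work.

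Concretely, I would proceed in four steps. First, by Theorem~\ref{revrow-thm} the map $\revrow : \DDTab_n(\lambda) \to (\BB^+_n)^{\otimes|\lambda|}$ is a $\qq_n$-crystal embedding, and since $\DDTab_n(\lambda)$ is connected by Corollary~\ref{thm-cor1}, its image is a single connected full subcrystal of $(\BB^+_n)^{\otimes|\lambda|}$. Second, by Theorem~\ref{main-thm} the map $\Pqq$ is a $\qq_n$-crystal quasi-isomorphism, so restricting $\Pqq$ to this connected full subcrystal gives a $\qq_n$-crystal isomorphism onto some connected component $\DDTab_n(\mu)$ with $|\mu| = |\lambda|$. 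Hence $\Pqq \circ \revrow : \DDTab_n(\lambda) \xrightarrow{\sim} \DDTab_n(\mu)$ is a $\qq_n$-crystal isomorphism. Third, because a crystal isomorphism preserves weights and commutes with all crystal operators, it carries the unique $\qq_n$-highest weight element of $\DDTab_n(\lambda)$ to that of $\DDTab_n(\mu)$; comparing their weights via Corollary~\ref{thm-cor1} forces $\mu = \lambda$. Thus $\Pqq \circ \revrow$ is an automorphism of $\DDTab_n(\lambda)$.

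Finally, the identity map is trivially a $\qq_n$-crystal isomorphism $\DDTab_n(\lambda) \to \DDTab_n(\lambda)$, and Corollary~\ref{atmost-cor} guarantees there is \emph{at most one} such isomorphism between connected normal $\qq_n$-crystals. Therefore $\Pqq \circ \revrow$ coincides with the identity, which is exactly the assertion $\Pqq(\revrow(T)) = T$ for every $T \in \DDTab_n(\lambda)$.

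I do not expect a genuine obstacle here, since the statement is essentially a formal consequence of the structural results already proved. The one point requiring care—and the only place a careless proof could go wrong—is the verification in the third step that the composition lands back in $\DDTab_n(\lambda)$ rather than in some other $\DDTab_n(\mu)$; this is precisely where the highest-weight bookkeeping of Corollary~\ref{thm-cor1} is needed, and it is what licenses the appeal to the uniqueness statement in Corollary~\ref{atmost-cor}.
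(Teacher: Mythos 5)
Your proposal is correct and follows essentially the same route as the paper: the paper likewise observes that $\Pqq\circ\revrow$ is a $\qq_n$-crystal isomorphism of $\DDTab_n(\lambda)$ by Theorems~\ref{revrow-thm} and \ref{main-thm} and concludes via the uniqueness of isomorphisms between connected normal crystals (Corollary~\ref{atmost-cor}) that it is the identity. You merely spell out the intermediate bookkeeping (connectedness of the image, the highest-weight comparison forcing $\mu=\lambda$) that the paper leaves implicit.
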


In our previous work \cite[Thm~7.16]{MT2023}, we identified a connected normal $\qq_n$-crystal with
unique highest weight $\lambda$ on the set of semistandard shifted tableaux $\SShTab_n(\lambda)$ (allowing diagonal primes). This extends a connected normal $\q_n$-crystal structure on $\ShTab_n(\lambda)$ (excluding diagonal primes) studied in \cite{AssafOguz,HPS,Hiroshima2018}.
There must be an isomorphism $\DDTab_n(\lambda) \xrightarrow{\sim} \SShTab_n(\lambda)$ by the previous theorem.

We can identify this isomorphism, though we must outsource the key definitions.
For a primed word $w$, define its \defn{mixed insertion tableau} $\PHM(w)$ as in \cite[Def.~5.17]{M2021a}.
This is a semistandard shifted tableau by \cite[Cor.~5.22]{M2021a}.

\begin{proposition}\label{normal-prop1}
Suppose  $\lambda$ is a strict partition with at most $n$ parts.
Then $ \PHM\circ \revrow$
is a $\qq_n$-crystal isomorphism
$  \DDTab_n(\lambda) \to \SShTab_n(\lambda)$
 which restricts to a $\q_n$-crystal isomorphism
$\DTab_n(\lambda) \to \ShTab_n(\lambda)$.
\end{proposition}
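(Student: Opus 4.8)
The plan is to reduce everything to the rigidity of connected normal crystals. By Corollary~\ref{thm-cor1}, $\DDTab_n(\lambda)$ is a connected normal $\qq_n$-crystal with highest weight $\lambda$, and by \cite[Thm.~7.16]{MT2023} the same is true of $\SShTab_n(\lambda)$. Hence Theorem~\ref{normal-thm} already produces a \emph{unique} $\qq_n$-crystal isomorphism $\DDTab_n(\lambda)\to\SShTab_n(\lambda)$, and Corollary~\ref{atmost-cor} says there is at most one crystal morphism between them that is an isomorphism. So the entire task is to verify that $\PHM\circ\revrow$ is a $\qq_n$-crystal isomorphism; it will then automatically coincide with the abstract map guaranteed by Theorem~\ref{normal-thm}.

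First I would locate the image of $\revrow$. Since $\revrow$ is a $\qq_n$-crystal embedding (Theorem~\ref{revrow-thm}) and $\DDTab_n(\lambda)$ is connected, its image is a single connected full subcrystal $C_0\subseteq(\BB^+_n)^{\otimes|\lambda|}$, which by Theorem~\ref{main-thm} is exactly a fiber of $\Qqq$. On $C_0$ the map $\Pqq$ is the two-sided inverse of $\revrow$, because $\Pqq\circ\revrow=\mathrm{id}$ by Corollary~\ref{P-revrow-cor} and both maps are bijections between $C_0$ and $\DDTab_n(\lambda)$. Thus it suffices to show that $\PHM$ restricts to a crystal isomorphism $C_0\to\SShTab_n(\lambda)$, after which $\PHM\circ\revrow=\PHM|_{C_0}\circ\revrow$ is the desired isomorphism.

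The hard part will be the key input that mixed insertion is compatible with the crystal operators, namely that $\PHM(e_i(w))=e_i(\PHM(w))$ and $\PHM(f_i(w))=f_i(\PHM(w))$ for every $i\in\{\bar1,0,1,\dots,n-1\}$ and every primed word $w$, where $\SShTab_n(\lambda)$ carries the crystal structure of \cite{MT2023}. This is a purely word-theoretic statement, and it is precisely the point at which the outsourced definitions of $\PHM$ from \cite{M2021a} and of the operators on shifted tableaux from \cite{MT2023,AssafOguz,HPS,Hiroshima2018} carry the weight; I would extract it from those references, or else prove it directly by the same signature-rule and middle-element bookkeeping used for $\Pqq$ in Section~\ref{pro-sect}. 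Granting this compatibility, $\PHM|_{C_0}$ is a weight-preserving crystal morphism into a connected normal crystal. It sends the unique highest weight element of $C_0$, whose weight is the partition $\lambda$, to a highest weight element of weight $\lambda$; since the unique highest weight element of any $\SShTab_n(\mu)$ has weight $\mu$, this forces the target component to be $\SShTab_n(\lambda)$. A weight-preserving crystal morphism of connected normal crystals matching highest weights is surjective (the target is generated from its highest weight by the $f_i$, and normality makes the string lengths agree), hence bijective since both finite sets have equal character; so $\PHM|_{C_0}$ is an isomorphism.

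Finally I would obtain the $\q_n$-statement by restriction. The unprimed tableaux $\DTab_n(\lambda)\subseteq\DDTab_n(\lambda)$ are exactly those whose reverse reading word lies in $\BB_n^{\otimes|\lambda|}$ by Theorem~\ref{revrow-thm}, and mixed insertion sends unprimed words to semistandard shifted tableaux with no primed diagonal entries, i.e.\ into $\ShTab_n(\lambda)$, by \cite{M2021a}. Because $\PHM\circ\revrow$ already commutes with $e_i,f_i,e_{\bar1},f_{\bar1}$, its restriction is a $\q_n$-crystal morphism $\DTab_n(\lambda)\to\ShTab_n(\lambda)$; as both are connected normal $\q_n$-crystals with highest weight $\lambda$ and therefore of equal cardinality, this injective restriction is a bijection, and hence the unique $\q_n$-isomorphism between them.
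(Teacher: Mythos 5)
Your argument is correct and follows essentially the same route as the paper: compose the embedding $\revrow$ of Theorem~\ref{revrow-thm} with mixed insertion, use the fact that $\PHM$ is a quasi-isomorphism onto $\bigsqcup_\mu \SShTab_n(\mu)$ (the paper cites this as \cite[Cor.~7.13]{MT2023}, which is exactly the operator-compatibility you flag as the outsourced key input), and match highest weights via \cite[Thm.~6.20]{MT2023} to pin down the target component, with the $\q_n$ case handled by restriction to unprimed words. The only cosmetic difference is your extra detour through Theorem~\ref{normal-thm} and Corollary~\ref{atmost-cor} to invoke uniqueness of the isomorphism, which the paper does not need.
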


\begin{proof}
The map $\revrow$ is a crystal embedding $\DDTab_n(\lambda) \to (\BB_n^+)^{\otimes |\lambda|}$ by Theorem~\ref{revrow-thm},
while $\PHM : (\BB_n^+)^{\otimes |\lambda|} \to \bigsqcup_{\mu}  \SShTab_n(\mu)$ (the union over $\mu\vdash|\lambda|$ strict with $\ell(\mu) \leq n$)
is a quasi-isomorphism by \cite[Cor.~7.13]{MT2023}.
Their composition is therefore a quasi-isomorphism, which must restrict to an isomorphism $  \DDTab_n(\lambda) \xrightarrow{\sim} \SShTab_n(\lambda)$ since $\SShTab_n(\mu)$ has highest weight $\mu$ by \cite[Thm.~6.20]{MT2023}.

When restricted to unprimed words, 
 $\PHM$ coincides with Haiman's original definition of \defn{shifted mixed insertion},
and the same argument using \cite[Thm.-Def.~2.12]{Marberg2019b} instead of \cite[Thm.~6.20]{MT2023} shows that $ \PHM\circ \revrow$ is also a $\q_n$-crystal isomorphism
$\DTab_n(\lambda) \xrightarrow{\sim} \ShTab_n(\lambda)$.
\end{proof}

The previous result implies that 
\be
\ba\ch(\DTab_n(\lambda)) &=\ch(\ShTab_n(\lambda))= P_\lambda(x_1,\dots,x_n),\\
\ch(\DDTab_n(\lambda)) &= \ch(\SShTab_n(\lambda))=Q_\lambda(x_1,\dots,x_n).\ea
\ee
Taking the limit as $n\to\infty$ produces the identities
\be\textstyle
P_\lambda = \sum_{T \in \DTab(\lambda)} x^T
\quand
Q_\lambda = \sum_{T \in \DDTab(\lambda)} x^T\ee
as well as the following statement.

\begin{corollary}\label{normal-cor3}
If $\lambda$ is a strict partition then  $\PHM\circ \revrow $ is a weight-preserving bijection
$
\DTab(\lambda) \to \ShTab(\lambda) $ and $ \DDTab(\lambda) \to \SShTab(\lambda)$.
\end{corollary}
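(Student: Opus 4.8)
The plan is to obtain the two global bijections as direct limits of the finite-rank isomorphisms furnished by Proposition~\ref{normal-prop1}. The essential observation is that $\revrow$ and $\PHM$ are defined intrinsically on primed words and tableaux, with no reference to the bound $n$; hence $\PHM\circ\revrow$ is a single well-defined operation on the set $\DDTab(\lambda)$ of all primed decomposition tableaux of shape $\lambda$, and its restriction to each $\DDTab_n(\lambda)$ is precisely the $\qq_n$-crystal isomorphism $\DDTab_n(\lambda)\xrightarrow{\sim}\SShTab_n(\lambda)$ supplied by Proposition~\ref{normal-prop1}. The same remark applies to the unprimed restriction $\DTab_n(\lambda)\xrightarrow{\sim}\ShTab_n(\lambda)$.

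Next I would record the two exhaustion identities. Since every tableau of shape $\lambda$ is finite, its entries are bounded by some integer $n$, so
\[
\DDTab(\lambda)=\bigcup_{n\geq\ell(\lambda)}\DDTab_n(\lambda)
\quand
\SShTab(\lambda)=\bigcup_{n\geq\ell(\lambda)}\SShTab_n(\lambda),
\]
and likewise $\DTab(\lambda)=\bigcup_n\DTab_n(\lambda)$ and $\ShTab(\lambda)=\bigcup_n\ShTab_n(\lambda)$; all four unions are increasing in $n$. Because each restriction $\PHM\circ\revrow\colon\DDTab_n(\lambda)\to\SShTab_n(\lambda)$ is a crystal isomorphism, it is weight-preserving by the definition of a crystal embedding, so the global map $\PHM\circ\revrow$ is weight-preserving as well.

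Finally I would assemble the bijection from these pieces. The image of $\PHM\circ\revrow$ on $\DDTab(\lambda)$ is $\bigcup_n(\PHM\circ\revrow)(\DDTab_n(\lambda))=\bigcup_n\SShTab_n(\lambda)=\SShTab(\lambda)$, which confirms both that the codomain is correct and that the map is surjective. For injectivity, any two tableaux $T,T'\in\DDTab(\lambda)$ lie in a common $\DDTab_n(\lambda)$ for $n$ large, on which the map is injective by Proposition~\ref{normal-prop1}, so $\PHM\circ\revrow(T)=\PHM\circ\revrow(T')$ forces $T=T'$. This shows $\PHM\circ\revrow\colon\DDTab(\lambda)\to\SShTab(\lambda)$ is a weight-preserving bijection, and the identical argument applied to the unprimed restrictions yields the bijection $\DTab(\lambda)\to\ShTab(\lambda)$.

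There is no genuinely hard step here; the argument is a routine exhaustion, consistent with the character identities already recorded just above the corollary. The only point I would state explicitly, to avoid any ambiguity in passing to the union, is that the finite bijections are mutually compatible because they are all restrictions of one $n$-independent map $\PHM\circ\revrow$; this is immediate once one notes that neither $\revrow$ nor mixed insertion $\PHM$ depends on the rank $n$.
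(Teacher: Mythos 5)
Your argument is correct and is essentially the paper's: the corollary is obtained there by "taking the limit as $n\to\infty$" of the finite-rank isomorphisms of Proposition~\ref{normal-prop1}, which is exactly your exhaustion by the increasing unions $\DDTab(\lambda)=\bigcup_n\DDTab_n(\lambda)$, etc. Your explicit remark that the finite bijections are compatible because $\revrow$ and $\PHM$ are defined independently of $n$ is the one point the paper leaves implicit, and it is worth stating.
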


The facts above show that each connected component of a normal $\q_n$- or $\qq_n$-crystal
is respectively isomorphic to $\DDTab_n(\lambda)$ or $\DTab_n(\lambda)$ for some strict partition $\lambda$ with at most $n$ parts. As the Schur $P$- and $Q$-polynomials in $n$ variables indexed by such partitions are linearly independent over $\ZZ$ \cite[\S{III.8}]{Macdonald},
we recover the following results from \cite{GJKKK} (for $\q_n$) and \cite{MT2023} (for $\qq_n$).
 
\begin{corollary}\label{normal-cor2}
The character of a finite normal $\qq_n$-crystal (respectively, $\q_n$-crystal) is Schur $Q$-positive (respectively, Schur $P$-positive), and 
two such crystals with the same character are isomorphic.
\end{corollary}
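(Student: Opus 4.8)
The plan is to reduce the entire statement to the classification of connected normal crystals already in hand, using only the additivity of the character over disjoint unions and the linear independence of Schur $Q$-polynomials. A finite normal $\qq_n$-crystal $\cB$ is, by definition, a finite disjoint union of its connected components, each of which is again a connected normal $\qq_n$-crystal. First I would invoke Theorem~\ref{normal-thm}: every connected normal $\qq_n$-crystal is isomorphic to $\DDTab_n(\lambda)$ for a unique strict partition $\lambda$ with $\ell(\lambda)\le n$. Since $\ch$ is additive over disjoint unions and $\ch(\DDTab_n(\lambda)) = Q_\lambda(x_1,\dots,x_n)$, this gives
\[
\ch(\cB) = \sum_{\lambda} m_\lambda\, Q_\lambda(x_1,\dots,x_n),
\]
where $m_\lambda \in \NN$ counts the connected components of $\cB$ isomorphic to $\DDTab_n(\lambda)$. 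This already exhibits $\ch(\cB)$ as a nonnegative integer combination of Schur $Q$-polynomials, proving Schur $Q$-positivity; the $\q_n$-case is verbatim the same using the $\q_n$-analogue of Theorem~\ref{normal-thm} and $\ch(\DTab_n(\lambda)) = P_\lambda(x_1,\dots,x_n)$.

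For the isomorphism assertion, the key is to recover the multiplicities $m_\lambda$ from the character. Because the polynomials $Q_\lambda(x_1,\dots,x_n)$ (respectively $P_\lambda(x_1,\dots,x_n)$) indexed by strict partitions with at most $n$ parts are linearly independent over $\ZZ$, as noted just above, the coefficients $m_\lambda$ in the expansion of $\ch(\cB)$ are \emph{uniquely} determined. Hence if $\cB$ and $\cC$ are finite normal $\qq_n$-crystals with $\ch(\cB)=\ch(\cC)$, they have identical multiplicities $m_\lambda$, so one can pair the connected components of $\cB$ with those of $\cC$ so that matched pairs share the same highest weight $\lambda$. By Theorem~\ref{normal-thm} (or Corollary~\ref{atmost-cor}) each matched pair admits a crystal isomorphism, and assembling these over all components yields an isomorphism $\cB \xrightarrow{\sim} \cC$.

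The only genuine content beyond bookkeeping is the passage from ``same character'' to ``same multiset of highest weights,'' and this is exactly what the linear independence supplies: without it, distinct multisets of $\lambda$'s could conceivably collapse to the same symmetric polynomial. Everything else---additivity of $\ch$, the already-established identification of connected components with $\DDTab_n(\lambda)$ and $\DTab_n(\lambda)$, and the fact that a disjoint union of isomorphisms is an isomorphism---is routine. I would therefore expect the write-up to be short, with the cited linear independence doing the decisive work.
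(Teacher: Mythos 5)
Your proposal is correct and matches the paper's argument exactly: the paper likewise decomposes a finite normal crystal into connected components, identifies each with some $\DDTab_n(\lambda)$ (respectively $\DTab_n(\lambda)$) of character $Q_\lambda(x_1,\dots,x_n)$ (respectively $P_\lambda(x_1,\dots,x_n)$), and uses the linear independence of these polynomials over $\ZZ$ to recover the multiset of highest weights from the character. No substantive differences.
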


We conclude this section with a comment about highest and lowest weights.

 \begin{proposition}\label{ranked-prop}
Suppose $\cB$ is a normal $\gl_n$, $\q_n$, or $\qq_n$-crystal.
Let $[n-1] \subseteq I\subseteq \{\bar 1,0\}\sqcup[n-1]$ be the relevant indexing set for the crystal operators.
Then there is a unique function $\rank : \cB \to \NN$ such that 
(a) $\rank(f_i(b)) = \rank(b) + 1$ if $f_i(b) \neq 0$ for all $i \in I$ and $b \in \cB$, and
(b) $\rank(b) = 0$ for some $b$ in each connected component of $\cB$.
Relative to this map, an element $b \in \cB$ is highest weight if and only if $\rank(b) =0$,
and  lowest weight if and only if $\rank(b)$ is the maximum value attained 
by any element in the connected component of $b$.
\end{proposition}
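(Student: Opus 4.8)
The plan is to reduce everything to connected components and to a single integer-valued \emph{degree} statistic. Since $\rank$ is specified componentwise and the highest/lowest weight conditions are pointwise, it suffices to treat one connected normal crystal $\cC$; by the classification (Theorem~\ref{main-thm} with Theorem~\ref{normal-thm} and its $\q_n$- and $\gl_n$-analogues) such a $\cC$ is finite and isomorphic, via a unique map (Corollary~\ref{atmost-cor}), to a standard model $\DDTab_n(\lambda)$, $\DTab_n(\lambda)$, or the classical $\gl_n$-model of semistandard Young tableaux. A crystal isomorphism preserves $\weight$ and all crystal operators, hence also the derived maps $\sigma_i$, $e_{\bar i}$, $e_0^{[i]}$, so it carries highest/lowest weight elements to highest/lowest weight elements; and it will carry $\rank$ to $\rank$ once we know $\rank$ is characterized intrinsically by (a) and (b). Thus it is enough to construct $\rank$ abstractly and then verify the characterization on the standard models.

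For existence and uniqueness of $\rank$ I would introduce the statistic $\mathsf{d}$ on tensor powers of the standard crystal, defined additively from $\mathsf{d}(\boxed i)=i-1$ and $\mathsf{d}(\boxed{i'})=i$ (the primed values occur only in the $\qq_n$-case). The key lemma is that $\mathsf{d}$ increases by exactly $1$ along every crystal edge $b\xrightarrow{\,i\,}f_i(b)$, $i\in I$. On a single factor this is immediate from the description of the standard crystal; for tensor powers it follows by induction from the tensor product formulas, the only delicate check being the composite clauses of \eqref{eb1-eq} and \eqref{fb1-eq}. For instance, in the clause $f_{\overline1}(b\otimes c)=f_{\overline1}e_0(b)\otimes f_0(c)$ one has $\mathsf{d}(f_{\overline1}e_0(b))=\mathsf{d}(b)+1-1=\mathsf{d}(b)$ and $\mathsf{d}(f_0(c))=\mathsf{d}(c)+1$, a net change of $+1$, and the remaining clauses (including those for $e_{\overline1}$) cancel in the same way. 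Pulling $\mathsf{d}$ back along any embedding of $\cC$ into the appropriate tensor power gives a function on $\cC$ increasing by $1$ along each edge; two such pullbacks differ by a constant on the connected set $\cC$, so $\rank:=\mathsf{d}-\min_\cC\mathsf{d}$ is well defined, takes values in $\NN$, satisfies (a), and satisfies (b) because $\cC$ is finite. Uniqueness is formal: any two functions satisfying (a) differ by a constant on $\cC$, and (b) together with nonnegativity forces both minima to be $0$.

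With $\rank$ in hand, the characterization reduces to identifying the unique $\mathsf{d}$-minimizer and $\mathsf{d}$-maximizer of each standard model and matching them with the explicit elements of Theorem~\ref{highest-thm}. On $\DDTab_n(\lambda)$ the embedding $\revrow$ gives $\mathsf{d}(T)=\phi(T)+p(T)$, where $\phi(T)=\sum_i(i-1)\weight(T)_i$ and $p(T)$ counts primed boxes. Writing $\phi(T)=\sum_{k=1}^{n-1}\bigl(|\lambda|-\sum_{i\le k}\weight(T)_i\bigr)$ and using that the weights of a connected normal crystal lie in the convex hull of the permutations of its highest weight $\lambda$ (so $\sum_{i\le k}\weight(T)_i\le\sum_{i\le k}\lambda_i$ for all $k$), one sees $\phi$ is minimized exactly when $\weight(T)=\lambda$, and strictness of $\lambda$ makes this minimizing weight unique. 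Since $p(T)\ge 0$, a $\mathsf{d}$-minimizer has $p(T)=0$ and $\weight(T)=\lambda$, and there is a single such tableau because $[x^\lambda]\ch(\DTab_n(\lambda))=[x^\lambda]P_\lambda=1$; that tableau is $\Thighest_\lambda$. Dually $p(T)\le\ell(\lambda)$ with equality iff every row is primed, $\phi$ is uniquely maximized at the anti-dominant weight $\weight(\Tlowest_\lambda)$, and $[x^{\weight(\Tlowest_\lambda)}]P_\lambda=1$ by symmetry of $P_\lambda$, so the unique $\mathsf{d}$-maximizer is $\TTlowest_\lambda$. Because $\rank(T)=0$ (respectively $\rank(T)=\max_\cC\rank$) iff $T$ is the $\mathsf{d}$-minimizer (respectively maximizer), and these coincide with the unique highest and lowest weight elements by Theorem~\ref{highest-thm}, the stated equivalences follow on $\DDTab_n(\lambda)$, hence on every connected normal $\qq_n$-crystal; the $\q_n$- and $\gl_n$-cases are identical with $p\equiv0$, using $\Thighest_\lambda,\Tlowest_\lambda$ and the classical multiplicity $K_{\lambda\lambda}=1$.

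The main obstacle I anticipate is honest bookkeeping in the degree lemma for the extended operators $e_{\overline1},f_{\overline1}$ in the $\qq_n$-case, whose tensor rule \eqref{eb1-eq}--\eqref{fb1-eq} applies correlated compositions such as $f_{\overline1}e_0$ and $e_0e_{\overline1}$ to the two factors; one must verify clause by clause that the $+1$'s and $-1$'s cancel to leave a net change of exactly $1$, which is precisely what renders $\mathsf{d}$ insensitive to the apparent non-locality of these operators. A secondary subtlety is the multiplicity-one input: one must confirm that the extremal weights $\lambda$ and $\weight(\Tlowest_\lambda)$ occur with coefficient $1$ in $P_\lambda$ --- equivalently that $\Thighest_\lambda$ and $\Tlowest_\lambda$ are the only unprimed decomposition tableaux of those weights --- which combines the character identity behind Proposition~\ref{normal-prop1} with the strictness of $\lambda$ and the dominance/rearrangement bounds above.
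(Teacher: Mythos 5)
Your construction of $\rank$ is, up to an additive constant on each component, identical to the paper's: your statistic $\mathsf{d}(b)=\sum_i(i-1)\weight(b)_i+\primes(b)$ agrees with the paper's $\height(\weight(b_0)-\weight(b))+\primes(b)$ after normalization, and both proofs get uniqueness formally from connectivity plus nonnegativity. Where you genuinely diverge is in the characterization of highest and lowest weight elements. The paper observes that this same function is a rank function for the \emph{extended} crystal graph as well --- each $f_{\bar i}$ ($i\in[n-1]$) subtracts $\e_i-\e_{i+1}$ from the weight and each $f_0^{[i]}$ adds one primed box, since the $\sigma_j$'s permute weights and preserve primes --- so the minimum of $\rank$ can only be attained at a source of the extended graph and the maximum only at a sink; uniqueness of the highest/lowest weight element then finishes the argument with no further combinatorics. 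You instead identify the extremizers of $\mathsf{d}$ on the model $\DDTab_n(\lambda)$ directly, via the partial-sum dominance bound on weights and the multiplicity-one facts $[x^\lambda]P_\lambda=[x^{w_0\lambda}]P_\lambda=1$, and then match them with $\Thighest_\lambda$ and $\TTlowest_\lambda$ using Theorem~\ref{highest-thm}. Both routes work. The paper's buys the characterization essentially for free once one notes the compatibility of $\rank$ with the derived operators, and needs no information about which weights occur; yours avoids touching the operators $f_{\bar i}$ and $f_0^{[i]}$ for $i\geq 2$ altogether, at the price of importing the convexity/dominance statement for weights of normal $\q_n$- and $\qq_n$-crystals (which in these types rests on the Schur-positivity of $P_\lambda$ with support dominated by $\lambda$, a fact the paper never establishes) together with the two multiplicity-one computations. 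Your bookkeeping for the clauses of \eqref{eb1-eq}--\eqref{fb1-eq} is correct as sketched (and is even more transparent if done via the signature rule of Proposition~\ref{sign-ex}, where every operator visibly changes a single letter $i^\bullet\mapsto(i+1)^\bullet$, toggles a prime on a $1$, or exchanges $1^\circ 1^\bullet\leftrightarrow 2^\bullet 1^\circ$, each changing $\mathsf{d}$ by exactly $\pm1$).
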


This result means we can read off the highest weight elements of normal crystals just from
the usual crystal graph, without drawing any extra arrows.

\begin{proof}
Any rank function on $\cB$ satisfying (a) is unique up to translation by a constant on each connected component.
So it suffices to produce one such function that is also a rank function for the extended crystal graph in types $\q_n$ and $\qq_n$
(which includes extra arrows for the operators $f_{\bar i}$ and $f_0^{[i]}$).
For this rank function to take nonnegative values and also attain the value zero on each connected component,
it must have $\rank(b) =0$ if and only if $b$ is highest weight,
and it must attain its local maximum when $b$ is lowest weight.

If $v \in \ZZ^n$ has $v_1+v_2+\dots+v_n =0$ then we can uniquely decompose $v = \sum_{i=1}^{n-1} c_i (\e_i -\e_{i+1})$
for $c_i \in \ZZ$; in this case define $\height(v) := \sum_{i=1}^{n-1} c_i$.
Suppose $b \in \cB$ belongs to a connected component with unique highest weight element $b_0$.
In types $\gl_n$ and $\q_n$ the desired rank function
is given by $\rank(b) = \height(\weight(b_0) - \weight(b))$, since every crystal operator $f_i$ or $f_{\bar i}$ for $i \in[n-1]$
subtracts $\e_i - \e_{i+1}$ from the weight.

In type $\qq_n$ we may assume that $\cB$ is a disjoint union of crystals of the form $\DDTab_n(\lambda)$.
If $\primes(b)$ is the number of primed boxes in $b$ then
 the desired rank function is $\rank(b) := \height(\weight(b_0) - \weight(b)) +\primes(b)$, 
 since each $f_0^{[i]}$ increases the number of primed boxes by one.
\end{proof}

\subsection{Shifted plactic relations for primed words}\label{comp-sect}

In this section we study the relation $\simdec$ on primed words with the property that $v\simdec w$ if and only if $\Pqq(v) =\Pqq(w)$. This will generalize the notion of \defn{shifted plactic equivalence} in \cite{CNO,Serrano}.

We use the term \defn{congruence} to mean an equivalence relation $\sim$ on primed words that has 
$ab \sim uv$ whenever $a,b,u,v$ are words with $a\sim u$ and $b\sim v$.

\begin{definition}\label{simdec}
Let $\simdec$ be the smallest congruence that satisfies
\begin{align}
  a^\bullet\hs   b &\ \simdec\    a^\bullet\hs    b' & \quad \text{if} \quad a\leq b, \label{simdec-1} \\
  b\hs a^\bullet &\ \simdec\    b'\hs a^\bullet & \quad \text{if} \quad a< b, \label{simdec-2} \\
a^\bullet\hs    b^{\phantom{\bullet}}\hs d^{\phantom{\bullet}}\hs c^\circ &\ \simdec\  a^\bullet \hs   d^{\phantom{\bullet}}\hs b^\circ\hs    c & \quad \text{if} \quad a\leq b \leq c < d, \label{simdec-3}\\
a^\bullet\hs   c^{\phantom{\bullet}}\hs d^{\phantom{\bullet}}\hs b^\circ &\ \simdec\  a^\bullet\hs    c^{\phantom{\bullet}}\hs b^\circ\hs   d & \quad \text{if} \quad a\leq b < c \leq d, \label{simdec-4} \\
d^{\phantom{\bullet}}\hs a^\bullet\hs   c^{\phantom{\bullet}}\hs b^\circ &\ \simdec\  a^\bullet\hs   d^{\phantom{\bullet}}\hs c^{\phantom{\bullet}}\hs b^\circ  & \quad \text{if} \quad a\leq b < c < d, \label{simdec-5}\\
b^{\phantom{\bullet}}\hs a^\bullet\hs    d^{\phantom{\bullet}}\hs c^\circ &\ \simdec\  b^\circ\hs    d^{\phantom{\bullet}}\hs a^\bullet\hs    c & \quad \text{if} \quad a< b \leq c < d, \label{simdec-6}\\
c^{\phantom{\bullet}}\hs b^\bullet \hs   d^{\phantom{\bullet}}\hs a^\circ &\ \simdec\  c^\bullet \hs   d^{\phantom{\bullet}}\hs b^{\phantom{\bullet}}\hs a^\circ  & \quad \text{if} \quad a< b < c \leq d, \label{simdec-7}\\
d^{\phantom{\bullet}}\hs b^\bullet\hs   c^{\phantom{\bullet}}\hs a^\circ &\ \simdec\  b^\bullet\hs    d^{\phantom{\bullet}}\hs c^{\phantom{\bullet}}\hs a^\circ   & \quad \text{if} \quad a< b \leq c < d, \label{simdec-8}\\
b^\bullet \hs   c^{\phantom{\bullet}}\hs d^{\phantom{\bullet}}\hs a^\circ &\ \simdec\  b^\bullet\hs   c^{\phantom{\bullet}}\hs a^\circ\hs    d & \quad \text{if} \quad a< b \leq c \leq d, \label{simdec-9}\\
c^{\phantom{\bullet}}\hs a^\bullet \hs   d^{\phantom{\bullet}}\hs b^\circ &\ \simdec\  c^\circ\hs   d^{\phantom{\bullet}}\hs a^\bullet \hs   b & \quad \text{if} \quad a\leq b < c \leq d, \label{simdec-10} 
\end{align}
for all unprimed integers $a,b,c,d \in \PP$
and all choices of $a^\bullet,a^\circ \in \{a',a\}$, 
$b^\bullet,b^\circ \in \{b',b\}$, 
and
$c^\bullet,c^\circ \in \{c',c\}$
with
$a-a^\bullet =b-b^\bullet=c-c^\bullet$ and
$a-a^\circ =b-b^\circ=c-c^\circ$.
\end{definition}

One could rewrite this definition without using the annotated symbols $a^\bullet$, $a^\circ$, $b^\bullet$, $b^\circ$, $c^\bullet$, $c^\circ$ but this would require 36 relations instead of 10.
For example \eqref{simdec-1} could be 
rewritten as the pair of relations $ab\simdec ab'$ and $a'b\simdec ab'$ for unprimed numbers $a\leq b$ while \eqref{simdec-3} could be rewritten as the four relations 
\[ abdc \simdec adbc,\quad
a'bdc \simdec a'dbc,\quad abdc' \simdec adb'c,\quand 
a'bdc'\simdec a'db'c
\]
for unprimed numbers $a\leq b \leq c <d$.

\begin{example}\label{simdec-ex0} We have
\[
\ba
16431'224 & \simdec 61431'224 &&\quad\text{since \eqref{simdec-5} gives $1643\simdec 6143$}
\\& \simdec 64131'224 &&\quad\text{since \eqref{simdec-5} gives $1431'\simdec 4131'$}
\\& \simdec 641132'24 &&\quad\text{since \eqref{simdec-3} gives $131'2\simdec 1132'$}
\\& \simdec 6411232'4 &&\quad\text{since \eqref{simdec-3} gives $132'2\simdec 1232'$}
\\& \simdec 64112342' &&\quad\text{since \eqref{simdec-4} gives $232'4\simdec 2342'$}.
\ea
\]
\end{example}

Clearly if two primed words have $v\simdec w$ then $\unprime(v) \simdec \unprime(w)$. 
It follows that $\simdec$ restricts on unprimed words to the \defn{shifted plactic equivalence relation} specified in \cite[Def.~1.6]{Serrano}.

We first state two lemmas relevant to the proof of Proposition~\ref{simdec-prop}.
 In these statements, we annotate certain letters using the symbols $\bullet$ and $\circ$ following the same conventions as in Definition~\ref{simdec}.
 
\begin{lemma}\label{simdec-incr}
Let $y_0 < y_1 < y_2 \dots < y_N$ be an increasing sequence in $\PP$ for some $N \geq 2$, and fix $x \in \PP$. If $y_{i-1}<x \leq y_{i}$ for some $i \geq 2$, then
\begin{align*}
x^\bullet (y_Ny_{N-1}\dots y_1y_0^\circ) \simdec y_Ny_{N-1}\dots y_{i+1} x y_{i-1}\dots  y_{1}^\bullet y_{i}y_0^\circ .
\end{align*}
If instead $x \leq y_1$ then
$
x^\bullet (y_Ny_{N-1}\dots y_1y_0^\circ) \simdec y_Ny_{N-1}\dots y_{2} x^\bullet y_1y_0^\circ.
$
\end{lemma}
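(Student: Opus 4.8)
The plan is to prove this by induction on $N$, viewing the statement as the word-level shadow of the bump that occurs when $x$ is row-inserted into the increasing run $y_1<y_2<\cdots<y_N$ (cf. the insertion in Definition~\ref{decomp-insert-algo}). Since $\simdec$ is a congruence, it suffices to transport $x$ rightward through the reversed run $y_Ny_{N-1}\cdots y_1$ by applying the defining relations \eqref{simdec-1}--\eqref{simdec-10} to short factors, verifying at each stage that the prime annotations $\bullet,\circ$ migrate exactly as claimed. The trailing letter $y_0^\circ$ is never moved and serves only as a right-hand buffer guaranteeing that the length-four relations always have enough letters to act on.

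For the first (generic) statement I would separate two regimes. First, \emph{passing the large letters}: while $x<y_N$ (equivalently $i<N$, forcing $N\ge 3$) I move $x^\bullet$ one step right past $y_N$ using a single four-letter relation on the window $x^\bullet y_Ny_{N-1}y_{N-2}$; this is \eqref{simdec-5} when $x\le y_{N-2}$ (i.e. $i\le N-2$) and \eqref{simdec-8} when $y_{N-2}<x\le y_{N-1}$ (i.e. $i=N-1$), and in both cases the $\bullet$ stays on $x$ and $y_{N-2}$ is unchanged. This leaves $y_N$ at the front of an instance of the lemma with $N$ replaced by $N-1$ and the same $i$, so the inductive hypothesis finishes this regime. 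Second, \emph{the bump}: the base case $i=N$ is the word $x^\bullet y_iy_{i-1}\cdots y_1y_0^\circ$, which I transform into $x\,y_{i-1}\cdots y_1^\bullet\,y_iy_0^\circ$ by an inner induction applying \eqref{simdec-7} to a sliding window carrying $y_i$: the first application (on $x^\bullet y_iy_{i-1}y_{i-2}$) moves the $\bullet$ from $x$ onto $y_{i-1}$ and shifts $y_i$ one box right, and each subsequent application pushes $y_i$ further right while walking the $\bullet$ down through $y_{i-2},\dots,y_1$, until $y_i$ lands just before $y_0^\circ$. One checks that the hypothesis $a<b<c\le d$ of \eqref{simdec-7} holds at every step because the window always reads (current $\bullet$-carrier)$\,\cdot\,y_i\,\cdot\,$(two consecutive smaller letters).

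The second statement ($x\le y_1$) is the same passing-the-large-letters induction with no bump: here $x\le y_1\le y_{N-2}$ when $N\ge 3$, so \eqref{simdec-5} moves $x^\bullet$ past $y_N$ at every step (keeping the $\bullet$ on $x$), and the base case $N=2$ is a single application of \eqref{simdec-5} if $x\le y_0$ and of \eqref{simdec-8} if $y_0<x\le y_1$, landing $x^\bullet$ between $y_2$ and $y_1$.

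The main obstacle I anticipate is not the reordering itself but the prime bookkeeping together with the boundary case analysis: each relation \eqref{simdec-1}--\eqref{simdec-10} shifts the $\bullet/\circ$ statuses among its letters, so I must select, at each window, the unique relation whose two sides carry the primes in the positions dictated by the inequalities $y_{i-1}<x\le y_i$ and by whether the fourth letter of the window is a run letter $y_{N-2}$ or the buffer $y_0^\circ$. Verifying that after all the moves the $\bullet$ has landed on $x$ (second statement) or on $y_1$ (first statement) while $\circ$ never leaves $y_0$ is the delicate part; I would organize this by tracking the single ``active'' primed letter through the induction, so that the annotation constraints $a-a^\bullet=b-b^\bullet=c-c^\bullet$ of Definition~\ref{simdec} are automatically met at each application.
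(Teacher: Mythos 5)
Your proposal is correct and follows essentially the same route as the paper's proof: induction on $N$, commuting $x^\bullet$ past $y_N$ via \eqref{simdec-5} (when $i\le N-2$) or \eqref{simdec-8} (when $i=N-1$), and handling the terminal case $i=N$ by iterating \eqref{simdec-7} so that the $\bullet$ walks down through $y_{N-1},\dots,y_1$ while $y_N$ slides right toward $y_0^\circ$. The prime bookkeeping you flag as the delicate point works out exactly as you describe, matching the paper's case analysis.
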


\begin{proof}
If $N =2$ and $y_1< x \leq y_{2}$, then 
\[ x^\bullet y_2 y_1y_0^\circ \simdec x y_{1}^\bullet y_2 y_0^\circ \quad \text{ by } \eqref{simdec-7},
\]
while if $x \leq y_{1}$ then
\[ x^\bullet y_2 y_1y_0^\circ \simdec  y_2 x^\bullet y_1y_0^\circ \quad   \text{ by } \eqref{simdec-5} \text{ or } \eqref{simdec-8}.
\]
Suppose $N \geq 3$. If $y_{N-1}< x \leq y_{N}$, then 
\begin{align*}
x^\bullet (y_Ny_{N-1}y_{N-2} \dots y_0^\circ) & \simdec x(y_{N-1}^\bullet y_Ny_{N-2} y_{N-3}) \dots y_0^\circ \quad  \text{ by } \eqref{simdec-7} \\
& \simdec xy_{N-1}y_{N-2}^\bullet y_N y_{N-3} \dots y_0^\circ \quad  \text{ by } \eqref{simdec-7}\\
& \simdec \dots\\
& \simdec  xy_{N-1}\dots y_2 y_{1}^\bullet y_Ny_0^\circ \quad  \text{ by } \eqref{simdec-7}.
\end{align*}
Finally if $y_{i-1}<x \leq y_{i}$ for some $N>i \geq 2$, then
\begin{align*}
x^\bullet (y_Ny_{N-1}y_{N-2} \dots y_0^\circ) & \simdec y_Nx^\bullet  y_{N-1}y_{N-2} \dots y_0^\circ \text{ by } \eqref{simdec-5} \text{ or } \eqref{simdec-8},
\end{align*}
and we obtain the desired result by induction on $N$.
\end{proof}

\begin{lemma}\label{simdec-decr}
Let $w_1 \geq w_2 \geq \dots \geq w_m$ be a weakly decreasing sequence in $\PP$ for some $m \geq 1$ and suppose $y \in \PP$ has $w_m < y$. Fix $u \in \PP$ with $u \leq y$. Then: 
\ben
\item[(i)] If $u > w_m$ and $w_1< y$, then
\begin{align*}
u^\bullet y w_m^\circ w_{m-1} \dots w_1 \simdec u w_m^\circ w_{m-1} \dots w_2 y  w_1^\bullet.
\end{align*}
\item[(ii)] If $u \leq w_m$ and $w_1< y$, then
\begin{align*}
u^\bullet y w_m^\circ w_{m-1} \dots w_1 \simdec u^\bullet w_m w_{m-1} \dots w_2 y  w_1^\circ.
\end{align*}
\item[(iii)] If $u > w_m$ and $w_j < y \leq w_{j-1}$ for some $2 \leq j < m$, then
\begin{align*}
u^\bullet y w_m^\circ w_{m-1} \dots w_1 \simdec u w_m^\circ w_{m-1} \dots w_{j+1}y w_{j-1} \dots w_1w_j^\bullet.
\end{align*}
\item[(iv)] If $u \leq w_m$ and $w_j < y \leq w_{j-1}$ for some $2 \leq j < m$, then
\begin{align*}
u^\bullet y w_m^\circ w_{m-1} \dots w_1 \simdec u^\bullet w_m w_{m-1} \dots w_{j+1}y w_{j-1} \dots w_1w_j^\circ.
\end{align*}
\item[(v)] If $w_m< y \leq w_{m-1}$, then
\begin{align*}
u^\bullet y w_m^\circ w_{m-1} \dots w_1 \simdec u^\bullet y w_{m-1} w_{m-2} \dots w_1w_m^\circ.
\end{align*}
\een
\end{lemma}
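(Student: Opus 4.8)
The plan is to prove all five parts simultaneously by induction on the length $m$ of the weakly decreasing run, mirroring the argument for Lemma~\ref{simdec-incr}. The picture guiding the bookkeeping is that in parts (i)--(iv) the letter $y$ travels to the right, passing exactly those run entries strictly smaller than it, whereas in part (v) the letter $y$ stays put and the smallest entry $w_m$ travels to the right end. Throughout, the essential move is a single application of one of the four-letter relations \eqref{simdec-3}--\eqref{simdec-10} to a length-four window, and the whole difficulty is to choose the correct relation and to check that the annotations $\bullet$ and $\circ$ migrate to exactly the positions demanded by the next instance of the lemma.

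For the base cases I would verify each identity directly. When $m=2$, part (v) is \eqref{simdec-9} read right to left if $u>w_2$ and \eqref{simdec-4} read right to left if $u\le w_2$; part (i) is \eqref{simdec-10} if $u>w_1$ and \eqref{simdec-6} if $w_2<u\le w_1$; and part (ii) is \eqref{simdec-3}. Parts (iii) and (iv) are vacuous for $m=2$, their smallest instance being $m=3$. In each case one matches the displayed word to one side of the chosen relation, with the caveat that the relation's internal symbols $\bullet,\circ$ may correspond to the lemma's $\circ,\bullet$; since only single letters carry each symbol, the side conditions $a-a^\bullet=\cdots$ are vacuous and any prime choice is permitted.

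For the inductive step of parts (i)--(iv) I would move $y$ one position to the right, past the smallest entry $w_m$, by applying a single relation to the prefix $u^\bullet\,y\,w_m^\circ\,w_{m-1}$: this is \eqref{simdec-10} (when $u>w_{m-1}$) or \eqref{simdec-6} (when $w_m<u\le w_{m-1}$) in the case $u>w_m$, and \eqref{simdec-3} in the case $u\le w_m$. A direct check, which I have carried out in the representative cases (i) and (ii), shows that in every case the resulting word has the form $u^{\ast}\,w_m^{\ast}\,y\,w_{m-1}\cdots w_1$ in which the symbol $\circ$ has moved onto $w_{m-1}$ exactly so that the suffix $w_m\,y\,w_{m-1}\cdots w_1$ is a length-$(m-1)$ instance of the lemma for the run $w_{m-1}\ge\cdots\ge w_1$ with inserted letter $w_m$. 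Since $w_m$ is at most the new smallest entry, this sub-instance always falls under part (ii), (iv), or (v): it is part (ii) when $w_1<y$, part (iv) while $y$ still exceeds the second-smallest remaining entry, and part (v) at the step where it no longer does. Applying the induction hypothesis (and leaving the untouched leading letter fixed, as the congruence property allows) then yields exactly the asserted right-hand side. Part (v) itself is handled by its own induction on $m$: peel the largest entry $w_1$ off the right, apply the part-(v) hypothesis to $u^\bullet\,y\,w_m^\circ\,w_{m-1}\cdots w_2$, and finish with one more instance of \eqref{simdec-9}/\eqref{simdec-4} carrying $w_m^\circ$ past $w_1$ to the end.

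The main obstacle is purely the bookkeeping, and it is threefold: tracking the two prime annotations through each elementary move, including recognising when a relation's $\bullet,\circ$ play the lemma's $\circ,\bullet$; handling the equalities among the $w_i$ and between $u$ and $w_m$, which force one to switch between the strict relations \eqref{simdec-9},\eqref{simdec-6} and their non-strict counterparts \eqref{simdec-4},\eqref{simdec-3}; and organising the simultaneous induction so that each part's inductive step legitimately invokes the hypothesis for the (possibly different) part governing the shorter run. Once the correspondence of primes is pinned down at the first move, each subsequent step is a mechanical verification that the side conditions $a\le b\le c<d$, and so on, continue to hold, which they do because the value inequalities $w_m\le w_{m-1}\le\cdots\le w_1$ and $u\le y$ are preserved on passing to the shorter run.
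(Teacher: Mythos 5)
Your proposal is correct and is essentially the paper's own argument: the same ten relations \eqref{simdec-3}--\eqref{simdec-10} are applied to the same sliding four-letter windows, with the same case split on how $u$ compares with $w_m$ and on where $y$ comes to rest, and the prime bookkeeping works out exactly as you describe. The only difference is organizational --- you package the rightward transport of $y$ as a simultaneous induction on $m$ whose (iii)/(iv) cases bottom out at part (v), whereas the paper writes the chains explicitly, derives (iii)/(iv) by invoking (i)/(ii) on a prefix and then pushing $w_j^\bullet$ to the right end via \eqref{simdec-4} and repeated \eqref{simdec-9}, and proves (v) by a left-to-right sweep rather than by peeling $w_1$ off the right.
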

\begin{proof} We consider each part in turn. 
\ben
\item[(i)] In this case $y \geq u > w_m$ and $w_m \leq w_{m-1} \leq w_1 < y$. Therefore
{\footnotesize\begin{align*}
(u^\bullet y w_m^\circ w_{m-1}) \dots w_1 &\simdec u  (w_m^\circ y w_{m-1}^\bullet w _{m-2}) \dots w_1 \text{ by } \eqref{simdec-6} \text{ or } \eqref{simdec-10} \\
& \simdec u  w_m^\circ  (w_{m-1} y w _{m-2}^\bullet w_{m-3}) \dots w_1 \text{ by } \eqref{simdec-3}\\
& \simdec u  w_m^\circ  w_{m-1} ( w _{m-2} y w_{m-3}^\bullet w_{m-4} )\dots w_1 \text{ by } \eqref{simdec-3}\\
& \simdec \dots\\
& \simdec u w_m^\circ w_{m-1} \dots w_3 w_2 y w_1^\bullet \text{ by } \eqref{simdec-3}.
\end{align*}}
\item[(ii)] This case is similar to (i), but now  $u \leq w_m \leq w_{m-1} \leq w_1 < y$. Therefore
{\footnotesize\begin{align*}
(u^\bullet y w_m^\circ w_{m-1}) \dots w_1 &\simdec u^\bullet  (w_m y w_{m-1}^\circ w _{m-2}) \dots w_1 \text{ by } \eqref{simdec-3} \\
& \simdec u^\bullet  w_m (w_{m-1} y w _{m-2}^\circ w_{m-3}) \dots w_1 \text{ by } \eqref{simdec-3}\\
& \simdec u^\bullet  w_m w_{m-1}  (w _{m-2} y w_{m-3}^\circ w_{m-4}) \dots w_1 \text{ by } \eqref{simdec-3}\\
& \simdec \dots\\
& \simdec u^\bullet w_m w_{m-1} \dots w_3 w_2 y w_1^\circ \text{ by } \eqref{simdec-3}.
\end{align*}}

\item[(iii)] Similar to (i), in this case $y \geq u > w_m$ and $w_m \leq w_{m-1} \leq w_j < y$, so
{\footnotesize\begin{align*}
u^\bullet y w_m^\circ w_{m-1} \dots w_1 &\simdec uw_m^\circ w_{m-1} \dots w_{j+1} y w_j^\bullet w_{j-1} \dots w_1 \text{ by (i) }\\
&\simdec u w_m^\circ w_{m-1} \dots w_{j+1} (y w_{j-1} w_j^\bullet w_{j-2})  \dots w_1 \text{ by }  \eqref{simdec-4} \\
&\simdec u w_m^\circ w_{m-1} \dots w_{j+1} y (w_{j-1} w_{j-2} w_j^\bullet w_{j-3})  \dots w_1 \text{ by }  \eqref{simdec-9} \\
&\simdec u w_m^\circ w_{m-1} \dots w_{j+1} y w_{j-1} ( w_{j-2}  w_{j-3}w_j^\bullet w_{j-4})  \dots w_1 \text{ by }  \eqref{simdec-9} \\
& \simdec \dots\\
& \simdec u w_m^\circ w_{m-1} \dots w_{j+1}y w_{j-1} \dots w_3 w_2w_1w_j^\bullet \text{ by }  \eqref{simdec-9}.
\end{align*}}
\item[(iv)] This case is similar to (iii), but now $u \leq w_m \leq w_{m-1} \leq w_j < y$. Therefore
{\footnotesize\begin{align*}
u^\bullet y w_m^\circ w_{m-1} \dots w_1 &\simdec u^\bullet w_m^\circ w_{m-1} \dots w_{j+1} y w_j^\circ w_{j-1} \dots w_1 \text{ by (ii) }\\
&\simdec u^\bullet w_m w_{m-1} \dots w_{j+1} (y w_{j-1} w_j^\circ w_{j-2})  \dots w_1 \text{ by }  \eqref{simdec-4} \\
&\simdec u^\bullet w_m w_{m-1} \dots w_{j+1} y (w_{j-1} w_{j-2} w_j^\circ w_{j-3})  \dots w_1 \text{ by }  \eqref{simdec-9} \\
&\simdec u^\bullet w_m w_{m-1} \dots w_{j+1} y w_{j-1} ( w_{j-2}  w_{j-3}w_j^\circ w_{j-4})  \dots w_1 \text{ by }  \eqref{simdec-9} \\
& \simdec \dots\\
& \simdec u^\bullet w_m w_{m-1} \dots w_{j+1}y w_{j-1} \dots w_3 w_2w_1w_j^\circ \text{ by }  \eqref{simdec-9}.
\end{align*}}
\item[(v)] In this case, depending on whether $u \leq w_m$ or $u > w_m$, we have
{\footnotesize\begin{align*}
(u^\bullet y w_m^\circ w_{m-1}) w_{m-2} \dots w_1 &\simdec u^\bullet (y w_{m-1} w_m^\circ w_{m-2})  \dots w_1 \text{ by } \eqref{simdec-4} \text{ or } \eqref{simdec-9}\\
& \simdec u^\bullet y (w_{m-1}  w_{m-2}w_m^\circ w_{m-3})  \dots w_1 \text{ by } \eqref{simdec-9} \\
& \simdec \dots\\
& \simdec u^\bullet w_{m-1} w_{m-2} \dots w_3 w_2w_1w_m^\circ \text{ by }  \eqref{simdec-9}.
\end{align*}}
\een
\end{proof}

The following example illustrate how Lemmas~\ref{simdec-incr} and \ref{simdec-decr} can be used to show that $\revrow(x^{\bullet} \toQQ T) \simdec x^{\bullet} \revrow(T)$ when $x \in \PP$ and $T$ is a one-row primed decomposition tableau. The above congruence relation is the key step in the proof of Proposition~\ref{simdec-prop} below.

\begin{example}
Suppose $x = 1$ and $T =  \ytab{ 4 & 2 & 2& 1' & 3 & 4 & 6} $. Then 
\[x \toQQ T = \ytab{ \none & 2' \\
4 & 3 & 2& 1 & 1 & 4 & 6}  
\] and we have
\begin{align*}
 x \hs\revrow(T) &= 16431'224\\
& \simdec 64131'224 &&\text{by Lemma~\ref{simdec-incr} (or Example~\ref{simdec-ex0})}\\
& \simdec 64112342' &&\text{by Lemma~\ref{simdec-decr}}\text{(iv)}\\
& = \revrow(x \toQQ T).
\end{align*}
Similarly, if $y = 4'$ then \[y \toQQ T = \ytab{ \none & 2' \\
4 & 4 & 2& 1' & 3 & 4 & 6}\] 
and we have
\begin{align*}
 y\hs \revrow(T) &= 4'6431'224\\
& \simdec 643'41'224 &&\text{by Lemma~\ref{simdec-incr}}\\
& \simdec 6431'2442' &&\text{by Lemma~\ref{simdec-decr}(iii)}\\
& = \revrow(y \toQQ T).
\end{align*}
\end{example}

The proposition and theorem below are the main results of this section.

\begin{proposition}\label{simdec-prop}
If $w$ is a primed word then $w \simdec \revrow(\Pqq(w))$.
\end{proposition}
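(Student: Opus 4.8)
The plan is to argue by induction on the length of $w$, reducing everything to a statement about inserting a single letter into a single row. Write $w = w_1 v$ with $v = w_2\cdots w_m$. By the inductive hypothesis $v \simdec \revrow(\Pqq(v))$, and since $\simdec$ is a congruence we obtain $w = w_1 v \simdec w_1\,\revrow(\Pqq(v))$. As $\Pqq(w) = w_1 \toQQ \Pqq(v)$, it therefore suffices to establish the \emph{key congruence}
\[ x\,\revrow(P) \simdec \revrow(x \toQQ P) \]
for every primed number $x$ and every primed decomposition tableau $P$. This is precisely the congruence highlighted just before the worked example above.

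I would prove the key congruence by induction on the number of rows of $P$. When $P$ is empty, $x \toQQ P$ is a single box and $\revrow(x \toQQ P) = x = x\,\revrow(P)$. For the inductive step, let $\rho$ denote the bottom row of $P$ and let $P'$ be the subtableau formed by the remaining rows, so that $\revrow(P) = \rho^{\mathrm{rev}}\,\revrow(P')$, where $\rho^{\mathrm{rev}}$ means $\rho$ read from right to left. The insertion $x \toQQ P$ replaces $\rho$ by a new bottom row $\rho'$ and, unless it halts, bumps a letter $c$ (carrying its prescribed prime) that is then inserted into $P'$; thus $\revrow(x \toQQ P) = (\rho')^{\mathrm{rev}}\,\revrow(c \toQQ P')$ in the bumping case and $(\rho')^{\mathrm{rev}}\,\revrow(P')$ in the halting case. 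Granting the \emph{single-row statement} $x\,\rho^{\mathrm{rev}} \simdec (\rho')^{\mathrm{rev}}\,c$ (with the trailing $c$ omitted when the insertion halts), the step closes by right-multiplying by $\revrow(P')$ and then, in the bumping case, invoking the inductive hypothesis applied to $P'$ with inserted letter $c$, which has strictly fewer rows: $x\,\rho^{\mathrm{rev}}\,\revrow(P') \simdec (\rho')^{\mathrm{rev}}\,c\,\revrow(P') \simdec (\rho')^{\mathrm{rev}}\,\revrow(c \toQQ P') = \revrow(x \toQQ P)$.

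It remains to establish the single-row statement, which is where Lemmas~\ref{simdec-incr} and \ref{simdec-decr} enter and which is the main obstacle. Reading the hook word $\rho$ from right to left turns it into a strictly decreasing run (the reversed increasing part) ending at the middle element, followed by a weakly increasing run (the reversed decreasing part); the middle element carries the only possible prime of the row, recorded as the $\circ$-annotated letter $y_0^\circ$ in Lemma~\ref{simdec-incr}. I would first apply Lemma~\ref{simdec-incr} to commute the inserted letter $x^\bullet$ rightward past the reversed increasing part, which mirrors the step in Definition~\ref{decomp-insert-algo} where $\lceil x\rceil$ is either appended to the increasing part or bumps its leftmost entry $b \geq \lceil x\rceil$; I would then apply Lemma~\ref{simdec-decr} to carry the bumped value through the reversed decreasing part to the location of the letter $c$ it displaces. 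The example immediately preceding this proposition is exactly this two-step computation.

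The delicate point, and the true source of difficulty, is the prime bookkeeping. The insertion rule strips the prime off the old middle element before inserting and may reattach a prime according to cases (1) and (2) of Definition~\ref{decomp-insert-algo}, which are governed by whether the middle position moves. I would match each of these cases to the corresponding clause of Lemmas~\ref{simdec-incr} and \ref{simdec-decr} and verify that the $\bullet$- and $\circ$-annotations propagate so that the prime of the bumped letter $c$ and the prime of the new middle element of $\rho'$ come out exactly as prescribed, and so that the right-hand side is literally $\revrow$ of the new row together with the trailing $c$. The several boundary configurations---an empty or one-entry increasing part, $x$ landing at the very end (the halting case, with no $c$), and $x \leq y_1$ versus $x > y_1$---correspond to the separate clauses of the two lemmas and must be checked individually, but each reduces to a direct application. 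Once the single-row statement is verified with the correct primes, the two inductions above complete the proof.
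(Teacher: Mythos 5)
Your proposal is correct and follows essentially the same route as the paper: induction on word length reduces to the key congruence $x^\bullet\,\revrow(T) \simdec \revrow(x^\bullet \toQQ T)$, induction on the number of rows reduces that to the one-row case, and the one-row case is handled by pushing $x^\bullet$ through the reversed increasing part via Lemma~\ref{simdec-incr} and then the bumped value through the reversed decreasing part via Lemma~\ref{simdec-decr}, with the same case split (halting, $x\leq w_m$, and $w_{i-1}<x\leq w_i$, each with subcases for where the bump lands) and the same prime bookkeeping you flag as the delicate point. The only difference is that the paper writes out the six subcases explicitly, which your sketch defers to ``direct application'' of the lemmas.
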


\begin{proof}
By induction on the length of $w$, it suffices to show that 
\[\revrow(x^{\bullet} \toQQ T) \simdec x^{\bullet} \revrow(T) \] when $x \in \PP$ and $T$ is a primed decomposition tableau. By induction on the number of rows of $T$, it is enough to consider the case where $T$ has just one row. We will demonstrate the desired result in this case using Lemmas~\ref{simdec-incr} and \ref{simdec-decr}.

Assume $T$ is a one-row primed decomposition tableau, so that we can write
 \[T = \ytabd{ w_1 & w_2 & \dots & w_m^\circ & w_{m+1} & \dots & w_n}\] where $w_1w_2 \dots w_m \dots w_n$ is an unprimed hook word, and $w_m^\circ \in \{w_m', w_m\}$. Then $x^{\bullet} \revrow(T) = x^\bullet w_n w_{n-1} \dots w_m^\circ \dots w_2w_1$, and we have to show that this word is equivalent under $\simdec$ to $\revrow(x^{\bullet} \toQQ T)$.

Suppose $x > w_n$. Then 
\[x^{\bullet} \toQQ T = \ytabd{ w_1 & w_2 & \dots & w_m^\circ & w_{m+1} & \dots & w_n & x}\] by Definition~\ref{decomp-insert-algo} (notice the position of middle element does not change),  so
\[\ba \revrow(x^{\bullet} \toQQ T) &= xw_nw_{n-1} \dots w_m^\circ \dots w_2w_1 \\&\simdec x^\bullet w_n w_{n-1} \dots w_m^\circ \dots w_2w_1 = x^{\bullet} \revrow(T)\ea\] by \eqref{simdec-2}.

Suppose $m = n$ and $x \leq w_m$. Then  
\[ x^{\bullet} \toQQ T = \ytabb{ w_1 & w_2 & \dots & w_m & x^\bullet}\] by Definition~\ref{decomp-insert-algo} (notice the position of middle element has changed), so 
\[ \ba
\revrow(x^{\bullet} \toQQ T) &= x^\bullet w_m\dots w_2w_1 \\&\simdec x^\bullet w_m^\circ \dots w_2w_1 = x^{\bullet} \revrow(T)\ea\] by \eqref{simdec-1}.

From this point on we assume $m<n$ and $x \leq w_n$. When applying the algorithm in Definition~\ref{decomp-insert-algo} to $x^{\bullet} \toQQ T$, the middle element of the first row of $T$ moves to the right only when either (i) $x \leq w_m$ or (ii) $x$ bumps some $w_j$ in the increasing part, and this $w_j$ bumps $w_m^\circ$ to the next row. 

Suppose $x \leq w_m$. Then in the insertion process defining $x^{\bullet} \toQQ T$, the number $x$ bumps $w_{m+1}$ and $w_{m+1}$ bumps the leftmost entry $w_j$ with $w_j < w_{m+1}$ to the next row, where $j \in [m]$. The middle element moves to the right for any $j \in [m]$. Below, we examine the three possible subcases that can arise:
\bei
\item[(1)] First, if $2 \leq j<m$, then we have $w_{j-1} \geq w_{m+1} > w_{j}$ and
\[ x^{\bullet} \toQQ T = \ytabd{ \none & w_{j}^\circ \\ w_1 & \dots &w_{j-1} & w_{m+1} & w_{j+1} & \dots & w_m & x^\bullet & w_{m+2} & \dots & w_n},\] 
and one can check that
{\begin{align*}
x^{\bullet} \revrow(T) & = x^\bullet w_n w_{n-1} \dots w_m^\circ \dots w_2w_1 \\
& \simdec w_n w_{n-1} \dots w_{m+2} x^\bullet w_{m+1} w_m^\circ w_{m-1} \dots w_2 w_1 
\\
& \simdec w_n w_{n-1} \dots w_{m+2} x^{\bullet} w_m w_{m-1} \dots w_{j+1}w_{m+1} w_{j-1} \dots w_1 w_j^\circ 
\\
& = \revrow(x^{\bullet} \toQQ T),
\end{align*}}%
using Lemma~\ref{simdec-incr} and Lemma~\ref{simdec-decr}(iv) for two equivalences.

\item[(2)] If $j=1$ then
\[ x^{\bullet} \toQQ T = \ytabd{ \none & w_{1}^\circ \\ w_{m+1}& w_2  & \dots  & w_m & x^\bullet & w_{m+2} & \dots & w_n},\] 
and one can check that 
\begin{align*}
x^{\bullet} \revrow(T) & = x^\bullet w_n w_{n-1} \dots w_m^\circ \dots w_2w_1 \\
& \simdec w_n w_{n-1} \dots w_{m+2} x^\bullet w_{m+1} w_m^\circ w_{m-1} \dots w_2 w_1 
\\
& \simdec w_n w_{n-1} \dots w_{m+2} x^{\bullet} w_m w_{m-1}  \dots w_2 w_{m+1} w_1^\circ 
\\
& = \revrow(x^{\bullet} \toQQ T),
\end{align*}
using Lemma~\ref{simdec-incr} and Lemma~\ref{simdec-decr}(ii) for the two equivalences.

\item[(3)] Finally, if $j = m$ then
\[ x^{\bullet} \toQQ T = \ytabd{ \none & w_{m}^\circ \\ w_1 & \dots  & w_{m-1} & w_{m+1} & x^\bullet & w_{m+2} & \dots & w_n},\] 
and one can check that
\begin{align*}
x^{\bullet} \revrow(T) & = x^\bullet w_n w_{n-1} \dots w_m^\circ \dots w_2w_1 \\
& \simdec w_n w_{n-1} \dots w_{m+2} x^\bullet w_{m+1} w_m^\circ w_{m-1} \dots w_2 w_1 
\\
& \simdec w_n w_{n-1} \dots w_{m+2} x^{\bullet}w_{m+1} w_{m-1}w_{m-2}  \dots w_1 w_m^\circ 
\\
& = \revrow(x^{\bullet} \toQQ T)
\end{align*}
using Lemma~\ref{simdec-incr} and Lemma~\ref{simdec-decr}(v) for the two equivalences.
\eei

Now suppose $ w_{i-1} <x \leq w_{i}$ for some $i \in [m+1, n]$. Then  in the insertion process defining $x^{\bullet} \toQQ T$, the number $x$ bumps $w_i$ and $w_i$ bumps the leftmost entry $w_j$ with $w_j < w_{i}$ to the next row, where $j \in [m]$. We now have three more subcases according to whether $2 \leq j < m$, $j=1$, or $j=m$:
\bei
\item[(4)] If $2 \leq j < m$ then the position of the middle element is unchanged, and 
\[ x^{\bullet} \toQQ T = \ytabd{ \none & w_{j}^\bullet \\ w_1 & \dots &w_{j-1} & w_{i} & w_{j+1} & \dots & w_m^\circ & w_{m+1} &  \dots &w_{i-1} & x & w_{i+1} & \dots & w_n}.\] 
In this event, we check that
{\small\begin{align*}
x^{\bullet} \revrow(T) & = x^\bullet w_n w_{n-1} \dots w_m^\circ \dots w_2w_1 \\
& \simdec w_n w_{n-1} \dots w_{i+1} x w_{i-1} \dots w_{m+1}^{\bullet} w_i w_m^\circ w_{m-1} \dots w_2 w_1 
\\
& \simdec w_n \dots w_{i+1} x w_{i-1} \dots w_{m+1}  w_m^\circ w_{m-1} \dots w_{j+1} w_i w_{j-1} \dots w_1 w_j^\bullet 
\\
& = \revrow(x^{\bullet} \toQQ T)
\end{align*}}%
using Lemma~\ref{simdec-incr} and Lemma~\ref{simdec-decr}(iii) for the two equivalences.

\item[(5)]
If $j = 1$ then again the position of the middle element in unchanged, and
\[ x^{\bullet} \toQQ T = \ytabd{ \none & w_{1}^\bullet \\ w_i & w_2 & \dots  & w_m^\circ & w_{m+1} &  \dots &w_{i-1} & x & w_{i+1} & \dots & w_n}.\] 
Now we check that
\begin{align*}
x^{\bullet} \revrow(T) & = x^\bullet w_n w_{n-1} \dots w_m^\circ \dots w_2w_1 \\
& \simdec w_n w_{n-1} \dots w_{i+1} x w_{i-1} \dots w_{m+1}^{\bullet} w_i w_m^\circ w_{m-1} \dots w_2 w_1 
\\
& \simdec w_n \dots w_{i+1} x w_{i-1} \dots w_{m+1}  w_m^\circ w_{m-1} \dots w_2 w_i w_1^\bullet 
\\
& = \revrow(x^{\bullet} \toQQ T)
\end{align*}
using Lemma~\ref{simdec-incr} and Lemma~\ref{simdec-decr}(i) for the two equivalences.

\item[(6)] Finally suppose $j = m$. Then $w_i$ replaces $w_m^\circ$ and since $w_i > w_{m+1}$, the position of the middle element changes to $w_{m+1}$. Therefore
\[ x^{\bullet} \toQQ T = \ytabd{ \none & w_{m}^\circ \\ w_1 & \dots  & w_{m+2} & w_i & w_{m+1}^\bullet & w_{m+2}&  \dots &w_{i-1} & x & w_{i+1} & \dots & w_n},\] 
and we check that
\begin{align*}
x^{\bullet} \revrow(T) & = x^\bullet w_n w_{n-1} \dots w_m^\circ \dots w_2w_1 \\
& \simdec w_n w_{n-1} \dots w_{i+1} x w_{i-1} \dots w_{m+1}^{\bullet} w_i w_m^\circ w_{m-1} \dots w_2 w_1 
\\
& \simdec w_n \dots w_{i+1} x w_{i-1} \dots w_{m+1}^\bullet w_i  w_{m-1} w_{m-2}\dots w_1 w_m^\circ 
\\
& = \revrow(x^{\bullet} \toQQ T)
\end{align*}
using Lemmas~\ref{simdec-incr} and \ref{simdec-decr}(v).
\eei
This completes our verification of the identity $\revrow(x^{\bullet} \toQQ T) \simdec x^{\bullet} \revrow(T) $ when $x \in \PP$ and $T$ is a one-row primed decomposition tableau.
\end{proof}

Our last theorem relates $\simdec$ and $\Pqq$ to the existence of $\qq_n$-isomorphisms.
\begin{theorem}
Suppose $v  $ and $w  $ are primed words with all letters at most $n$.
Let  $\cB$ and $\cC$ be the full subcrystals of $(\BB^+_n)^{\otimes \ell(v)}$ and  $(\BB^+_n)^{\otimes \ell(w)}$
that respectively contain $v$ and $w$.
Then the following properties are equivalent:
\ben
\item[(a)] It holds that $\Pqq(v) = \Pqq(w)$.
\item[(b)] It holds that $v\simdec w$.
\item[(c)] There exists a $\qq_n$-crystal isomorphism $\cB \to \cC$ sending $v\mapsto w$.
\een
\end{theorem}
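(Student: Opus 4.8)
My plan is to establish the cycle $(a)\Rightarrow(b)$, $(a)\Leftrightarrow(c)$, and $(b)\Rightarrow(a)$, which together yield the equivalence of all three conditions. The implication $(a)\Rightarrow(b)$ is immediate from Proposition~\ref{simdec-prop}: if $\Pqq(v)=\Pqq(w)$, then $v\simdec\revrow(\Pqq(v))=\revrow(\Pqq(w))\simdec w$, so $v\simdec w$.

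For $(a)\Leftrightarrow(c)$ I would lean on Theorem~\ref{main-thm}. Since all letters are at most $n$, the tableaux $\Pqq(v)$ and $\Pqq(w)$ lie in crystals $\DDTab_n(\lambda)$ and $\DDTab_n(\mu)$ for strict partitions $\lambda,\mu$ with at most $n$ parts, and $\Pqq$ restricts to $\qq_n$-crystal isomorphisms $\Pqq|_{\cB}\colon\cB\xrightarrow{\sim}\DDTab_n(\lambda)$ and $\Pqq|_{\cC}\colon\cC\xrightarrow{\sim}\DDTab_n(\mu)$ (here $\cB,\cC$ are connected and normal, being full subcrystals of a tensor power of the standard crystal). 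If $(a)$ holds then $\lambda=\mu$, and the composite $(\Pqq|_{\cC})^{-1}\circ\Pqq|_{\cB}\colon\cB\to\cC$ is a $\qq_n$-isomorphism sending $v\mapsto w$, giving $(c)$. Conversely, any isomorphism $\phi\colon\cB\to\cC$ with $\phi(v)=w$ forces $\lambda=\mu$, and then $\Pqq|_{\cC}\circ\phi$ and $\Pqq|_{\cB}$ are two isomorphisms $\cB\to\DDTab_n(\lambda)$; by the uniqueness in Corollary~\ref{atmost-cor} they coincide, whence $\Pqq(w)=\Pqq(\phi(v))=\Pqq(v)$, giving $(a)$.

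The main work is $(b)\Rightarrow(a)$. Write $v\sim_{\Pqq}w$ for the relation $\Pqq(v)=\Pqq(w)$. Because $\simdec$ is the smallest congruence containing the relations \eqref{simdec-1}--\eqref{simdec-10}, it suffices to prove that $\sim_{\Pqq}$ is itself a congruence containing all ten relations. For the congruence property I would use the equivalence $(a)\Leftrightarrow(c)$ just proved, which identifies $\sim_{\Pqq}$ with the relation ``there is a $\qq_n$-isomorphism of the containing full subcrystals matching the two words.'' Given $a\sim_{\Pqq}u$ and $b\sim_{\Pqq}v$, witnessed by isomorphisms $\phi_1\colon\cB_a\xrightarrow{\sim}\cB_u$ and $\phi_2\colon\cB_b\xrightarrow{\sim}\cB_v$ with $\phi_1(a)=u$ and $\phi_2(b)=v$, the tensor product $\phi_1\otimes\phi_2$ is again a $\qq_n$-crystal isomorphism, since the $\qq_n$-tensor operators depend only on the underlying crystal data. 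As $\cB_a$ and $\cB_b$ are full subcrystals, $\cB_a\otimes\cB_b$ is a subcrystal of $(\BB^+_n)^{\otimes(\ell(a)+\ell(b))}$ whose operators agree with the ambient ones, so its connected component of the word $ab=a\otimes b$ is exactly the full subcrystal $\cB_{ab}$ of the ambient tensor power. Restricting $\phi_1\otimes\phi_2$ to $\cB_{ab}$ therefore yields an isomorphism onto $\cB_{uv}$ carrying $ab\mapsto u\otimes v=uv$, so $ab\sim_{\Pqq}uv$ and $\sim_{\Pqq}$ is a congruence.

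It then remains to check that each relation \eqref{simdec-1}--\eqref{simdec-10} preserves $\Pqq$. By Lemma~\ref{unprime-lem3} we have $\unprime\circ\Pqq=\Pqq\circ\unprime$, and on unprimed words $\simdec$ restricts to shifted plactic equivalence, which preserves the reverse Kra\'skiewicz insertion tableau by \cite{Serrano,CNO}; hence for each relation the two sides already have the same underlying unprimed tableau, so that only the placement of primes on middle elements can differ. The remaining task—and the step I expect to be the main obstacle—is to confirm, directly from the insertion algorithm of Definition~\ref{decomp-insert-algo}, that the two sides of each relation prime the same rows. This is a finite but delicate case analysis: for the four-letter relations \eqref{simdec-3}--\eqref{simdec-10} one must track, for each inserted letter, whether it shifts the middle position of its row and which prime the bumped letter carries, since these are precisely the data governing where primes are created. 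Once these checks are complete, $\sim_{\Pqq}$ contains all ten relations, so $\simdec\subseteq\sim_{\Pqq}$, which is $(b)\Rightarrow(a)$.
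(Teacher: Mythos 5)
Your overall architecture coincides with the paper's: (a)$\Leftrightarrow$(c) via Theorem~\ref{main-thm} together with the uniqueness of isomorphisms between connected normal $\qq_n$-crystals, (a)$\Rightarrow$(b) via Proposition~\ref{simdec-prop}, and the remaining implication by reducing to the ten generating relations using a congruence/tensor-product observation. (The paper closes the cycle as (b)$\Rightarrow$(c) rather than (b)$\Rightarrow$(a), but since (a)$\Leftrightarrow$(c) this is immaterial; your argument that $\sim_{\Pqq}$ is a congruence is exactly the paper's observation that $v\equiv w$ implies $avb\equiv awb$ via $\id\otimes\phi\otimes\id$.)

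The one genuine gap is that you never actually verify that the two sides of each relation \eqref{simdec-1}--\eqref{simdec-10} have the same insertion tableau; you explicitly defer this as ``the main obstacle.'' That verification is the entire substantive content of the implication (b)$\Rightarrow$(a) --- everything else in that direction is formal --- so the proof is incomplete as written. The paper handles it as a direct finite computation of $\Pqq$ on both sides of each relation (noting it suffices to take $\{a,b,c,d\}\subseteq\{1,2,3,4\}$, so all prime choices give finitely many cases), exhibiting sample computations such as $\Pqq\(a^\bullet b d c^\circ\)=\Pqq\(a^\bullet d b^\circ c\)$. Your proposed shortcut --- first matching the underlying unprimed tableaux by citing the shifted plactic theory of \cite{CNO,Serrano} and then checking only prime placement --- is plausible but carries an extra hazard the paper avoids: the paper's hook-word convention is opposite to that of those references (as noted after the definition of decomposition tableaux), so importing their statement that plactic-equivalent words share an insertion tableau requires translating through the bijection of \cite{CNO} rather than quoting it verbatim. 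Either way, the case analysis tracking which boxes receive primes must still be carried out before the argument is complete.
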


\begin{proof}
We first check that (a) and (c) are equivalent.
Suppose $\Pqq(v) = \Pqq(w)$ and this shifted tableau has shape $\lambda$.
By Theorem~\ref{main-thm}, the operation $\Pqq$ defines $\qq_n$-crystal isomorphisms $\cB \to  \DDTab_n(\lambda)$ and $\cC \to  \DDTab_n(\lambda)$
sending $v$ and $w$ to the same element.
Composing the first isomorphism with the inverse of the second is a $\qq_n$-crystal isomorphism $\cB \to \cC$ sending $v\mapsto w$.
Therefore property (a) implies (c).

Conversely, suppose (c) holds so that there exists a $\qq_n$-crystal isomorphism $\phi : \cB \to \cC$ with $\phi(v) = w$.
 Because $\cB$ is a connected normal $\qq_n$-crystal,
 the results in Section~\ref{normal-sect} show that
 there is a unique strict partition $\lambda \in \NN^n$ with an isomorphism $\cB \to \DDTab_n(\lambda)$, which is also unique.
By Theorem~\ref{main-thm} the isomorphism $\cB \to \DDTab_n(\lambda)$ is just $\Pqq$ restricted to $\cB$.
 Since $\Pqq \circ  \phi$ is another crystal isomorphism from $\cB$
 to a primed decomposition tableau crystal $\DDTab_n(\mu)$ of some shape $\mu$,
 we must have $\lambda=\mu $ and $\Pqq(u) = \Pqq \circ \phi(u)$ for all $u \in \cB$.
In particular $\Pqq(v) = \Pqq(\phi(v)) = \Pqq(w)$.

To finish the proof of the theorem, it is now enough to show that property (a) implies (b) and property (b) implies (c).
The first implication is straightforward since if $\Pqq(v) = \Pqq(w)$
then we have $v\simdec \revrow(\Pqq(v)) = \revrow(\Pqq(w)) \simdec w$ by Proposition~\ref{simdec-prop}.
To discuss the second implication, 
we introduce some extra notation:
write $ v\equiv w$ if there exists a $\qq_n$-crystal isomorphism $\cB \to \cC$ sending $v\mapsto w$
as in property (c).
 
The following is an important observation regarding this notation.
Suppose $a$ and $b$ are primed words 
with all letters in $\{ 1'<1<\dots<n'<n\}$. Then $v\equiv w$
implies $avb \equiv awb$, since if $\phi :\cB \to \cC$ is an isomorphism sending $v \mapsto w$,
then
$\id_{(\BB_n^+)^{\ell(a)}} \otimes  \phi \otimes \id_{(\BB_n^+)^{\ell(b)}}$
restricts to the relevant isomorphism sending $avb\mapsto awb$.

We wish to show that if $v\simdec w$ then $v\equiv w$.
In view of the preceding paragraph,
it suffices to show that $v\equiv w$ in just the cases when $v$ and $w$ are the 2- or 4-letter primed words appearing in the relations 
in Definition~\ref{simdec}.
Since we already know that $v\equiv w$ if and only if $\Pqq(v) = \Pqq(w)$, 
we just need to check that 
the ten pairs of primed words in \eqref{simdec-1}-\eqref{simdec-10} have the same output under decomposition insertion.
This is a finite calculation, since it is sufficient to consider the cases when $\{a,b,c,d\}\subseteq \{1,2,3,4\}$.
For example, we have 
\[ \Pqq\(  a^\bullet b\) =  a^\bullet \toQQ b \toQQ \emptyset = \ytab{b & a^\bullet} = a^\bullet \toQQ b' \toQQ \emptyset = \Pqq\(a^\bullet b'\)
\]
when $a\leq b$, along with 
$  \Pqq\(  ba^\bullet \)  = \Pqq\(b'a^\bullet \) =  \ytab{a^\bullet & b}
$
when $a<b$, and 
\[\ba \Pqq\( a^\bullet bdc^\circ\) &= 
&a^\bullet \toQQ b \toQQ d \toQQ c^\circ \toQQ \emptyset 
\\&=
&a^\bullet \toQQ b \toQQ d \toQQ \ytab{c^\circ} 
\\&=
&a^\bullet \toQQ b \toQQ \ytab{c^\circ & d}
\\&=
&a^\bullet  \toQQ \ytab{\none & c^\circ \\ d & b}
\\
&=& \ytab{\none & c^\circ \\ d & b & a^\bullet}
\\
&=
&a^\bullet   \toQQ \ytab{c & b^\circ & d} 
\\
&=
&a^\bullet \toQQ d  \toQQ \ytab{c & b^\circ} 
\\
&=
&a^\bullet \toQQ d \toQQ b^\circ \toQQ \ytab{c} 
&\\
&=
&a^\bullet \toQQ d \toQQ b^\circ \toQQ c \toQQ \emptyset 
& & = \Pqq\( a^\bullet d b^\circ c\)
\ea
\]
for unprimed numbers $a\leq b \leq c <d$.
Similar calculations verify that $\Pqq(v) = \Pqq(w)$ when $v\simdec w$ are the 4-letter primed words in cases \eqref{simdec-4}-\eqref{simdec-10}.
This confirms that $v\equiv w$ when $v\simdec w$, as desired.
\end{proof}

Define $\DTab   := \bigsqcup_\lambda \DTab (\lambda)$ and $\DDTab   := \bigsqcup_\lambda \DDTab (\lambda)$ where both disjoint unions run over all strict partitions $\lambda$

\begin{corollary}
Both $\DTab $ and $\DDTab $ are monoids for the product \[T \ast U := \Pqq(\revrow(T) \revrow(U)).\]
\end{corollary}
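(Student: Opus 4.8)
The plan is to verify the three monoid axioms—closure, existence of a two-sided identity, and associativity—treating $\DDTab$ in detail, since $\DTab$ will follow by the same argument restricted to unprimed words. Closure is immediate: for $T,U \in \DDTab$ the concatenation $\revrow(T)\revrow(U)$ is a primed word, and $\Pqq$ always outputs a primed decomposition tableau by Definition~\ref{decomp-insert-algo} (as confirmed by Proposition~\ref{qq-bij-thm}), so $T\ast U \in \DDTab$ and the product is well defined. For the identity I would take the empty tableau $\emptyset$: since $\revrow(\emptyset)$ is the empty word, we get $T \ast \emptyset = \Pqq(\revrow(T)) = T$ and likewise $\emptyset \ast T = T$, where both equalities invoke Corollary~\ref{P-revrow-cor} (whose proof does not depend on $n$, so it applies to each $T$ in each $\DDTab(\lambda)$).

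The substance is associativity, and the key idea is that $\Pqq$ factors through the congruence $\simdec$. First I would record that for every word $w$, Proposition~\ref{simdec-prop} gives $\revrow(\Pqq(w)) \simdec w$. Applying this with $w = \revrow(T)\revrow(U)$ and using that $\simdec$ is a congruence (so it may be multiplied on the right by $\revrow(V)$) yields
\[
\revrow(T\ast U)\,\revrow(V) = \revrow\bigl(\Pqq(\revrow(T)\revrow(U))\bigr)\,\revrow(V) \simdec \revrow(T)\revrow(U)\revrow(V).
\]
By the equivalence of conditions (a) and (b) in the previous theorem, $\Pqq$ is constant on $\simdec$-classes (for any fixed triple we may take $n$ at least the largest entry occurring, to which the theorem applies), so applying $\Pqq$ to both sides gives $(T\ast U)\ast V = \Pqq(\revrow(T)\revrow(U)\revrow(V))$. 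The symmetric computation—multiplying the congruence $\revrow(\Pqq(\revrow(U)\revrow(V))) \simdec \revrow(U)\revrow(V)$ on the left by $\revrow(T)$—gives $T\ast(U\ast V) = \Pqq(\revrow(T)\revrow(U)\revrow(V))$ as well. Hence both iterated products equal the same tableau $\Pqq(\revrow(T)\revrow(U)\revrow(V))$, which proves associativity.

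For $\DTab$ the identical three steps apply after observing that $\revrow$ sends an unprimed decomposition tableau to an unprimed word, that $\Pqq$ preserves the unprimed world (so closure and the identity element stay inside $\DTab$), and that Proposition~\ref{simdec-prop} together with the previous theorem restrict to the unprimed case, where $\simdec$ is ordinary shifted plactic equivalence. I do not anticipate a genuine obstacle: all the nontrivial work has already been carried out in showing that $\Pqq$ is invariant under $\simdec$ (the previous theorem) and that $\revrow \circ \Pqq$ lands in the $\simdec$-class of its input (Proposition~\ref{simdec-prop}). The only point requiring a little care is to invoke the congruence property of $\simdec$ in the correct one-sided form at each step, so that the triple product collapses to $\Pqq(\revrow(T)\revrow(U)\revrow(V))$ independently of how the factors are associated.
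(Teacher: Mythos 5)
Your proof is correct and follows essentially the same route as the paper: associativity comes from $\revrow(\Pqq(\revrow(T)\revrow(U))) \simdec \revrow(T)\revrow(U)$ (Proposition~\ref{simdec-prop}), the congruence property of $\simdec$, and the fact that $\Pqq$ is constant on $\simdec$-classes, while the empty tableau is the unit by Corollary~\ref{P-revrow-cor}. Your version simply spells out the closure and identity axioms and the choice of $n$ more explicitly than the paper does.
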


\begin{proof}
As we have $\revrow(\Pqq(\revrow(T)\revrow(U))) \simdec \revrow(T) \revrow(U)$ it follows that  
$(T\bullet U) \bullet V = \Pqq(\revrow(T) \revrow(U) \revrow(V)) =T\bullet (U\bullet V)$.
Corollary~\ref{P-revrow-cor} implies that the empty tableau serves as the unit element.
\end{proof}

%
%
%
%
%
%
%
%

\printbibliography

\end{document}